\documentclass[a4paper,16pt]{article}
\usepackage[utf8]{inputenc}
\usepackage{amssymb,amsmath,mathrsfs}
\usepackage{amsthm}
\usepackage[colorlinks,
            linkcolor=blue,
            anchorcolor=red,
            citecolor=green
            ]{hyperref}
\numberwithin{equation}{section}
\usepackage{graphicx}
\usepackage{amsmath}
\newtheorem{theorem}{Theorem}[section]

\theoremstyle{plain}
\newtheorem{thm}[theorem]{Theorem } 
\newtheorem{defi}[theorem]{Definition }
\newtheorem{prop}[theorem]{Proposition }
\newtheorem{cor}[theorem]{Corollary }

\newtheorem{ques}[theorem]{Question}
\newtheorem{lem}[theorem]{Lemma }

\newtheorem{rem}[theorem] {Remark}

\setlength{\textheight}{200mm} \setlength{\textwidth}{160mm}
\setlength{\oddsidemargin}{5mm} \setlength{\evensidemargin}{5mm}
\usepackage{tikz}
\usetikzlibrary{matrix}

\begin{document}

\title{The matrix  Szeg\H{o} equation}
\author{Ruoci Sun\footnote{School of Mathematics, Georgia Institute of Technology, Atlanta, USA. Email: ruoci.sun.16@normalesup.org}}

\maketitle

\noindent $\mathbf{Abstract}$ \quad This paper is dedicated to studying the matrix solutions to the cubic Szeg\H{o} equation, introduced in G\'erard--Grellier \cite{GGANNENS},  leading to the cubic matrix Szeg\H{o} equation on the torus,
\begin{equation*}
 i \partial_t U = \Pi_{\geq 0} \left(U  U ^* U \right), \quad \Pi_{\geq 0}: \sum_{n\in \mathbb{Z}}\hat{U}(n) e^{inx} \mapsto \sum_{n \geq 0} \hat{U}(n) e^{inx}.
 \end{equation*}This equation enjoys a two-Lax-pair structure, which allow every solution to be expressed explicitly in terms of the initial data $U(0)$ and the time $t \in \mathbb{R}$.\\

\noindent $\mathbf{Keywords}$ \quad Szeg\H{o} operator, Lax pair, explicit formula, Hankel operators, Toeplitz operators.\\

\tableofcontents
\section{Introduction}
\noindent Given two arbitrary positive integers $M,N \in \mathbb{N}_+$, the cubic $M \times N$ matrix Szeg\H{o} equation on the torus $\mathbb{T}= \mathbb{R}\slash 2\pi \mathbb{Z}$ reads as
\begin{equation}\label{MSzego}
 i \partial_t U = \Pi_{\geq 0} \left(U  U ^* U \right), \quad U=U(t,x) \in \mathbb{C}^{M \times N}, \quad (t,x)\in \mathbb{R} \times \mathbb{T},
 \end{equation}where $\Pi_{\geq 0}: L^2(\mathbb{T}; \mathbb{C}^{M \times N}) \to L^2(\mathbb{T}; \mathbb{C}^{M \times N})$ denotes Szeg\H{o} operator which cancels all negative Fourier modes and preserves the nonnegative Fourier modes, i.e.
\begin{equation}\label{MSzegoop}
 \left(\Pi_{\geq 0}  U\right)(x) = \sum_{n\geq 0}\hat{U}(n) e^{inx}, \quad \hat{U}(n) = \tfrac{1}{2\pi}\int_{0}^{2\pi} U(x)e^{-inx} \mathrm{d}x \in \mathbb{C}^{M \times N},  \quad \forall n \in \mathbb{Z},
\end{equation}for any $U  =\sum_{n\in \mathbb{Z}}\hat{U}(n) e^{inx}\in  L^2(\mathbb{T}; \mathbb{C}^{M \times N})$.
 
\subsection{Motivation}
\noindent The motivation to introduce equation \eqref{MSzego} is based on the following two facts. On the one hand, the cubic scalar Szeg\H{o} equation on the torus $\mathbb{T}$,
\begin{equation}\label{sSzego}
i\partial_t u = \Pi_{\geq 0}(|u|^2 u), \quad u=u(t,x) \in \mathbb{C}, \quad (t,x) \in \mathbb{R} \times \mathbb{T}, \quad \Pi_{\geq 0}: \sum_{n\in \mathbb{Z}}a_n e^{inx} \mapsto \sum_{n \geq 0}a_n e^{inx}, 
\end{equation}which is introduced in G\'erard--Grellier \cite{GGANNENS, GGinvent, GGAPDE,  GGTurb2015, GGTAMS, GerardGrellierBook} and  G\'erard--Pushnitski \cite{GerPush2023}, is a model of a nondispersive Hamiltonian equation. It enjoys a two-Lax-pair structure, which allows to establish action--angle coordinates on the finite rank manifolds and the explicit formula for an arbitrary $L^2$-solution, leading to the complete integrability of \eqref{sSzego}.  Thanks to its integrable system structure, P. G\'erard and S. Grellier also construct  weakly turbulent solutions, obtain  the quasi-periodicity of rational solutions and the classification of stationary and traveling waves. The cubic scalar Szeg\H{o} equation on the line $\mathbb{R}$ is studied in the works Pocovnicu \cite{poAPDE2011, poDCDS2011} and  G\'erard--Pushnitski \cite{GerPush2023}.\\

\noindent  On the other hand, if we consider the matrix generalization of the following integrable systems, the corresponding matrix equation still enjoys the Lax pair structure. Given any $d \in \mathbb{N}_+$, the filtered Sobolev spaces are given by $H^s_+(\mathbb{T}; \mathbb{C}^{N \times d}):=\Pi_{\geq 0}(H^s(\mathbb{T}; \mathbb{C}^{N \times d}))$, $\forall s \geq 0$. The right Toeplitz operator of symbol $V \in L^2(\mathbb{T}; \mathbb{C}^{M \times N})$ is defined by 
\begin{equation}\label{Rtoep}
\begin{split}
& \mathbf{T}^{(\mathbf{r})}_{V} : G \in H^1_+(\mathbb{T}; \mathbb{C}^{N \times d}) \mapsto \mathbf{T}^{(\mathbf{r})}_{V}(G) = \Pi_{\geq 0}(V G) \in L^2_+(\mathbb{T}; \mathbb{C}^{M \times d}).\\
\end {split}
\end{equation}
\begin{enumerate}
    \item The matrix cubic intertwined derivative Schr\"odinger system of Calogero--Moser--Sutherland type on $\mathbb{T}$, which is introduced in Sun \cite{SUNInterCMSDNLS}
\begin{equation}\label{IdnlsCMS}
\begin{cases}
\partial_t U = i \partial_x^2 U +   U  \partial_x \Pi_{\geq 0}\left(V^* U \right) +  V \partial_x \Pi_{\geq 0}\left(U^* U \right),\\
 \partial_t V = i \partial_x^2 V  +  V \partial_x \Pi_{\geq 0}\left(U^* V \right) +   U  \partial_x \Pi_{\geq 0}\left(V^* V \right),
\end {cases}  \quad  (t,x) \in \mathbb{R} \times \mathbb{T}, 
\end{equation}where $U=U(t), V=V(t) \in H^2_+ (\mathbb{T}; \mathbb{C}^{M \times N})$, enjoys the following Lax pair structure 
\begin{equation*} 
\begin{split}
\mathbf{L}_{(U,V)}^{\mathrm{dNLS}}  = &  \mathrm{D} - \tfrac{1}{2}\left(\mathbf{T}_{U }^{(\mathbf{r})} \mathbf{T}_{V^*}^{(\mathbf{r})} + \mathbf{T}_{V}^{(\mathbf{r})} \mathbf{T}_{U^*}^{(\mathbf{r})}\right), \quad \mathrm{D}=-i \partial_x, \quad \forall U, V \in H^1_+ (\mathbb{T}; \mathbb{C}^{M \times N}), \\ 
\mathbf{B}_{(U,V)}^{\mathrm{dNLS}}   = & \tfrac{1}{2}\left(\mathbf{T}_{U}^{(\mathbf{r})} \mathbf{T}_{\partial_x V^*}^{(\mathbf{r})}  + \mathbf{T}_{V}^{(\mathbf{r})} \mathbf{T}_{\partial_x U^*}^{(\mathbf{r})}   - \mathbf{T}_{\partial_x V}^{(\mathbf{r})} \mathbf{T}_{U^*}^{(\mathbf{r})}  - \mathbf{T}_{\partial_x U}^{(\mathbf{r})} \mathbf{T}_{V^*}^{(\mathbf{r})} \right) +  \tfrac{i}{4}\left(\mathbf{T}_{U}^{(\mathbf{r})} \mathbf{T}_{V^*}^{(\mathbf{r})}  + \mathbf{T}_{V}^{(\mathbf{r})} \mathbf{T}_{U^*}^{(\mathbf{r})}  \right)^2.  
\end{split}
\end{equation*}where $\mathbf{L}_{(U,V)}^{\mathrm{dNLS}},\mathbf{B}_{(U,V)}^{\mathrm{dNLS}}: H^1_+(\mathbb{T}; \mathbb{C}^{M \times d}) \to  L^2_+(\mathbb{T}; \mathbb{C}^{M \times d})$ are densely defined operators. The scalar version of equation \eqref{IdnlsCMS} is a generalization and integrable perturbation of both the linear Schr\"odinger equation and the defocusing$\slash$focusing Calogero--Moser--Sutherland dNLS equation, introduced in Badreddine \cite{Badreddine2023} and G\'erard--Lenzmann \cite{Gerard-Lenzmann2022}.

    \item The spin Benjamin--Ono (sBO) equation on $\mathbb{T}$, introduced in Berntson--Langmann--Lenells \cite{sBOBLL2022},
\begin{equation}\label{SBO1}
\partial_t U =   \partial_x \left(|\mathrm{D}| U -  U^2\right) - i [U, |\mathrm{D}| U], \quad U=U(t,x) \in \mathbb{C}^{N \times N}, \quad (t,x) \in \mathbb{R}\times \mathbb{T}, \quad \mathrm{D}=-i \partial_x,
\end{equation}enjoys the following Lax pair structure,  
\begin{equation*}
\mathbf{L}_U^{\mathrm{sBO}}  := \mathrm{D}-\mathbf{T}_U^{(\mathbf{r})}, \quad \mathbf{B}_U^{\mathrm{sBO}}  :=  i \mathbf{T}_{|\mathrm{D}|U}^{(\mathbf{r})} - i\left( \mathbf{T}_{U}^{(\mathbf{r})} \right)^2 , \quad \forall U \in C^{\infty}(\mathbb{T}; \mathbb{C}^{N \times N}).
\end{equation*}thanks to P. G\'erard's work \cite{sBOLaxP2022}.
    \item The matrix KdV equation on $\mathbb{T}$
\begin{equation}\label{NKdV}
\partial_t U = 3 \partial_x (U^2) - \partial_x^3 U, \quad U=U(t,x) \in \mathbb{C}^{N \times N}, \quad (t,x) \in \mathbb{R} \times \mathbb{T}, 
\end{equation}enjoys the following Lax pair structure, thanks to the work Lax \cite{LaxPairCPAMKdV},
\begin{equation*}
\mathbf{L}_U^{\mathrm{KdV}}= U - \partial_x^2, \quad \mathbf{B}_U^{\mathrm{KdV}}=  - 4\partial_x^3 + 6U \partial_x + 3 (\partial_x U), \quad \forall U \in C^{\infty}(\mathbb{T}; \mathbb{C}^{N \times N}).
\end{equation*}
 
    \item The matrix cubic Schr\"odinger system on $\mathbb{T}$
\begin{equation}\label{MNLS}
\begin{cases}
i \partial_t U = -\partial_x^2 U + 2 UV^*U,\\
i \partial_t V =  -\partial_x^2 V + 2 VU^*V,\\
\end{cases}\quad U=U(t,x), \; V=V(t,x)\in \mathbb{C}^{M\times N},  \quad (t,x) \in \mathbb{R} \times \mathbb{T}, 
\end{equation}enjoys the following Lax pair structure, thanks to the work Zakharov--Shabat \cite{ZS1972}, 
\begin{equation*}
\mathbf{L}_{\left(  U, V\right)}^{\mathrm{NLS}}= \begin{pmatrix}
i \partial_x & U\\
V^* & -i \partial_x 
\end{pmatrix} , \; \mathbf{B}_{\left(  U, V\right)}^{\mathrm{NLS}}=\begin{pmatrix}
2 i \partial_x^2 -iUV^* & \partial_x U + 2U\partial_x\\
\partial_x V^* + 2V^*\partial_x & -2i \partial_x^2 +iV^*U  
\end{pmatrix}, \; \forall U, V \in C^{\infty}(\mathbb{T}; \mathbb{C}^{M \times N}).
\end{equation*}
\end{enumerate}
In previous examples, if the scalar multiplication is replaced by the right multiplication of matrices, then the Lax pair of the original scalar equation becomes the Lax pair of the corresponding matrix equation. It leads automatically to the following question.

\begin{ques}\label{ScamultoMaMul}
If we substitute the right multiplication of matrices for the scalar multiplication in the Lax pair when doing the matrix generalization for an \textbf{arbitrary} integrable equation, will this operation always give a 'Lax pair' for the corresponding matrix equation?
\end{ques}

\noindent The answer is \textbf{False} due to the matrix generalization of the cubic Szeg\H{o} equation from \eqref{sSzego} to \eqref{MSzego}. Recall that the scalar Hankel operator of symbol $u \in H^{\frac{1}{2}}_+(\mathbb{T}; \mathbb{C})$ is given by
\begin{equation}\label{Hankelsca}
H_u : f \in L^2_+(\mathbb{T}; \mathbb{C}) \mapsto H_u(f):=\Pi_{\geq 0}(u \bar{f}) \in L^2_+(\mathbb{T}; \mathbb{C})
\end{equation}The Hankel operator $H_u$ is a $\mathbb{C}$-antilinear Hilbert-Schmidt operator on $L^2_+(\mathbb{T}; \mathbb{C})$. The scalar Toeplitz operator of symbol $b \in L^{\infty} (\mathbb{T}; \mathbb{C})$, which is given by 
\begin{equation}\label{Toeplitzsca}
T_b : f \in L^2_+(\mathbb{T}; \mathbb{C}) \mapsto T_b(f) =\Pi_{\geq 0}(bf) \in L^2_+(\mathbb{T}; \mathbb{C}),
\end{equation}is a bounded $\mathbb{C}$-linear operator on $L^2_+(\mathbb{T}; \mathbb{C})$. If $u \in H^s_+(\mathbb{T}; \mathbb{C})$ for some $s> \frac{1}{2}$, set $B_u = \tfrac{i}{2}H_u^2 - i T_{|u|^2}$. According to G\'erard--Grellier \cite{GGANNENS}, $(H_u, B_u)$ is a Lax pair of \eqref{sSzego}, i.e. the function $u \in C^{\infty}(\mathbb{R}; H^s_+(\mathbb{T}; \mathbb{C}))$ solves \eqref{sSzego} if and only if 
\begin{equation}\label{LaxPScaSzego}
\tfrac{\mathrm{d}}{\mathrm{d}t} H_{u(t)} = [B_{u(t)}, H_{u(t)}].
\end{equation}When generalizing to the matrix Szeg\H{o} equation, the Hankel operator has two  different matrix generalizations. Given $d, M,N\in\mathbb{N}_+$, the left and right Hankel operators of symbol $U \in H^{\frac{1}{2}}_+(\mathbb{T}; \mathbb{C}^{M\times N})$ are defined by
\begin{equation}\label{rlHankelIntro}
\begin{split}
& \mathbf{H}^{(\mathbf{r})}_{U} : F \in L^2_+(\mathbb{T}; \mathbb{C}^{d \times N}) \mapsto \mathbf{H}^{(\mathbf{r})}_{U}(F) = \Pi_{\geq 0}(U F^*) \in L^2_+(\mathbb{T}; \mathbb{C}^{M \times d});\\
 & \mathbf{H}^{(\mathbf{l})}_{U} : G \in L^2_+(\mathbb{T}; \mathbb{C}^{M \times d}) \mapsto \mathbf{H}^{(\mathbf{l})}_{U}(G) = \Pi_{\geq 0}(G^* U) \in L^2_+(\mathbb{T}; \mathbb{C}^{d \times N}).\\
\end {split}
\end{equation}Assume that $M \ne N$, then $L^2_+(\mathbb{T}; \mathbb{C}^{d \times N}) \bigcap L^2_+(\mathbb{T}; \mathbb{C}^{M \times d}) = \emptyset$. So it is impossible to find $d \in \mathbb{N}_+$ and an operator $\mathbf{B}_U$ such that the Lie bracket $[\mathbf{B}_U, \mathbf{H}^{(\mathbf{r})}_{U} ] =\mathbf{B}_U \mathbf{H}^{(\mathbf{r})}_{U} -  \mathbf{H}^{(\mathbf{r})}_{U}\mathbf{B}_U$ is a  well defined operator from $L^2_+(\mathbb{T}; \mathbb{C}^{d \times N})$ to $L^2_+(\mathbb{T}; \mathbb{C}^{M \times d})$, according to the rules of matrix addition and multiplication. The similar result holds for $\mathbf{H}^{(\mathbf{l})}_{U}$. Consequently, neither $\mathbf{H}^{(\mathbf{r})}_{U}$ nor $\mathbf{H}^{(\mathbf{l})}_{U}$ can be a candidate for the Lax operator of the matrix Szeg\H{o} equation \eqref{MSzego}, while the single scalar Hankel operator $H_u$ is a Lax operator of the scalar Szeg\H{o} equation \eqref{sSzego}.  Unlike the previous integrable systems  \eqref{IdnlsCMS}, \eqref{SBO1},  \eqref{NKdV},  \eqref{MNLS}, the matrix Szeg\H{o} equation \eqref{MSzego} provides one counter-example of conjecture $\ref{ScamultoMaMul}$.\\

\noindent When generalizing a scalar equation to its matrix equation, the transpose transform 
\begin{equation}\label{Transpose}
\mathfrak{T}=\mathfrak{T}^{-1}: A \in \mathbb{C}^{M \times N} \mapsto \mathfrak{T}(A) = A^{\mathrm{T}} \in \mathbb{C}^{N \times M} 
\end{equation}becomes nontrivial if $(M,N) \ne (1,1)$. The matrix Szeg\H{o} equation \eqref{MSzego} is invariant under transposing, i.e. if  $U \in C^{\infty}(\mathbb{R}; H^s_+(\mathbb{T};  \mathbb{C}^{M\times N}))$ solves \eqref{MSzego}, so does $U^{\mathrm{T}} \in C^{\infty}(\mathbb{R}; H^s_+(\mathbb{T};  \mathbb{C}^{N\times M}))$, $\forall s >\frac{1}{2}$. In addition, the left Hankel operator  $\mathbf{H}^{(\mathbf{l})}_{U}$ is conjugate to the right Hankel operator  $\mathbf{H}^{(\mathbf{r})}_{U^{\mathrm{T}}}$ via the transpose transform, i.e.
\begin{equation}\label{TconjHrHl}
\mathbf{H}^{(\mathbf{l})}_{U} = \mathfrak{T}\circ \mathbf{H}^{(\mathbf{r})}_{U^{\mathrm{T}}} \circ\mathfrak{T}, \quad \mathbf{H}^{(\mathbf{r})}_{U} = \mathfrak{T}\circ \mathbf{H}^{(\mathbf{l})}_{U^{\mathrm{T}}} \circ\mathfrak{T}.
\end{equation}Even though one single matrix Hankel operator fails to be a Lax operator of \eqref{MSzego}, the matrix Szeg\H{o} equation \eqref{MSzego} still enjoys a Lax pair structure, which is provided by the double matrix Hankel operators $\mathbf{H}^{(\mathbf{r})}_{U} \mathbf{H}^{(\mathbf{l})}_{U}$ and  $\mathbf{H}^{(\mathbf{l})}_{U} \mathbf{H}^{(\mathbf{r})}_{U}$. Before stating the main results, we give the high regularity global wellposedness result of \eqref{MSzego}.

\begin{prop}\label{GWPH0.5}
Given $U_0 \in H^{\frac{1}{2}}_+(\mathbb{T};  \mathbb{C}^{M\times N})$, there exists a unique function $U \in C(\mathbb{R}; H^{\frac{1}{2}}_+(\mathbb{T};  \mathbb{C}^{M\times N}))$ solving the cubic matrix Szeg\H{o} equation \eqref{MSzego} such that $U(0)=U_0$. For each $T>0$, the flow map $\Phi : U_0 \in H^{\frac{1}{2}}_+(\mathbb{T};  \mathbb{C}^{M\times N}) \mapsto U\in C([-T, T]; H^{\frac{1}{2}}_+(\mathbb{T};  \mathbb{C}^{M\times N}))$ is continuous. Moreover, if $U_0 \in H^s_+(\mathbb{T};  \mathbb{C}^{M\times N})$ for some $s > \frac{1}{2}$, then $U \in C^{\infty}(\mathbb{R}; H^s_+(\mathbb{T};  \mathbb{C}^{M\times N}))$.
\end{prop}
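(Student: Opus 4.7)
The plan is to adapt the strategy of \cite{GGANNENS} for the scalar Szeg\H{o} equation: local wellposedness at subcritical regularity by a Banach fixed point argument, conservation laws yielding an a priori $H^{\frac{1}{2}}$-bound, a Brezis--Gallouet type logarithmic inequality to rule out blow-up in $H^s$ for $s > \frac{1}{2}$, and a final approximation step to descend to the critical regularity $s = \frac{1}{2}$.

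First, for any $s > \frac{1}{2}$, $H^s(\mathbb{T})$ is a Banach algebra and $\Pi_{\geq 0}$ is bounded on $H^s$, hence the nonlinearity $U \mapsto \Pi_{\geq 0}(U U^* U)$ is locally Lipschitz on $H^s_+(\mathbb{T}; \mathbb{C}^{M \times N})$. A standard Duhamel-based contraction-mapping argument then produces a unique solution $U \in C([-T, T]; H^s_+)$ on an interval depending only on $\|U_0\|_{H^s}$, together with Lipschitz continuous dependence on the initial data in $H^s$. Next, for such a smooth solution a direct computation using the self-adjointness of $\Pi_{\geq 0}$, the identity $U = \Pi_{\geq 0} U$, and the cyclicity of the trace shows that
\[
M(U) = \int_{\mathbb{T}} \mathrm{tr}(U^* U)\,\mathrm{d}x, \qquad Q(U) = \int_{\mathbb{T}} \mathrm{tr}(U^* \mathrm{D} U)\,\mathrm{d}x, \qquad E(U) = \tfrac{1}{4}\int_{\mathbb{T}} \mathrm{tr}\bigl((U^* U)^2\bigr)\,\mathrm{d}x
\]
are conserved, because in each case the relevant pairing $\langle \Pi_{\geq 0}(UU^*U), \Phi \rangle_{L^2}$ reduces to the real integral of the trace of a self-adjoint matrix-valued polynomial in $U$ and $U^*$.

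Since $Q(U) + M(U) \simeq \|U\|_{H^{\frac{1}{2}}}^2$ on $L^2_+$, one obtains the uniform a priori bound $\|U(t)\|_{H^{\frac{1}{2}}} = \|U_0\|_{H^{\frac{1}{2}}}$ along the $H^s$-flow. Combining this with the logarithmic Brezis--Gallouet estimate
\[
\|U\|_{L^\infty}^2 \lesssim \|U\|_{H^{\frac{1}{2}}}^2 \Bigl(1 + \log\bigl(1 + \|U\|_{H^s}/\|U\|_{H^{\frac{1}{2}}}\bigr)\Bigr), \qquad s > \tfrac{1}{2},
\]
and the algebra estimate in $H^s$ gives $\tfrac{\mathrm{d}}{\mathrm{d}t}\|U\|_{H^s} \lesssim \|U\|_{H^s}\|U\|_{H^{\frac{1}{2}}}^2 \log(2 + \|U\|_{H^s})$. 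Gronwall then precludes finite-time blow-up, delivering global existence in $H^s$ and the persistence-of-regularity statement.

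For $U_0 \in H^{\frac{1}{2}}_+$ only, I would regularize by smooth approximants $U_0^{(n)} \to U_0$ in $H^{\frac{1}{2}}$ and consider the corresponding global $H^s$-solutions $U^{(n)}$, which are uniformly bounded in $L^\infty_t H^{\frac{1}{2}}_x$ by the conservation laws. A weak-$*$ compactness extraction and passage to the limit in the Duhamel formulation produce a solution $U \in L^\infty_{\mathrm{loc}}(\mathbb{R}; H^{\frac{1}{2}}_+)$ of \eqref{MSzego}; strong continuity in $H^{\frac{1}{2}}$ and the continuity of the flow map follow from an energy-type estimate on the difference $U^{(n)} - U^{(m)}$. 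The main obstacle is precisely this last step: $H^{\frac{1}{2}}$ is critical and no direct contraction argument is available, so closing the difference estimate requires a logarithmic loss controlled by the uniform $H^{\frac{1}{2}}$-bound together with a delicate Gronwall iteration, in the spirit of the scalar argument of G\'erard--Grellier.
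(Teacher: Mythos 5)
Your proposal follows the same general route as the paper, which contents itself with a pointer to section~2 of G\'erard--Grellier \cite{GGANNENS} and then supplies the matrix versions of the Brezis--Gallou\"et and Trudinger inequalities as the technical inputs. Your first three steps --- contraction in $H^s$ for $s>\tfrac{1}{2}$, the conservation of $\mathbf{q}$, $\mathfrak{j}$ and $\mathbf{E}$, and the Brezis--Gallou\"et/Gronwall argument to rule out $H^s$ blowup --- are correct and match the scalar template.

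Where your outline diverges and would run into trouble is the last step, at the critical regularity $s=\tfrac{1}{2}$. You propose to close a difference estimate directly in $H^{\frac{1}{2}}$ by a ``delicate Gronwall iteration with a logarithmic loss.'' But $H^{\frac{1}{2}}(\mathbb{T})$ is not an algebra and the $H^s$-contraction scheme genuinely breaks at this exponent; there is no logarithmic interpolation that repairs a difference estimate run in $H^{\frac{1}{2}}$ itself. The argument of \cite{GGANNENS} that carries over here is structurally different. One uses the matrix Trudinger inequality \eqref{TruMatrixIne} --- which your proposal never invokes, although the paper records it precisely for this purpose --- together with the uniform bound on $\sup_t\|U^{(n)}(t)\|_{H^{1/2}}$, to control the cubic term in $L^2_+$ (since $H^{1/2}\hookrightarrow L^p$ for every finite $p$, so $UU^*U\in L^2$). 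The Gronwall estimate on $U^{(n)}-U^{(m)}$ is run only at the level of $L^2_+$ and gives convergence in $C([-T,T];L^2_+)$. The upgrade to $H^{\frac{1}{2}}$ is then soft rather than quantitative: $U^{(n)}(t)$ is bounded in $H^{\frac{1}{2}}$ and hence converges weakly there; the exactly conserved quantity $\mathbf{q}+\mathfrak{j}=\sum_{n\geq 0}(1+n)\,\mathrm{tr}\bigl(\hat{U}(n)\hat{U}(n)^*\bigr)$ is the squared $H^{\frac{1}{2}}$-norm, so the norms converge; and weak convergence plus convergence of norms in a Hilbert space is strong convergence. Uniformity in $t$ on compacts, hence continuity of the flow map, follows by combining this upgrade with the $L^2$-uniform estimate.
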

 
\subsection{Main results}
\noindent Given $n \in \mathbb{Z}$, we set $\mathbf{e}_n : x \in \mathbb{T} \mapsto e^{inx} \in \mathbb{C}$. Given any positive integers $M,N\in\mathbb{N}_+$,  the shift operator $\mathbf{S}$ is defined by
\begin{equation}\label{Sdef}
\mathbf{S}: F \in L^2_+(\mathbb{T}; \mathbb{C}^{M\times N}) \mapsto \mathbf{e}_1 F \in L^2_+(\mathbb{T}; \mathbb{C}^{M\times N}).
\end{equation}Its $L^2_+$-adjoint is denoted by $\mathbf{S}^* \in \mathcal{B}\left(L^2_+(\mathbb{T}; \mathbb{C}^{M\times N})\right)$. If $F= \sum_{n \geq 0}\hat{F}(n)\mathbf{e}_n \in L^2_+(\mathbb{T}; \mathbb{C}^{M\times N})$, 
\begin{equation}\label{S*defIntro}
\mathbf{S}^* (F)= \Pi_{\geq 0}\left( \mathbf{e}_{-1} F\right) = \sum_{n \geq 0}\hat{F}(n+1)\mathbf{e}_n.
\end{equation}The left and right shifted Hankel operators of the symbol $U \in H^{\frac{1}{2}}_+(\mathbb{T}; \mathbb{C}^{M\times N})$ are defined by
\begin{equation}\label{rlShHanIntro}
\begin{split}
& \mathbf{K}^{(\mathbf{r})}_{U} = \mathbf{H}^{(\mathbf{r})}_{U}\mathbf{S} =   \mathbf{S}^* \mathbf{H}^{(\mathbf{r})}_{U} =\mathbf{H}^{(\mathbf{r})}_{\mathbf{S}^*  U}, \quad  \mathbf{K}^{(\mathbf{l})}_{U} = \mathbf{H}^{(\mathbf{l})}_{U}\mathbf{S} =   \mathbf{S}^* \mathbf{H}^{(\mathbf{l})}_{U} =\mathbf{H}^{(\mathbf{l})}_{\mathbf{S}^*  U}. 
\end{split}
\end{equation}The left Toeplitz operator of symbol $V \in L^2(\mathbb{T}; \mathbb{C}^{M \times N})$ is defined by 
\begin{equation}\label{Ltoep}
\begin{split}
\mathbf{T}^{(\mathbf{l})}_{V} : F \in H^1_+(\mathbb{T}; \mathbb{C}^{d \times M}) \mapsto \mathbf{T}^{(\mathbf{l})}_{V}(F) = \Pi_{\geq 0}(FV) \in L^2_+(\mathbb{T}; \mathbb{C}^{d \times N}), \quad \forall d \in \mathbb{N}_+. 
\end {split}
\end{equation}The first result of this paper gives the two-Lax-pair structure of the matrix Szeg\H{o} equation \eqref{MSzego}.
\begin{thm}\label{LaxPairThm}
Given $M,N,d \in \mathbb{N}_+$ and $s>\tfrac{1}{2}$, if $U \in C^{\infty}(\mathbb{R}; H^s_+(\mathbb{T};  \mathbb{C}^{M\times N}))$ solves the matrix Szeg\H{o} equation \eqref{MSzego}, then the time-dependent operators  $\mathbf{H}^{(\mathbf{r})}_{U}\mathbf{H}^{(\mathbf{l})}_{U}, \mathbf{K}^{(\mathbf{r})}_{U}\mathbf{K}^{(\mathbf{l})}_{U} \in C^{\infty}(\mathbb{R}; \mathcal{B}(L^2_+(\mathbb{T};  \mathbb{C}^{M\times d})))$ and  $\mathbf{H}^{(\mathbf{l})}_{U}\mathbf{H}^{(\mathbf{r})}_{U}, \mathbf{K}^{(\mathbf{l})}_{U}\mathbf{K}^{(\mathbf{r})}_{U} \in C^{\infty}(\mathbb{R}; \mathcal{B}(L^2_+(\mathbb{T};  \mathbb{C}^{d\times N})))$ satisfy the following identities:
\begin{equation}\label{4HeiLaxMSzego}
\begin{split}
& \tfrac{\mathrm{d}}{\mathrm{d}t}(\mathbf{H}^{(\mathbf{r})}_{U(t)}\mathbf{H}^{(\mathbf{l})}_{U(t)}) = i[\mathbf{H}^{(\mathbf{r})}_{U(t)}\mathbf{H}^{(\mathbf{l})}_{U(t)}, \; \mathbf{T}^{(\mathbf{r})}_{U(t) U(t)^*}]; \quad \tfrac{\mathrm{d}}{\mathrm{d}t}(\mathbf{H}^{(\mathbf{l})}_{U(t)}\mathbf{H}^{(\mathbf{r})}_{U(t)}) = i[\mathbf{H}^{(\mathbf{l})}_{U(t)}\mathbf{H}^{(\mathbf{r})}_{U(t)},  \; \mathbf{T}^{(\mathbf{l})}_{U(t)^* U(t)}]; \\
& \tfrac{\mathrm{d}}{\mathrm{d}t}(\mathbf{K}^{(\mathbf{r})}_{U(t)}\mathbf{K}^{(\mathbf{l})}_{U(t)}) = i[\mathbf{K}^{(\mathbf{r})}_{U(t)}\mathbf{K}^{(\mathbf{l})}_{U(t)}, \; \mathbf{T}^{(\mathbf{r})}_{U(t) U(t)^*}]; \quad \tfrac{\mathrm{d}}{\mathrm{d}t}(\mathbf{K}^{(\mathbf{l})}_{U(t)}\mathbf{K}^{(\mathbf{r})}_{U(t)}) = i[\mathbf{K}^{(\mathbf{l})}_{U(t)}\mathbf{K}^{(\mathbf{r})}_{U(t)},  \; \mathbf{T}^{(\mathbf{l})}_{U(t)^* U(t)}]. 
\end{split}
\end{equation}
\end{thm}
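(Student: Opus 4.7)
The plan is to reduce each of the four Heisenberg identities in \eqref{4HeiLaxMSzego} to a single pair of ``cubic factorization'' lemmas for matrix Hankel operators, after which the Lax relations follow by an essentially algebraic substitution that parallels the scalar G\'erard--Grellier computation for \eqref{sSzego}.

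First I would establish the key factorization: for every $U\in H^s_+(\mathbb{T};\mathbb{C}^{M\times N})$ with $s>\tfrac{1}{2}$, the identity
\begin{equation*}
\mathbf{H}^{(\mathbf{r})}_{\Pi_{\geq 0}(UU^*U)} \;=\; \mathbf{T}^{(\mathbf{r})}_{UU^*}\mathbf{H}^{(\mathbf{r})}_U \;+\; \mathbf{H}^{(\mathbf{r})}_U\mathbf{T}^{(\mathbf{l})}_{U^*U} \;-\; \mathbf{H}^{(\mathbf{r})}_U\mathbf{H}^{(\mathbf{l})}_U\mathbf{H}^{(\mathbf{r})}_U,
\end{equation*}
its left mirror
\begin{equation*}
\mathbf{H}^{(\mathbf{l})}_{\Pi_{\geq 0}(UU^*U)} \;=\; \mathbf{T}^{(\mathbf{l})}_{U^*U}\mathbf{H}^{(\mathbf{l})}_U \;+\; \mathbf{H}^{(\mathbf{l})}_U\mathbf{T}^{(\mathbf{r})}_{UU^*} \;-\; \mathbf{H}^{(\mathbf{l})}_U\mathbf{H}^{(\mathbf{r})}_U\mathbf{H}^{(\mathbf{l})}_U,
\end{equation*}
together with the two analogous identities obtained by replacing each \emph{outer} $\mathbf{H}^{(\bullet)}$ by $\mathbf{K}^{(\bullet)}$. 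Each is verified by evaluating both sides on an arbitrary test matrix $F\in L^2_+$ and systematically splitting every interior matrix product via the decomposition $\mathrm{Id}=\Pi_{\geq 0}+\Pi_{<0}$, using the elementary relation $(\Pi_{\geq 0}A)^*=\Pi_{\leq 0}(A^*)$ to commute conjugate-transposes inside $\mathbf{H}^{(\mathbf{r})}_U$ and $\mathbf{H}^{(\mathbf{l})}_U$ past the Fourier projections.

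Given these factorizations, the first identity of \eqref{4HeiLaxMSzego} is obtained by pure algebra. The operator product rule together with $\partial_tU=-i\Pi_{\geq 0}(UU^*U)$, and the fact that $\mathbf{H}^{(\mathbf{r})}_U$ is $\mathbb{C}$-antilinear in its argument (so a scalar $-i$ pulled through the outer Hankel flips its sign), yields
\begin{equation*}
\tfrac{\mathrm{d}}{\mathrm{d}t}\bigl(\mathbf{H}^{(\mathbf{r})}_U\mathbf{H}^{(\mathbf{l})}_U\bigr) \;=\; -i\,\mathbf{H}^{(\mathbf{r})}_{\Pi_{\geq 0}(UU^*U)}\mathbf{H}^{(\mathbf{l})}_U \;+\; i\,\mathbf{H}^{(\mathbf{r})}_U\mathbf{H}^{(\mathbf{l})}_{\Pi_{\geq 0}(UU^*U)}.
\end{equation*}
Substituting the two factorizations, the cross Toeplitz contributions $\pm i\,\mathbf{H}^{(\mathbf{r})}_U\mathbf{T}^{(\mathbf{l})}_{U^*U}\mathbf{H}^{(\mathbf{l})}_U$ cancel, the quartic terms $\pm i(\mathbf{H}^{(\mathbf{r})}_U\mathbf{H}^{(\mathbf{l})}_U)^2$ cancel, and only $-i\mathbf{T}^{(\mathbf{r})}_{UU^*}\mathbf{H}^{(\mathbf{r})}_U\mathbf{H}^{(\mathbf{l})}_U+i\mathbf{H}^{(\mathbf{r})}_U\mathbf{H}^{(\mathbf{l})}_U\mathbf{T}^{(\mathbf{r})}_{UU^*}=i[\mathbf{H}^{(\mathbf{r})}_U\mathbf{H}^{(\mathbf{l})}_U,\mathbf{T}^{(\mathbf{r})}_{UU^*}]$ survives. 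The second identity (involving $\mathbf{T}^{(\mathbf{l})}_{U^*U}$) follows by the same argument starting from $\mathbf{H}^{(\mathbf{l})}_U\mathbf{H}^{(\mathbf{r})}_U$, or equivalently by invoking the transpose symmetry \eqref{Transpose}--\eqref{TconjHrHl}. The last two identities then follow from the $\mathbf{K}$-versions of the factorization lemma in exactly the same way; in particular, although $U\mapsto \mathbf{S}^*U$ does \emph{not} preserve solutions of \eqref{MSzego}, the $\mathbf{K}$-factorizations do not rely on any such reduction and are proved directly by the same $\Pi_{\geq 0}/\Pi_{<0}$-splitting technique.

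I expect the main obstacle to be the verification of the factorization lemma itself: with two independent Hankel variants $\mathbf{H}^{(\mathbf{r})},\mathbf{H}^{(\mathbf{l})}$, two Toeplitz variants $\mathbf{T}^{(\mathbf{r})},\mathbf{T}^{(\mathbf{l})}$, and prescribed shape constraints on the matrix-valued $L^2_+$ spaces that these operators act between, careful bookkeeping of the conjugate-transposes and of the matrix sizes is required when splitting each product into its $\Pi_{\geq 0}$- and $\Pi_{<0}$-parts. Once those factorizations are correctly stated, the remaining cancellations are essentially automatic.
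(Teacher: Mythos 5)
Your proposal is correct and takes essentially the same route as the paper. Your cubic factorization identities are exactly the $V=W=U$ specialization of the paper's Lemma~\ref{HUVWKUVWLem} (formula~\eqref{HUVWfor}), the product-rule/antilinearity step recovering $\tfrac{\mathrm{d}}{\mathrm{d}t}(\mathbf{H}^{(\mathbf{r})}_{U}\mathbf{H}^{(\mathbf{l})}_{U})=-i\mathbf{H}^{(\mathbf{r})}_{\Pi_{\geq 0}(UU^*U)}\mathbf{H}^{(\mathbf{l})}_U+i\mathbf{H}^{(\mathbf{r})}_U\mathbf{H}^{(\mathbf{l})}_{\Pi_{\geq 0}(UU^*U)}$ and the subsequent cancellations match the paper's proof of Theorem~\ref{LaxPairThm} precisely; the only minor variation is that you propose to prove the $\mathbf{K}$-factorizations directly by $\Pi_{\geq 0}/\Pi_{<0}$ splitting, whereas the paper derives them from the $\mathbf{H}$-factorizations together with the commutator identities~\eqref{commS*Toep(rlr)}, a difference of bookkeeping rather than of substance.
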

\begin{rem}
In the scalar case, i.e. $M=N=1$, the transpose transform $\mathfrak{T}$ becomes trivial and the right Hankel operator $\mathbf{H}^{(\mathbf{r})}_{U}$ coincides with the left Hankel operator $\mathbf{H}^{(\mathbf{l})}_{U}$. In that case, the single Hankel operator $H_u$ becomes a Lax operator of the cubic scalar Szeg\H{o} equation \eqref{sSzego}.
\end{rem}

\noindent Thanks to the unitary equivalence between $\mathbf{H}^{(\mathbf{r})}_{U(t)}\mathbf{H}^{(\mathbf{l})}_{U(t)}$ and $\mathbf{H}^{(\mathbf{r})}_{U(0)}\mathbf{H}^{(\mathbf{l})}_{U(0)}$, we have the following large time estimate for the high regularity Sobolev norms of the solution to \eqref{MSzego}.
\begin{cor}
 There exists a constant $\mathcal{C}_s >0$ such that if $U \in C^{\infty}(\mathbb{R}; H^s_+(\mathbb{T};  \mathbb{C}^{M\times N}))$ solves  \eqref{MSzego} for some $s>1$, then $ \sup_{t \in \mathbb{R}}\|U(t)\|_{L^{\infty}(\mathbb{T}; \mathbb{C}^{M \times N})} \leq 2 \mathrm{Tr}\sqrt{\mathbf{H}^{(\mathbf{r})}_{U(0)}\mathbf{H}^{(\mathbf{l})}_{U(0)}} \leq \mathcal{C}_s \|U(t)\|_{H^{s}(\mathbb{T}; \mathbb{C}^{M \times N})}$ and
\begin{equation}\label{AtmostExpGrowth}
 \sup_{t \in \mathbb{R}}\|U(t)\|_{H^{s}(\mathbb{T}; \mathbb{C}^{M \times N})} \leq \mathcal{C}_s   e^{\mathcal{C}_s  |t| \|U(0)\|_{H^{s}(\mathbb{T}; \mathbb{C}^{M \times N})} } \|U(0)\|_{H^{s}(\mathbb{T}; \mathbb{C}^{M \times N})}.
\end{equation}
\end{cor}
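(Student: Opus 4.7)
The argument divides into four steps. First, I apply Theorem~\ref{LaxPairThm}: its first identity reads $\tfrac{\mathrm{d}}{\mathrm{d}t}L(t) = i[L(t), T(t)]$ with $L(t) := \mathbf{H}^{(\mathbf{r})}_{U(t)}\mathbf{H}^{(\mathbf{l})}_{U(t)}$ and $T(t) := \mathbf{T}^{(\mathbf{r})}_{U(t)U(t)^*}$. Since $U \in C^\infty(\mathbb{R}; H^s_+)$ with $s > 1$ ensures $U(t)U(t)^* \in L^\infty$ at every time, $T(t)$ is bounded and self-adjoint on $L^2_+(\mathbb{T}; \mathbb{C}^{M \times d})$. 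The non-autonomous ODE $\tfrac{\mathrm{d}}{\mathrm{d}t}\mathcal{U}(t) = iT(t)\mathcal{U}(t)$, $\mathcal{U}(0)=\mathrm{Id}$, then admits a strongly continuous unitary propagator $\mathcal{U}(t)$, and $L(t) = \mathcal{U}(t) L(0) \mathcal{U}(t)^*$. This unitary equivalence transfers the entire spectrum (with multiplicities) from $L(0)$ to $L(t)$, so $\mathrm{Tr}\sqrt{L(t)}$ is conserved along the flow.

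Second, I prove the time-frozen pointwise bound $\|U\|_{L^\infty(\mathbb{T}; \mathbb{C}^{M\times N})} \leq 2\,\mathrm{Tr}\sqrt{\mathbf{H}^{(\mathbf{r})}_U \mathbf{H}^{(\mathbf{l})}_U}$ for any sufficiently regular $U \in H^s_+$. The starting point is the reconstruction identity $U = \mathbf{H}^{(\mathbf{r})}_U(I_N)$ (apply \eqref{rlHankelIntro} with $d=N$ and $F=I_N$), after which a singular value decomposition of the matrix Hankel operator expresses $U$ as a sum of rank-one contributions whose pointwise norms are controlled by the singular values. The factor $2$ arises from symmetrising this decomposition against its transposition-conjugate counterpart via \eqref{TconjHrHl}; the whole step is the matrix analogue of the classical Kronecker--Peller estimate $\|u\|_\infty \lesssim \|H_u\|_{S_1}$ for scalar Hankels. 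Third, I establish the Sobolev embedding $\mathrm{Tr}\sqrt{\mathbf{H}^{(\mathbf{r})}_{U(0)} \mathbf{H}^{(\mathbf{l})}_{U(0)}} \leq \mathcal{C}_s \|U(0)\|_{H^s}$ for $s>1$, a standard Peller-type bound obtained by decomposing $U(0)$ into its Fourier series and estimating the Schatten-$1$ norm of each rank-one piece of the Hankel operator.

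Combining the three preceding steps yields the uniform-in-time $L^\infty$ bound. To conclude with \eqref{AtmostExpGrowth}, I run an energy estimate on \eqref{MSzego}: a tame Moser product inequality gives $\|\Pi_{\geq 0}(UU^*U)\|_{H^s} \leq C\|U\|_{L^\infty}^2 \|U\|_{H^s}$, hence
\begin{equation*}
\tfrac{\mathrm{d}}{\mathrm{d}t}\|U(t)\|_{H^s} \leq C\|U(t)\|_{L^\infty}^2 \|U(t)\|_{H^s};
\end{equation*}
plugging in the uniform bound on $\|U(t)\|_{L^\infty}$ and invoking Gronwall's lemma yield \eqref{AtmostExpGrowth}. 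The main obstacle is step two: while the pointwise Kronecker--Peller estimate is classical in the scalar case, the matrix-valued version forces one to carefully pair $\mathbf{H}^{(\mathbf{r})}_U$ and $\mathbf{H}^{(\mathbf{l})}_U$, which act between different matrix spaces, and to track how the precise constant (and in particular the factor $2$) emerges from their coupling via transposition.
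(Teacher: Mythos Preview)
Your proposal is correct and follows precisely the approach the paper has in mind. Note, however, that the paper does not actually supply a proof of this corollary: it is stated immediately after Theorem~\ref{LaxPairThm} with only the one-line justification ``Thanks to the unitary equivalence between $\mathbf{H}^{(\mathbf{r})}_{U(t)}\mathbf{H}^{(\mathbf{l})}_{U(t)}$ and $\mathbf{H}^{(\mathbf{r})}_{U(0)}\mathbf{H}^{(\mathbf{l})}_{U(0)}$'', and the remaining details (the Peller-type $L^\infty$ bound, the Schatten-$1$ Sobolev embedding, and the Gronwall energy estimate) are left implicit. Your four-step outline is exactly the natural unpacking of that hint, and Corollary~\ref{corUniEquivSzego} later in the paper confirms your Step~1 verbatim. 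One small remark: the second inequality in the statement is $\mathrm{Tr}\sqrt{\mathbf{H}^{(\mathbf{r})}_{U(0)}\mathbf{H}^{(\mathbf{l})}_{U(0)}} \leq \mathcal{C}_s \|U(t)\|_{H^s}$ for \emph{every} $t$, so after proving the frozen-time bound $\mathrm{Tr}\sqrt{\mathbf{H}^{(\mathbf{r})}_{V}\mathbf{H}^{(\mathbf{l})}_{V}} \leq \mathcal{C}_s\|V\|_{H^s}$ you should invoke the conservation of the trace once more to replace $U(0)$ by $U(t)$ on the right-hand side.
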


\noindent Given any positive integers $M,N\in\mathbb{N}_+$, the integral operator is defined by
\begin{equation}\label{IntdefIntro}
\mathbf{I} : F \in  L^1 (\mathbb{T}; \mathbb{C}^{M\times N}) \mapsto \mathbf{I}(F)= \hat{F}(0) =\frac{1}{2\pi} \int_{0}^{2 \pi} F(x) \mathrm{d}x  \in \mathbb{C}^{M \times N}.
\end{equation}Inspired from the works G\'erard--Grellier \cite{GGTAMS}, G\'erard \cite{BOExp2022} and  Badreddine \cite{Badreddine2023}, we compare three families of unitary operators acting on the shift operator $\mathbf{S}^* \in \mathcal{B}\left(L^2_+(\mathbb{T}; \mathbb{C}^{M\times N})\right)$ by conjugation in order to linearize the Hamiltonian flow of \eqref{MSzego} and obtain an explicit expression of the Poisson integral of every $H^{\frac{1}{2}}_+$-solution. This explicit formula is given by the following theorem.

\begin{thm}\label{ExpForThm}
Given $M,N \in \mathbb{N}_+$, if $U : t \in \mathbb{R} \mapsto U(t)=\sum_{n \geq 0}\hat{U}(t,n)\mathbf{e}_n\in   H^{\frac{1}{2}}_+(\mathbb{T};  \mathbb{C}^{M\times N})$ solves the matrix Szeg\H{o} equation \eqref{MSzego} with initial data $U(0)=U_0 \in H^{\frac{1}{2}}_+(\mathbb{T};  \mathbb{C}^{M\times N})$, then
\begin{equation}\label{mszeExpFor}
\begin{split}
\hat{U}(t,n) = & \mathbf{I}\left( (e^{-it  \mathbf{H}^{(\mathbf{r})}_{U_0}\mathbf{H}^{(\mathbf{l})}_{U_0}} e^{it  \mathbf{K}^{(\mathbf{r})}_{U_0}\mathbf{K}^{(\mathbf{l})}_{U_0}} \mathbf{S}^*)^n e^{-it  \mathbf{H}^{(\mathbf{r})}_{U_0}\mathbf{H}^{(\mathbf{l})}_{U_0}}  (U_0) \right) \\
= & \mathbf{I}\left( (e^{-it  \mathbf{H}^{(\mathbf{l})}_{U_0}\mathbf{H}^{(\mathbf{r})}_{U_0}} e^{it  \mathbf{K}^{(\mathbf{l})}_{U_0}\mathbf{K}^{(\mathbf{r})}_{U_0}} \mathbf{S}^*)^n e^{-it  \mathbf{H}^{(\mathbf{l})}_{U_0}\mathbf{H}^{(\mathbf{r})}_{U_0}}  (U_0) \right) \in \mathbb{C}^{M \times N}.
\end{split}
\end{equation}
\end{thm}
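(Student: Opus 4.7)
The plan is to conjugate by the unitary propagator of the matrix Szeg\H{o} flow and to interpolate between the explicit right-hand side of \eqref{mszeExpFor} and the Fourier coefficient $\hat U(t,n)$.

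\textbf{Step 1 (Propagator and Lax conjugation).} Rewrite \eqref{MSzego} as the non-autonomous linear ODE $\partial_t U = B(t) U$ with skew-adjoint generator $B(t) := -i \mathbf{T}^{(\mathbf{r})}_{U(t) U(t)^*}$ on $L^2_+(\mathbb{T}; \mathbb{C}^{M \times N})$, and let $W(t)$ denote the associated unitary propagator ($\partial_t W = BW$, $W(0) = \mathrm{Id}$); then $U(t) = W(t) U_0$. By Theorem \ref{LaxPairThm}, the operators $L_H(t) := \mathbf{H}^{(\mathbf{r})}_{U(t)} \mathbf{H}^{(\mathbf{l})}_{U(t)}$ and $L_K(t) := \mathbf{K}^{(\mathbf{r})}_{U(t)} \mathbf{K}^{(\mathbf{l})}_{U(t)}$ satisfy Lax equations sharing the same generator $B$, so
\[
L_H(t) = W(t) L_H(0) W(t)^{-1}, \qquad L_K(t) = W(t) L_K(0) W(t)^{-1},
\]
and hence $e^{-i\tau L_H(t)} = W(t) e^{-i\tau L_H(0)} W(t)^{-1}$ for every $\tau \in \mathbb{R}$ (and likewise for $L_K$).

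\textbf{Step 2 (Interpolation).} For $0 \le s \le t$, define
\[
\chi_n(t, s) := \mathbf{I}\bigl((e^{-i(t-s)L_H(s)} e^{i(t-s)L_K(s)} \mathbf{S}^*)^n e^{-i(t-s)L_H(s)} U(s)\bigr).
\]
At $s = t$ the exponentials collapse and $\chi_n(t, t) = \mathbf{I}((\mathbf{S}^*)^n U(t)) = \hat U(t, n)$; at $s = 0$, $\chi_n(t, 0)$ equals the first right-hand side of \eqref{mszeExpFor}. The theorem then reduces to proving $\partial_s \chi_n(t, s) \equiv 0$.

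\textbf{Step 3 (Vanishing of $\partial_s \chi_n$).} For $n = 0$, using Step 1 one rewrites $e^{-i(t-s)L_H(s)} U(s) = W(s) e^{-i(t-s)L_H(0)} U_0$; differentiating in $s$ and collecting the $B(s)$ and the $L_H(s)$ contributions yields
\[
\partial_s \chi_0(t, s) = -i\, \mathbf{I}\bigl((\mathbf{T}^{(\mathbf{r})}_{U(s) U(s)^*} - L_H(s))\, e^{-i(t-s)L_H(s)} U(s)\bigr).
\]
Unwinding the definitions shows that $\mathbf{T}^{(\mathbf{r})}_{UU^*} - L_H$ acts by $F \mapsto \Pi_{\geq 0}\bigl(U\, \Pi_{>0}(U^* F)\bigr)$; since $U \in L^2_+$ while $\Pi_{>0}(U^* F)$ has only strictly positive Fourier modes, their product has vanishing $0$-th Fourier coefficient by Plancherel, so $\mathbf{I}$ kills it. For $n \ge 1$ the Leibniz rule produces additional terms carrying commutators of the form $[B(s), \mathbf{S}^*]$; to handle them one invokes the companion ``rank-type'' identity
\[
(L_H - L_K)(F) = U\, \mathbf{I}(U^* F),
\]
which follows from $\mathbf{K}^{\bullet}_U = \mathbf{H}^{\bullet}_U \mathbf{S}$ together with $\mathbf{S}\mathbf{S}^* = \mathrm{Id} - P_0$ on $L^2_+$. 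Combined with the Lax conjugation of Step 1, every surviving term in $\partial_s \chi_n$ is of the form $\mathbf{I}(U \cdot G)$ with $G$ lying in $\mathbf{S}\, L^2_+$, hence is annihilated by $\mathbf{I}$.

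\textbf{Step 4 (Second formula).} The second identity in \eqref{mszeExpFor} follows from the first: \eqref{MSzego} is invariant under $U \mapsto U^{\mathrm{T}}$, the intertwining \eqref{TconjHrHl} converts right Hankel operators of $U^{\mathrm{T}}$ into left Hankel operators of $U$ (and similarly for the shifted Hankel operators), and $\mathfrak{T}$ commutes with $\mathbf{I}$ and $\mathbf{S}^*$; so applying the first formula to $U^{\mathrm{T}}$ and transposing produces the second formula for $U$. The hardest part of this plan is the algebraic bookkeeping in Step 3 for general $n$: one must control the many terms produced by differentiating an $n$-fold product and show, using both identities above together with the Lax conjugation, that they collapse to $\mathbf{I}$-traces that vanish by the positive-frequency Plancherel mechanism.
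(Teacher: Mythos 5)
Your plan takes a genuinely different route from the paper's. The paper first restricts to rational initial data $U_0 \in \mathcal{M}_{\mathrm{FR}}^{M \times N}$, where Kronecker's theorem (Proposition~\ref{WKroneckerM*N} and Remark~\ref{ImHr=ImHrHl}) forces $\mathrm{Im}\,\mathbf{H}^{(\mathbf{r})}_{U_0} = \mathrm{Im}\,\mathbf{H}^{(\mathbf{r})}_{U_0}\mathbf{H}^{(\mathbf{l})}_{U_0}$ to be a \emph{finite-dimensional} invariant subspace containing $U_0$; it then exploits the conjugation identity \eqref{W*S*WHH} on that subspace through the Poisson-integral resolvent \eqref{InvForTorus}, expands in powers of $z$, and finally passes to general $H^{1/2}_+$ data by density (Lemma~\ref{denseFR}, Proposition~\ref{GWPH0.5}, identity~\eqref{TrHrHl}). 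Your interpolation $\chi_n(t,s)$ targets the Fourier coefficient $\hat U(t,n)$ directly and, if completed, would bypass the Kronecker$\slash$finite-rank discussion altogether; that is an attractive simplification.

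Your Steps~1, 2 and 4 are sound, and your $n=0$ computation is correct: indeed $(\mathbf{T}^{(\mathbf{r})}_{UU^*} - \mathbf{H}^{(\mathbf{r})}_U\mathbf{H}^{(\mathbf{l})}_U)(F) = \Pi_{\geq 0}\bigl(U\,\Pi_{>0}(U^*F)\bigr)$, whose zeroth Fourier coefficient vanishes because $U$ is analytic and $\Pi_{>0}(U^*F)$ has only strictly positive modes.

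The genuine gap is Step~3 for $n\geq 1$, which you explicitly leave as ``algebraic bookkeeping''. As written, the claim that all surviving Leibniz terms reduce to $\mathbf{I}(U\cdot G)$ with $G\in\mathbf{S}L^2_+$ is not justified, and the ``rank-type'' identity $(L_H-L_K)F = U\,\mathbf{I}(U^*F)$ does not by itself organize the cancellations. The observation that actually closes the argument is the \emph{operator} identity
\begin{equation*}
\bigl(B(s) + i L_K(s)\bigr)\mathbf{S}^* \;=\; \mathbf{S}^*\bigl(B(s) + i L_H(s)\bigr),
\end{equation*}
where $B(s)=-i\mathbf{T}^{(\mathbf{r})}_{U(s)U(s)^*}$, $L_H(s)=\mathbf{H}^{(\mathbf{r})}_{U(s)}\mathbf{H}^{(\mathbf{l})}_{U(s)}$, $L_K(s)=\mathbf{K}^{(\mathbf{r})}_{U(s)}\mathbf{K}^{(\mathbf{l})}_{U(s)}$. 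This follows from $L_K=\mathbf{S}^*L_H\mathbf{S}$, $\mathbf{S}\mathbf{S}^*=\mathrm{id}-\mathbf{I}$ (formula~\eqref{inverseSS*}), the commutator formula~\eqref{ForCommTSS*}, and the fact~\eqref{HHP} that $L_H$ and $\mathbf{T}^{(\mathbf{r})}_{UU^*}$ agree on constants — these are precisely the ingredients behind the paper's Lemma~\ref{HUVWKUVWLem} and formula~\eqref{commS*Toep(rlr)}, reorganized. Setting $\Theta(s):=B(s)+iL_H(s)$ and $Z(s):=e^{-i(t-s)L_H(s)}e^{i(t-s)L_K(s)}\mathbf{S}^*$, the identity gives $\partial_s Z=[\Theta,Z]$, while $\partial_s\bigl(e^{-i(t-s)L_H(s)}U(s)\bigr)=\Theta\bigl(e^{-i(t-s)L_H(s)}U(s)\bigr)$; the Leibniz expansion of $\partial_s\bigl(Z^n e^{-i(t-s)L_H(s)}U(s)\bigr)$ then telescopes to $\Theta\bigl(Z^n e^{-i(t-s)L_H(s)}U(s)\bigr)$, and $\mathbf{I}\circ\Theta\equiv 0$ finishes the argument. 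Without this telescoping structure made explicit, Step~3 is not a proof.

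One further point: the $s$-derivatives above require $U\in C^1(\mathbb{R};H^s_+)$ for some $s>\tfrac12$, so you still need the same density and continuity argument (as in the paper's final paragraph) to reach general $U_0\in H^{1/2}_+$.
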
 
\noindent Since the single Hankel operator is no longer a Lax operator, some steps in the proof of Theorem 1 of G\'erard--Grellier  \cite{GGTAMS} needs to be modified in order to prove theorem $\ref{ExpForThm}$. Thanks to the Kronecker theorem, the right Hankel operator $\mathbf{H}^{(\mathbf{r})}_{U}$ and the double Hankel operator $\mathbf{H}^{(\mathbf{r})}_{U} \mathbf{H}^{(\mathbf{l})}_{U}$ have the same image, when the symbol $U$ is rational. Then we start to prove \eqref{mszeExpFor} for rational solutions and complete the proof by density argument. \\ 

\noindent This paper is organized as follows. We recall matrix-valued functional spaces and inequalities in section $\ref{SecPre}$. Section $\ref{SecLaxP}$ is dedicated to establishing the Lax pair structure of \eqref{MSzego} and proving theorem $\ref{LaxPairThm}$. The explicit formula is obtained in section $\ref{SecExplicit}$.

\section{Preliminaries}\label{SecPre}
\noindent We give some preliminaries of the matrix valued functional spaces and prove proposition $\ref{GWPH0.5}$. Given $p \in [1, +\infty]$,  $s \geq 0$ and $M,N \in \mathbb{N}_+$, a matrix function $U=\left(U_{kj}\right)_{1 \leq k \leq M, 1 \leq j \leq N}$ belongs to $L^p(\mathbb{T}; \mathbb{C}^{M \times N})$ if and only if its $kj$-entry $U_{kj}$ belongs to $L^p(\mathbb{T}; \mathbb{C})$. We set
\begin{equation}\label{LpMat}
\begin{split}
& \|U\|_{L^p(\mathbb{T}; \mathbb{C}^{M \times N})}^2 := \sum_{k=1}^M \sum_{j=1}^N \|U_{kj}\|_{L^p(\mathbb{T}; \mathbb{C})}^2 = \|U^*\|_{L^p(\mathbb{T}; \mathbb{C}^{N \times M})}^2 ; \\
 & \|U\|_{H^s(\mathbb{T}; \mathbb{C}^{M \times N})}^2 := \sum_{k=1}^M \sum_{j=1}^N \|U_{kj}\|_{H^s(\mathbb{T}; \mathbb{C})}^2 =\|U^*\|_{H^s(\mathbb{T}; \mathbb{C}^{N \times M})}^2 .
\end{split}
\end{equation}Let $H^s(\mathbb{T}; \mathbb{C}^{M\times N})$ denote the matrix-valued Sobolev space, i.e. 
\begin{equation}\label{M-Hs}
H^s(\mathbb{T}; \mathbb{C}^{M\times N}) ):= \{U = \sum_{k=1}^M\sum_{j=1}^N U_{kj} \mathbb{E}_{{kj}}^{(MN)}  : U_{kj} \in H^s(\mathbb{T};  \mathbb{C}), \; \forall 1\leq k \leq M,  \;
1 \leq j \leq N \}.
\end{equation}Then $L^2(\mathbb{T}; \mathbb{C}^{M\times N}) = H^0(\mathbb{T}; \mathbb{C}^{M\times N})$. Equipped with the following inner product
\begin{equation}\label{InnerPro}
(U, V) \in L^2(\mathbb{T};  \mathbb{C}^{M\times N})^2 \mapsto \langle U, V \rangle_{L^2(\mathbb{T};  \mathbb{C}^{M\times N}) } := \frac{1}{2\pi}\int_0^{2\pi}\mathrm{tr} \left( U(x) V(x)^{*}\right)\mathrm{d}x \in \mathbb{C},
\end{equation}$L^2(\mathbb{T};  \mathbb{C}^{M\times N})$ is a $\mathbb{C}$-Hilbert space. Given a function  $U = \left(U_{kj}\right)_{1 \leq k, j \leq d} \in L^2(\mathbb{T}; \mathbb{C}^{M \times N})$, its Fourier expansion is given as follows,
\begin{equation}
U(x)= \sum_{n \in \mathbb{Z}}\hat{U}(n)e^{inx}, \quad \hat{U}(n)=\frac{1}{2\pi}\int_{0}^{2\pi}U(x)e^{-inx} \mathrm{d}x= \sum_{k=1}^M\sum_{j=1}^N \hat{U}_{kj}(n) \mathbb{E}_{{kj}}^{(MN)}  \in \mathbb{C}^{M\times N}.
\end{equation}The Parseval's formula reads as
\begin{equation}\label{Parseval}
\langle U, V\rangle_{L^2} = \sum_{k=1}^M\sum_{j=1}^N \int_{\mathbb{R}}U_{kj}(x) \overline{V_{kj}(x)} \mathrm{d}x =  \sum_{n \in \mathbb{Z}} \sum_{k=1}^M\sum_{j=1}^N\hat{U}_{kj}(n) \overline{\hat{V}_{kj}(n)} 
= \sum_{n \in \mathbb{Z}} \mathrm{tr} \left(\hat{U}(n)\left(\hat{V}(n)\right)^* \right).
\end{equation}The negative Szeg\H{o} projector  $\Pi_{< 0} = \mathrm{id}_{L^2(\mathbb{T};  \mathbb{C}^{M\times N})} - \Pi_{\geq 0}$ on $L^2(\mathbb{T};  \mathbb{C}^{M\times N})$ is given by
\begin{equation}\label{Szego-}
 \Pi_{<0}(U)  := \sum_{n < 0}\hat{U}(n) \mathbf{e}_n \in  L^2(\mathbb{T};  \mathbb{C}^{M\times N}), \quad \forall U= \sum_{n \in \mathbb{Z}}\hat{U}(n)\mathbf{e}_n \in  L^2(\mathbb{T};  \mathbb{C}^{M\times N}).
 \end{equation}The filtered $H^s$-spaces are given by 
 \begin{equation}
 H^s_+(\mathbb{T};  \mathbb{C}^{M\times N}):=  \Pi_{\geq 0}\left(H^s(\mathbb{T};  \mathbb{C}^{M\times N}) \right), \quad  H^s_-(\mathbb{T};  \mathbb{C}^{M\times N}):=  \Pi_{<0}\left(H^s(\mathbb{T};  \mathbb{C}^{M\times N}) \right)
 \end{equation}Then the $\mathbb{C}$-Hilbert space $ L^2(\mathbb{T};  \mathbb{C}^{M\times N})$ has the following orthogonal decomposition
 \begin{equation}
 L^2(\mathbb{T};  \mathbb{C}^{M\times N}) = L^2_+(\mathbb{T};  \mathbb{C}^{M\times N}) \bigoplus L^2_-(\mathbb{T};  \mathbb{C}^{M\times N}), \quad L^2_+(\mathbb{T};  \mathbb{C}^{M\times N}) \perp L^2_-(\mathbb{T};  \mathbb{C}^{M\times N}).
 \end{equation}For any $U=\sum_{n\in \mathbb{Z}}\hat{U}(n)\mathbf{e}_n\in L^2(\mathbb{T}; \mathbb{C}^{M\times N})$, where $\mathbf{e}_n: x \in \mathbb{T}\mapsto e^{inx}\in \mathbb{C}$, then  
\begin{equation}\label{Pi<0adjTorus}
\Pi_{<0} U = \left(\Pi_{\geq 0}(U^*) \right)^* - \hat{U}(0) \in L^2_-(\mathbb{T}; \mathbb{C}^{M\times N}), \quad \left(\Pi_{\geq 0} U \right)^* = \Pi_{<0}(U^*) + ( \hat{U}(0) )^* \in L^2(\mathbb{T}; \mathbb{C}^{N\times M}).
\end{equation}

\begin{lem}
If $U \in L^2(\mathbb{T}; \mathbb{C}^{M\times N})$, $A\in \mathbb{C}^{N \times P}$, $B\in \mathbb{C}^{Q\times M}$ for some $M,N,P,Q \in \mathbb{N}_+$, then
\begin{equation}\label{matrixXL^2}
\begin{split}
& \Pi_{\geq 0} (UA) = \left(\Pi_{\geq 0}U \right)A \in L^2_+ (\mathbb{T}; \mathbb{C}^{M \times P}); \quad \Pi_{< 0} (UA) = \left(\Pi_{< 0}U\right)A \in L^2_- (\mathbb{T}; \mathbb{C}^{M \times P}); \\
& \Pi_{\geq 0} (BU) = B \left(\Pi_{\geq 0}U \right) \in  L^2_+ (\mathbb{T}; \mathbb{C}^{Q \times N}); \quad  \Pi_{< 0} (BU) = B \left(\Pi_{< 0}U \right) \in  L^2_- (\mathbb{T}; \mathbb{C}^{Q \times N}).
\end{split}
\end{equation}
\end{lem}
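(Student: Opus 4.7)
The plan is to reduce everything to the Fourier side, since the Szeg\H{o} projectors $\Pi_{\geq 0}$ and $\Pi_{<0}$ act diagonally on Fourier modes, while multiplication on the left or right by a constant matrix $A$ or $B$ acts at each frequency by ordinary matrix multiplication. The crucial observation is that $A$ and $B$ are independent of the spatial variable $x$, so that $\widehat{UA}(n)=\hat{U}(n)A$ and $\widehat{BU}(n)=B\hat{U}(n)$ for every $n\in\mathbb{Z}$; this comes straight from the integral definition of the Fourier coefficient in \eqref{MSzegoop} together with linearity.

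First I would fix $U=\sum_{n\in\mathbb{Z}}\hat{U}(n)\mathbf{e}_n\in L^2(\mathbb{T};\mathbb{C}^{M\times N})$ and note that $UA\in L^2(\mathbb{T};\mathbb{C}^{M\times P})$ and $BU\in L^2(\mathbb{T};\mathbb{C}^{Q\times N})$, with $\|UA\|_{L^2}\leq \|A\|\|U\|_{L^2}$ and similarly for $BU$. Applying the Parseval-type computation \eqref{Parseval} to $UA$ gives the Fourier series $UA=\sum_{n\in\mathbb{Z}}(\hat{U}(n)A)\mathbf{e}_n$, convergent in $L^2$. Then by the very definition of $\Pi_{\geq 0}$ in \eqref{MSzegoop},
\begin{equation*}
\Pi_{\geq 0}(UA)=\sum_{n\geq 0}\hat{U}(n)A\,\mathbf{e}_n=\Bigl(\sum_{n\geq 0}\hat{U}(n)\mathbf{e}_n\Bigr)A=(\Pi_{\geq 0}U)A,
\end{equation*}
and subtracting from $UA$ gives the analogous identity for $\Pi_{<0}$.

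The identities for $BU$ are obtained in exactly the same way, using that $B\hat{U}(n)$ is well defined since $B\in\mathbb{C}^{Q\times M}$ and $\hat{U}(n)\in\mathbb{C}^{M\times N}$. The membership statements $(\Pi_{\geq 0}U)A\in L^2_+(\mathbb{T};\mathbb{C}^{M\times P})$ and $B(\Pi_{\geq 0}U)\in L^2_+(\mathbb{T};\mathbb{C}^{Q\times N})$ (resp.\ their $L^2_-$ analogues) then follow from the definition of the filtered spaces, since each summand $\hat{U}(n)A\,\mathbf{e}_n$ or $B\hat{U}(n)\,\mathbf{e}_n$ has only nonnegative (resp.\ strictly negative) Fourier modes.

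No real obstacle arises: the content of the lemma is just that $\Pi_{\geq 0}$ and $\Pi_{<0}$ commute with constant-matrix multiplication on either side, which is a triviality once the Fourier expansion is in hand. The only mild care needed is to handle convergence of the Fourier series in $L^2$ rather than pointwise, but this is handled by the contractivity of the projectors on $L^2(\mathbb{T};\mathbb{C}^{M\times N})$ and the boundedness of left/right multiplication by a constant matrix.
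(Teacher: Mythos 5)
Your proof is correct and essentially matches the paper's: both arguments amount to noting that $\Pi_{\geq 0}$ and $\Pi_{<0}$ commute with multiplication by a constant matrix because the matrix does not depend on $x$. The only cosmetic difference is that the paper verifies the identities entry-by-entry, $\left(\Pi_{\geq 0}(UA)\right)_{kj}=\sum_{n}(\Pi_{\geq 0}U_{kn})A_{nj}$, using $\mathbb{C}$-linearity of the scalar projector, whereas you work directly with the matrix-valued Fourier coefficients $\widehat{UA}(n)=\hat U(n)A$ and $\widehat{BU}(n)=B\hat U(n)$; the invocation of Parseval is not really needed there, since those identities follow immediately from the integral formula \eqref{MSzegoop} for $\hat U(n)$, but this does not affect the validity of the argument.
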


\begin{proof}
Since $A_{nj}\in \mathbb{C}$, then $\left(\Pi_{\geq 0} (UA)\right)_{kj}=\sum_{n=1}^N(\Pi_{\geq 0}U_{kn})A_{nj} = \left((\Pi_{\geq 0}U)A \right)_{kj}$, if $1 \leq k \leq M$, $1 \leq j \leq P$. Since $B_{sm} \in \mathbb{C}$, then $\left(\Pi_{\geq 0} (BU)\right)_{st}=\sum_{m=1}^N B_{sm} \Pi_{\geq 0}U_{mt} = \left(B(\Pi_{\geq 0}U) \right)_{st}$, if $1 \leq s \leq Q$, $1 \leq t \leq N$.
\end{proof}

\begin{lem}\label{AB*L2-}
Given $A \in L^2_-(\mathbb{T}; \mathbb{C}^{M\times N})$ and $B \in L^2_+(\mathbb{T}; \mathbb{C}^{d \times N})$ for some $M,N, d \in \mathbb{N}_+$, if one of $A, B$ is essentially bounded , then $AB^* \in L^2_-(\mathbb{T}; \mathbb{C}^{M \times d})$.
\end{lem}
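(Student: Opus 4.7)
The plan is to reduce the claim to an elementary Fourier-support computation on scalar entries. First I would use the hypothesis that one of $A$ or $B$ is in $L^\infty$ to upgrade the product to $L^2$: by H\"older's inequality applied entrywise, $AB^*$ belongs to $L^2(\mathbb{T}; \mathbb{C}^{M \times d})$ and hence admits a genuine $L^2$ Fourier series. Without this hypothesis, the product of two $L^2$ matrices would only lie in $L^1$, and the statement of membership in $L^2_-$ would not even be meaningful.

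Next I would compute $\widehat{AB^*}(k)$ for $k \geq 0$ and show that it vanishes. Writing $(AB^*)_{ij} = \sum_{\ell=1}^N A_{i\ell}\, \overline{B_{j\ell}}$, observe that each $A_{i\ell} \in L^2_-(\mathbb{T};\mathbb{C})$ has Fourier support in $\mathbb{Z}_{<0}$, while each $B_{j\ell} \in L^2_+(\mathbb{T};\mathbb{C})$ has Fourier support in $\mathbb{Z}_{\geq 0}$; consequently $\overline{B_{j\ell}}$ has Fourier support in $\mathbb{Z}_{\leq 0}$. Applying the scalar Fourier convolution identity (valid because one factor is bounded), a nonzero contribution to
\begin{equation*}
\widehat{A_{i\ell} \overline{B_{j\ell}}}(k) \;=\; \sum_{n \in \mathbb{Z}} \widehat{A_{i\ell}}(n)\, \widehat{\overline{B_{j\ell}}}(k-n)
\end{equation*}
requires simultaneously $n < 0$ and $k - n \leq 0$, i.e.\ $k \leq n < 0$, which is impossible as soon as $k \geq 0$.

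Summing over $\ell$ yields $\widehat{AB^*}(k) = 0$ for every $k \geq 0$, and combined with the first step this proves $AB^* \in L^2_-(\mathbb{T}; \mathbb{C}^{M \times d})$. There is no serious obstacle in this argument; the only subtlety is that the $L^\infty$ hypothesis is precisely what is needed to make sense of the pointwise convolution identity for Fourier coefficients in a function space where the partial sums actually converge, after which the conclusion is a direct support computation carried out entry by entry.
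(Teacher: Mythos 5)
Your proof is correct and follows essentially the same route as the paper's: a direct Fourier-support computation showing that the convolution sum for $\widehat{AB^*}(k)$ has no nonzero terms when $k \geq 0$, with the $L^\infty$ hypothesis securing $L^2$ membership of the product. The only cosmetic difference is that you reduce to scalar entries while the paper writes the convolution directly for the matrix-valued Fourier series; note also that the scalar convolution identity already holds for a product of two $L^2$ functions (the series is absolutely convergent by Cauchy--Schwarz), so the $L^\infty$ hypothesis is used for the $L^2$ membership rather than, as you suggest at the end, to justify the convolution formula itself.
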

\begin{proof}
If $A = \sum_{n \geq 0}\hat{A}(n)\mathbf{e}_n$, $B = \sum_{n \geq 0}\hat{B}(n)\mathbf{e}_n$,  $AB^*= \sum_{l \leq -1} \left(\sum_{n=l}^{-1}\hat{A}(n)(\hat{B}(n-l))^*\right)\mathbf{e}_l\in L^2_-.$
\end{proof}

\begin{lem}\label{A*BL2-}
Given $A \in L^2_+(\mathbb{T}; \mathbb{C}^{M\times N})$ and $B \in L^2_-(\mathbb{T}; \mathbb{C}^{M \times d})$ for some $M,N, d \in \mathbb{N}_+$, if one of $A, B$ is essentially bounded , then $A^* B \in L^2_-(\mathbb{T}; \mathbb{C}^{N \times d})$.
\end{lem}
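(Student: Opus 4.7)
The plan is to mirror the Fourier-expansion argument used in the proof of Lemma \ref{AB*L2-}, the only difference being that the $*$-operation (complex conjugate transpose) is applied on the opposite factor, which flips its Fourier support into the non-positive modes.

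First I would expand $A = \sum_{n \geq 0}\hat{A}(n)\mathbf{e}_n$ and $B = \sum_{m \leq -1}\hat{B}(m)\mathbf{e}_m$. Because $\overline{e^{inx}} = e^{-inx}$, taking the adjoint transposes each Fourier coefficient and reflects the index, giving
\begin{equation*}
A^* = \sum_{n \geq 0}\hat{A}(n)^{*}\mathbf{e}_{-n},
\end{equation*}
so the Fourier support of $A^{*}$ is contained in $\{k \in \mathbb{Z} : k \leq 0\}$.

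The essential boundedness of one of $A,B$ combined with the $L^2$-membership of the other ensures that $A^*B \in L^2(\mathbb{T}; \mathbb{C}^{N \times d})$, so its Fourier coefficients may be computed by the (absolutely convergent) Cauchy product
\begin{equation*}
A^* B \;=\; \sum_{l \in \mathbb{Z}}\left(\sum_{\substack{n \geq 0,\; m \leq -1\\ m - n = l}}\hat{A}(n)^{*}\,\hat{B}(m)\right)\mathbf{e}_l.
\end{equation*}
The constraints $n \geq 0$ and $m \leq -1$ force $l = m - n \leq -1$, so every nonzero Fourier mode of $A^{*}B$ is strictly negative, which yields $A^{*}B \in L^2_-(\mathbb{T}; \mathbb{C}^{N \times d})$.

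I do not anticipate any real obstacle: the only subtle point is the justification that the formal Cauchy product equals $A^{*}B$ in $L^2$, which follows from the boundedness hypothesis exactly as in Lemma \ref{AB*L2-}.
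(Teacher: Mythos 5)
Your proof is correct and follows essentially the same Fourier-convolution argument as the paper's. The only difference is cosmetic: the paper's one-line proof contains a typo expanding $B=\sum_{n\geq 0}\hat B(n)\mathbf{e}_n$ rather than $B=\sum_{m\leq -1}\hat B(m)\mathbf{e}_m$, which you have written correctly, and after the change of index $n=m-(\text{your }l)$ your double sum reduces exactly to the paper's $\sum_{n=l}^{-1}\hat A(n-l)^*\hat B(n)$.
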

\begin{proof}
If $A = \sum_{n \geq 0}\hat{A}(n)\mathbf{e}_n$, $B = \sum_{n \geq 0}\hat{B}(n)\mathbf{e}_n$,  $A^* B= \sum_{l\leq -1} \left(\sum_{n=l}^{-1}(\hat{A}(n-l))^*\hat{B}(n) \right)\mathbf{e}_l\in L^2_-.$
\end{proof}

\begin{lem}\label{ABL2+}
Given $A \in L^2_+(\mathbb{T}; \mathbb{C}^{M\times N})$ and $B \in L^2_+(\mathbb{T}; \mathbb{C}^{N \times d})$ for some $M,N, d \in \mathbb{N}_+$, if one of $A, B$ is essentially bounded , then $A  B \in L^2_+(\mathbb{T}; \mathbb{C}^{M \times d})$.
\end{lem}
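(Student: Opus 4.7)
The plan is to mirror the short computations used for Lemmas \ref{AB*L2-} and \ref{A*BL2-}, and split the argument into (i) showing that the pointwise product $AB$ actually defines an $L^2$-matrix function, and (ii) checking that every negative Fourier mode of this product vanishes.

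For step (i), I would invoke the essential boundedness hypothesis entry-wise: each entry $(AB)_{kj} = \sum_{n=1}^N A_{kn} B_{nj}$ is a finite sum of products of the form (bounded function) $\times$ ($L^2$ function), which lies in $L^2(\mathbb{T}; \mathbb{C})$ by H\"older's inequality. Hence $AB \in L^2(\mathbb{T}; \mathbb{C}^{M\times d})$. For step (ii), I would write $A=\sum_{m\geq 0}\hat{A}(m)\mathbf{e}_m$ and $B=\sum_{n\geq 0}\hat{B}(n)\mathbf{e}_n$, and observe that for nonnegative integers $m,n$ one has $m+n \geq 0$, so regrouping by the total degree $l=m+n$ gives
\begin{equation*}
AB = \sum_{l \geq 0}\Bigl(\sum_{m=0}^{l}\hat{A}(m)\hat{B}(l-m)\Bigr)\mathbf{e}_l \in L^2_+(\mathbb{T}; \mathbb{C}^{M\times d}).
\end{equation*}

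The only mildly delicate point, and the one where I expect to have to say a word or two, is the rigorous justification for interchanging the product with the (infinite) Fourier sum when just one factor is bounded. I would handle this by truncation: let $A_J := \sum_{0 \leq m \leq J}\hat{A}(m)\mathbf{e}_m$ and $B_J := \sum_{0 \leq n \leq J}\hat{B}(n)\mathbf{e}_n$. Then $A_J B_J$ is a trigonometric polynomial whose Fourier support is contained in $\{l\geq 0\}$, and $A_J B_J \to AB$ in $L^1(\mathbb{T}; \mathbb{C}^{M\times d})$ (using $\|A_J\|_{L^\infty} \leq \|A\|_{L^\infty}$ in the case where $A$ is the bounded factor, and the symmetric estimate otherwise). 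Since the Fourier coefficients are continuous with respect to $L^1$-convergence, the negative Fourier modes of $AB$ vanish, completing the proof.
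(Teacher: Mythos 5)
Your overall approach matches the paper's, which proves the lemma by simply recording the Cauchy product formula
\begin{equation*}
AB = \sum_{l \geq 0}\Bigl(\sum_{n=0}^l \hat{A}(n)\hat{B}(l-n)\Bigr)\mathbf{e}_l,
\end{equation*}
leaving the limiting argument implicit. Your step (i) and the regrouping by $l=m+n$ in step (ii) are exactly the paper's content, so there is no divergence in method. However, the specific estimate you invoke to justify the truncation is false: for partial Fourier sums one does \emph{not} have $\|A_J\|_{L^\infty}\leq \|A\|_{L^\infty}$. The truncation operator $S_J$ is convolution with the Dirichlet kernel $D_J$, whose $L^1$ norm grows like $\log J$, and there exist continuous (hence bounded) functions $f$ on $\mathbb{T}$ with $\sup_J\|S_Jf\|_{L^\infty}=+\infty$ (du Bois--Reymond). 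A monotone $L^\infty$ estimate holds for positive summability kernels (Fej\'er or Abel/Poisson means) but not for raw partial sums, so this step, as written, would fail.

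The good news is that you do not need any $L^\infty$ control on $A_J$. The $L^1$-convergence $A_J B_J\to AB$ already holds for \emph{arbitrary} $A,B\in L^2_+$ by Cauchy--Schwarz: write $A_JB_J - AB = (A_J - A)B_J + A(B_J - B)$, so that
\begin{equation*}
\|A_JB_J - AB\|_{L^1} \leq \|A_J - A\|_{L^2}\,\|B_J\|_{L^2} + \|A\|_{L^2}\,\|B_J - B\|_{L^2} \leq \|A_J - A\|_{L^2}\,\|B\|_{L^2} + \|A\|_{L^2}\,\|B_J - B\|_{L^2} \to 0,
\end{equation*}
since Fourier truncations converge in $L^2$ and have $L^2$-norm dominated by the original. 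Continuity of Fourier coefficients under $L^1$-convergence then gives $\widehat{AB}(l)=0$ for $l<0$. The essential boundedness hypothesis enters only in your step (i), to upgrade $AB$ from $L^1$ to $L^2$; with the Cauchy--Schwarz estimate replacing the faulty $L^\infty$ bound, the proof closes and agrees with the paper's.
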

\begin{proof}
If $A = \sum_{n \geq 0}\hat{A}(n)\mathbf{e}_n$, $B = \sum_{n \geq 0}\hat{B}(n)\mathbf{e}_n$, then $AB= \sum_{l \geq 0} \left(\sum_{n=0}^l \hat{A}(n)\hat{B}(l-n)\right)\mathbf{e}_l\in L^2_+.$
\end{proof}

\begin{prop}[Cauchy]\label{CauchyThmODE}
Let $\mathcal{E}$ be a Banach space, $\mathcal{I}$ is an open interval of $\mathbb{R}$ and $A \in C^0 (\mathcal{I}; \mathcal{B}(\mathcal{E}))$, if $(t_0, x_0) \in \mathcal{I} \times \mathcal{E}$, there exists a unique function $x \in C^1(\mathcal{I}; \mathcal{B}(\mathcal{E}))$ such that $x(t_0)=x_0$ and
\begin{equation}
\tfrac{\mathrm{d}}{\mathrm{d}t} x(t) = A(t)\left( x(t)\right), \quad \forall t \in \mathcal{I}.
\end{equation} 
\end{prop}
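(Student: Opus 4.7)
The plan is to reduce the linear Cauchy problem to a fixed-point equation and apply a weighted-norm Banach contraction argument on every compact subinterval, then patch the local solutions together to obtain a global one on $\mathcal{I}$. By the fundamental theorem of calculus in the Banach space $\mathcal{E}$, a function $x \in C^1(\mathcal{I}; \mathcal{E})$ solves the initial value problem if and only if it satisfies the integral equation
\begin{equation*}
x(t) = x_0 + \int_{t_0}^{t} A(s) \bigl( x(s) \bigr) \, \mathrm{d}s, \quad \forall t \in \mathcal{I},
\end{equation*}
so it suffices to produce a unique continuous solution of this integral equation.

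First I would fix an arbitrary compact subinterval $J \subset \mathcal{I}$ with $t_0 \in J$. Continuity of $A$ and compactness of $J$ yield a constant $M_J := \sup_{s \in J} \|A(s)\|_{\mathcal{B}(\mathcal{E})} < +\infty$. On the Banach space $C^0(J; \mathcal{E})$, I equip it with the weighted norm
\begin{equation*}
\|y\|_{\lambda} := \sup_{t \in J} e^{-\lambda |t - t_0|} \|y(t)\|_{\mathcal{E}},
\end{equation*}
for some $\lambda > M_J$ to be chosen, which is equivalent to the standard sup norm. Define the operator $\Phi$ on $C^0(J; \mathcal{E})$ by $\Phi(y)(t) = x_0 + \int_{t_0}^t A(s) y(s)\, \mathrm{d}s$. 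A direct computation using the bound $\|A(s) y(s)\|_{\mathcal{E}} \leq M_J e^{\lambda|s-t_0|} \|y\|_\lambda$ gives
\begin{equation*}
\|\Phi(y_1) - \Phi(y_2)\|_{\lambda} \leq \tfrac{M_J}{\lambda} \, \|y_1 - y_2\|_{\lambda},
\end{equation*}
so choosing $\lambda = 2 M_J$ makes $\Phi$ a strict contraction. The Banach fixed-point theorem then produces a unique $x_J \in C^0(J; \mathcal{E})$ solving the integral equation on $J$; since the right-hand side is an integral of a continuous function, $x_J$ is automatically $C^1$ on $J$, and $\frac{\mathrm{d}}{\mathrm{d}t} x_J(t) = A(t) (x_J(t))$.

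Finally I would globalize: if $J_1 \subset J_2$ are two such compact subintervals containing $t_0$, the restriction of $x_{J_2}$ to $J_1$ is also a fixed point of the analogous $\Phi$ on $J_1$ and thus coincides with $x_{J_1}$ by uniqueness. Writing $\mathcal{I} = \bigcup_n J_n$ as a countable nested union of compact subintervals containing $t_0$ and setting $x(t) = x_{J_n}(t)$ for any $n$ with $t \in J_n$ gives a well-defined element $x \in C^1(\mathcal{I}; \mathcal{E})$ with $x(t_0) = x_0$ that solves the ODE on every $J_n$, hence on all of $\mathcal{I}$. Uniqueness on $\mathcal{I}$ follows from the local uniqueness on each $J_n$. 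There is no real obstacle here; the only mild point to be careful about is that $\mathcal{I}$ is only assumed open, so the exhaustion by compact subintervals plus the use of the weighted norm is what allows the argument to proceed without any a priori global bound on $\|A(t)\|_{\mathcal{B}(\mathcal{E})}$.
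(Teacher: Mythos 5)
Your proof is correct: the reduction to the integral equation, the weighted-norm contraction on each compact subinterval containing $t_0$, the upgrade to $C^1$ regularity via the fundamental theorem of calculus, and the patching over an exhaustion of $\mathcal{I}$ by nested compacts all go through. The paper does not spell out a proof but simply cites Theorem 1.1.1 of Chemin's lecture notes, where essentially this same Picard--Banach fixed-point argument is carried out, so you have in effect reproduced the intended argument. (One small remark: the statement in the paper writes $x \in C^1(\mathcal{I}; \mathcal{B}(\mathcal{E}))$, which appears to be a typographical slip for $x \in C^1(\mathcal{I}; \mathcal{E})$; you correctly worked in $\mathcal{E}$.)
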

\begin{proof}
See Theorem 1.1.1 of Chemin \cite{CheminNoteEDPM2-2016}.
\end{proof}

\subsection{High regularity wellposedness}
\noindent If $s\geq \frac{1}{2}$, the proof of the $H^s$-global wellposedness of the matrix Szeg\H{o} equation follows directly the steps of section 2 of G\'erard--Grellier \cite{GGANNENS} and the following matrix inequalities.

\begin{lem}[Brezis--Gallou\"et \cite{Brezis-Gallouet1980}]
 If $s > \frac{1}{2}$, there exists a constant $\mathcal{C}_s > 0$ such that $\forall U \in H^s(\mathbb{T}; \mathbb{C}^{M \times N})$ for some $M,N \in \mathbb{N}_+$, the following inequality holds,
\begin{equation}\label{BGMatrixIne}
\|U\|_{L^{\infty}(\mathbb{T}; \mathbb{C}^{M \times N})}^2 \leq \mathcal{C}_s^2 \|U\|_{H^{\frac{1}{2}}(\mathbb{T}; \mathbb{C}^{M \times N})}^2 \ln \left(2 + \tfrac{\|U\|_{H^s(\mathbb{T}; \mathbb{C}^{M \times N})}}{\|U\|_{H^{\frac{1}{2}}(\mathbb{T}; \mathbb{C}^{M \times N})}} \right).
\end{equation}
\end{lem}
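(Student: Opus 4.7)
The plan is to reduce the matrix inequality to the scalar Brezis--Gallou\"et inequality (Brezis--Gallou\"et \cite{Brezis-Gallouet1980}) applied entrywise, then combine the resulting entrywise estimates via Jensen's inequality and Cauchy--Schwarz. Recall that in the scalar case, for $s>\frac{1}{2}$ there exists $C_s>0$ such that for every $u\in H^s(\mathbb{T};\mathbb{C})$ with $\|u\|_{H^{\frac{1}{2}}}>0$,
\begin{equation*}
\|u\|_{L^\infty(\mathbb{T};\mathbb{C})}^2 \leq C_s^2 \|u\|_{H^{\frac{1}{2}}(\mathbb{T};\mathbb{C})}^2 \ln\!\left(2+\tfrac{\|u\|_{H^s(\mathbb{T};\mathbb{C})}}{\|u\|_{H^{\frac{1}{2}}(\mathbb{T};\mathbb{C})}}\right).
\end{equation*}

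Given $U=(U_{kj})_{1\leq k\leq M,\,1\leq j\leq N}\in H^s(\mathbb{T};\mathbb{C}^{M\times N})$, set $b_{kj}=\|U_{kj}\|_{H^{\frac{1}{2}}}$, $a_{kj}=\|U_{kj}\|_{H^s}$, and write $b=\|U\|_{H^{\frac{1}{2}}}$, $a=\|U\|_{H^s}$, so that by \eqref{LpMat} we have $b^2=\sum_{k,j}b_{kj}^2$ and $a^2=\sum_{k,j}a_{kj}^2$. Entries with $b_{kj}=0$ contribute nothing to the left-hand side of the desired inequality (since then $U_{kj}=0$ and $\|U_{kj}\|_{L^\infty}=0$), so they can be discarded. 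On the set $\mathcal{J}$ of indices with $b_{kj}>0$, I would use the weights $\lambda_{kj}=b_{kj}^2/b^2$, which satisfy $\sum_{\mathcal{J}}\lambda_{kj}\leq 1$. Applying Jensen's inequality to the concave function $x\mapsto \ln(2+x)$ on $[0,\infty)$ gives
\begin{equation*}
\sum_{(k,j)\in\mathcal{J}}\lambda_{kj}\ln\!\left(2+\tfrac{a_{kj}}{b_{kj}}\right)\;\leq\;\ln\!\left(2+\sum_{(k,j)\in\mathcal{J}}\lambda_{kj}\tfrac{a_{kj}}{b_{kj}}\right)\;=\;\ln\!\left(2+\tfrac{1}{b^2}\sum_{(k,j)\in\mathcal{J}}b_{kj}a_{kj}\right),
\end{equation*}
and then the Cauchy--Schwarz inequality $\sum b_{kj}a_{kj}\leq ab$ yields $\sum_{k,j}b_{kj}^2\ln(2+a_{kj}/b_{kj})\leq b^2\ln(2+a/b)$.

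Combining this with the entrywise scalar Brezis--Gallou\"et estimates and summing over $k,j$ delivers
\begin{equation*}
\|U\|_{L^\infty}^2=\sum_{k,j}\|U_{kj}\|_{L^\infty}^2\leq C_s^2\sum_{k,j}b_{kj}^2\ln\!\left(2+\tfrac{a_{kj}}{b_{kj}}\right)\leq C_s^2\, b^2\ln\!\left(2+\tfrac{a}{b}\right),
\end{equation*}
which is \eqref{BGMatrixIne} with $\mathcal{C}_s=C_s$. The proof is essentially bookkeeping; the only mildly delicate point is ensuring that the reduction from entrywise estimates to the joint bound really does yield the logarithmic factor built out of the \emph{total} norms $\|U\|_{H^s}$ and $\|U\|_{H^{\frac{1}{2}}}$ rather than entrywise ratios, which is what the Jensen--Cauchy--Schwarz step accomplishes. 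The degenerate case $\|U\|_{H^{\frac{1}{2}}}=0$ is immediate since it forces $U\equiv 0$.
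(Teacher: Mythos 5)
Your proof is correct, and it takes a genuinely different route from the paper's. The paper does not apply the scalar Brezis--Gallou\"et estimate entrywise; instead it works directly at the matrix level: it selects a single Fourier cutoff $m\geq 1$ so that $m^{s-1/2}\sqrt{\ln(m+1)}\leq \|U\|_{H^s}/\|U\|_{H^{1/2}}\leq (m+1)^{s-1/2}\sqrt{\ln(m+2)}$, invokes the low/high-frequency splitting estimate of Appendix 2 of G\'erard--Grellier \cite{GGANNENS} (which, owing to the entrywise definitions \eqref{LpMat}, passes immediately to matrix-valued functions with a \emph{common} cutoff), and then optimizes in $m$. You instead apply the finished scalar inequality to each entry $U_{kj}$, obtaining entry-dependent logarithmic factors $\ln(2+a_{kj}/b_{kj})$, and then aggregate them by Jensen's inequality (concavity of $x\mapsto\ln(2+x)$, with weights $\lambda_{kj}=b_{kj}^2/b^2$ summing to $1$ once the trivial entries are discarded) followed by Cauchy--Schwarz to replace $\tfrac{1}{b^2}\sum b_{kj}a_{kj}$ by $a/b$. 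Both arguments are sound; the paper's is shorter because the Fourier-truncation bound is already linear in $\|U\|_{H^{1/2}}^2$ and $\|U\|_{H^s}^2$ and hence sums automatically, while yours is more modular in that it treats the scalar Brezis--Gallou\"et inequality as a black box and handles the nonlinearity of the logarithm by a clean convexity argument. One cosmetic remark: you write $\sum_{\mathcal{J}}\lambda_{kj}\leq 1$, but in fact $\sum_{\mathcal{J}}\lambda_{kj}=1$ since $b^2=\sum_{(k,j)\in\mathcal{J}}b_{kj}^2$; equality is what makes the Jensen step immediate (although the inequality version would also suffice here since $\ln(2+0)=\ln 2>0$).
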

\begin{proof}
If $U  \in H^s(\mathbb{T}; \mathbb{C}^{M \times N})\backslash \{0\}$ for some $s > \frac{1}{2}$, there exists $m\geq 1$ such that $m^{s - \frac{1}{2}} \sqrt{\ln(m+1)} \leq \tfrac{\|U\|_{H^s(\mathbb{T}; \mathbb{C}^{M \times N})}}{\|U\|_{H^{\frac{1}{2}}(\mathbb{T}; \mathbb{C}^{M \times N})}} \leq (m+1)^{s - \frac{1}{2}} \sqrt{\ln(m+2)}$. Set $\mathcal{A}_s := (s-\tfrac{1}{2})^{-1} +1$, so $(2+m^{s -\frac{1}{2}} \sqrt{\ln 2})^{2\mathcal{A}_s} \geq 2+m$. Appendix 2 in Page 805 of G\'erard--Grellier \cite{GGANNENS} yields that there exists $\mathcal{C}_s^{(1)}>0$ such that
\begin{equation*}
\begin{split}
& \|U\|_{L^{\infty}(\mathbb{T}; \mathbb{C}^{M \times N})}^2 \leq  \mathcal{C}_s^{(1)} \left(\|U\|_{H^{\frac{1}{2}}(\mathbb{T}; \mathbb{C}^{M \times N})}^2 \ln(2+m) + (m+1)^{1 -2 s } \|U\|_{H^{s}(\mathbb{T}; \mathbb{C}^{M \times N})}^2\right)\\
\leq & 4 \mathcal{A}_s \mathcal{C}_s^{(1)} \|U\|_{H^{\frac{1}{2}}(\mathbb{T})} \ln(2+m^{s -\frac{1}{2}} \sqrt{\ln 2}) \lesssim_s \|U\|_{H^{\frac{1}{2}}(\mathbb{T}; \mathbb{C}^{M \times N})} \ln \left(2+\tfrac{\|U\|_{H^s(\mathbb{T}; \mathbb{C}^{M \times N})}}{\|U\|_{H^{\frac{1}{2}}(\mathbb{T}; \mathbb{C}^{M \times N})}} \right).
\end{split}
\end{equation*}
\end{proof}
\begin{lem}[Trudinger \cite{Trudinger1967}]
There exists a universal constant $\mathcal{C} > 0$ such that $\forall U \in H^{\frac{1}{2}}(\mathbb{T}; \mathbb{C}^{M \times N})$ for some $M,N \in \mathbb{N}_+$, $\forall p \in [1, +\infty)$, the following inequality holds,
\begin{equation}\label{TruMatrixIne}
\|U\|_{L^p(\mathbb{T}; \mathbb{C}^{M \times N})}  \leq \mathcal{C} \sqrt{p} \|U\|_{H^{\frac{1}{2}}(\mathbb{T}; \mathbb{C}^{M \times N})}.
\end{equation}
\end{lem}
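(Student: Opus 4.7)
The plan is to reduce the matrix inequality to the known scalar Trudinger inequality on $\mathbb{T}$ entrywise, exploiting the crucial fact that both norms in \eqref{LpMat} are defined as the $\ell^2$-sum of scalar norms of the entries $U_{kj}$. This makes the reduction essentially bookkeeping rather than analysis.

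First I would recall the scalar Trudinger inequality on the torus: there exists a universal constant $\mathcal{C}>0$ such that for every $u \in H^{\frac{1}{2}}(\mathbb{T}; \mathbb{C})$ and every $p \in [1, +\infty)$,
\begin{equation*}
\|u\|_{L^p(\mathbb{T}; \mathbb{C})} \leq \mathcal{C}\sqrt{p}\, \|u\|_{H^{\frac{1}{2}}(\mathbb{T}; \mathbb{C})}.
\end{equation*}
This is the classical result of Trudinger \cite{Trudinger1967}; it is the same constant we will reuse, and no $M,N$-dependence is introduced.

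Next I would apply this scalar inequality to each entry $U_{kj}$ of $U = (U_{kj})_{1 \leq k \leq M, 1 \leq j \leq N}$, square both sides, and sum:
\begin{equation*}
\|U_{kj}\|_{L^p(\mathbb{T}; \mathbb{C})}^2 \leq \mathcal{C}^2 p\, \|U_{kj}\|_{H^{\frac{1}{2}}(\mathbb{T}; \mathbb{C})}^2, \qquad 1 \leq k \leq M,\; 1 \leq j \leq N.
\end{equation*}
Invoking the definitions \eqref{LpMat} gives
\begin{equation*}
\|U\|_{L^p(\mathbb{T}; \mathbb{C}^{M\times N})}^2 = \sum_{k=1}^M \sum_{j=1}^N \|U_{kj}\|_{L^p(\mathbb{T}; \mathbb{C})}^2 \leq \mathcal{C}^2 p \sum_{k=1}^M \sum_{j=1}^N \|U_{kj}\|_{H^{\frac{1}{2}}(\mathbb{T}; \mathbb{C})}^2 = \mathcal{C}^2 p\, \|U\|_{H^{\frac{1}{2}}(\mathbb{T}; \mathbb{C}^{M\times N})}^2,
\end{equation*}
and taking square roots yields \eqref{TruMatrixIne} with exactly the same constant $\mathcal{C}$ as in the scalar case.

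There is no genuine obstacle here: the only point worth flagging is that this works precisely because the author chose the $\ell^2$-aggregation convention for entries in both $L^p$ and $H^{1/2}$ norms in \eqref{LpMat}, so the inequality is preserved by summing squared scalar inequalities. Had one instead used, say, the operator norm pointwise in $x$, an additional step comparing matrix norms with the Frobenius norm would be required (costing at most a dimension-dependent constant), but under the convention adopted in this paper the reduction is direct.
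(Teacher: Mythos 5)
Your proof is correct and is exactly the argument the paper has in mind: the paper's one-line proof, ``plug Appendix 3 of G\'erard--Grellier into \eqref{LpMat},'' is precisely the scalar Trudinger bound applied entrywise and then summed in $\ell^2$ using the convention \eqref{LpMat}. You have simply written out the bookkeeping the paper leaves implicit.
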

\begin{proof}
It is enough to plug Appendix 3 of G\'erard--Grellier \cite{GGANNENS}  into \eqref{LpMat}.
\end{proof}
 
\subsection{The Hamiltonian formalism}
\noindent The inner product of $\mathbb{C}$-Hilbert space $L^2_+(\mathbb{T}; \mathbb{C}^{M \times N})$ provides the canonical symplectic structure
\begin{equation}\label{SymplecticForm}
\omega(F, G):= \mathrm{Im}\langle F, G \rangle_{L^2(\mathbb{T}; \mathbb{C}^{M \times N})} = \mathrm{Im}\int_0^{2\pi} \mathrm{tr}(F(x) G(x)^*) \tfrac{\mathrm{d}x}{2\pi}, \quad \forall F, G \in L^2_+(\mathbb{T}; \mathbb{C}^{M \times N}),
\end{equation}because the mapping $\Upsilon^{\omega}: F \in L^2_+(\mathbb{T}; \mathbb{C}^{M \times N}) \mapsto F \lrcorner \omega \in \mathcal{B}(L^2_+(\mathbb{T}; \mathbb{C}^{M \times N}); \mathbb{R})$ is invertible, thanks to Riesz--Fr\'echet theorem. Given any smooth function $f: L^2_+(\mathbb{T}; \mathbb{C}^{M \times N}) \to \mathbb{R}$, its Fr\'echet derivative is denoted by $\nabla_U f$, its Hamiltonian vector field is given by $X_f:=-\left(\Upsilon^{\omega}\right)^{-1}(\mathrm{d}f)$, i.e.
\begin{equation}\label{HamVecFie}
\mathrm{d}f(U)(F)= \mathrm{Re}\langle F, \nabla_U f(U) \rangle_{L^2(\mathbb{T}; \mathbb{C}^{M \times N})} = \omega (F, X_f(U)), \quad \forall F, U\in L^2_+(\mathbb{T}; \mathbb{C}^{M \times N}).
\end{equation}Given another smooth function $g: L^2_+(\mathbb{T}; \mathbb{C}^{M \times N}) \to \mathbb{R}$, the Poisson bracket between $f$ and $g$ is given by 
\begin{equation}\label{PoisBrac}
\{f,g\}(U):= \omega(X_f(U), X_g(U))  = \mathrm{Im}\langle \nabla_U f(U), \nabla_U g(U) \rangle_{L^2(\mathbb{T}; \mathbb{C}^{M \times N})}, \quad \forall U\in L^2_+(\mathbb{T}; \mathbb{C}^{M \times N}).
\end{equation}Then  \eqref{MSzego} has the following Hamiltonian formalism,
\begin{equation}
\partial_t U(t)=-i \Pi_{\geq 0}(U(t) U(t)^* U(t))= X_{\mathbf{E}}(U(t)), \quad U(t) \in H^s_+(\mathbb{T}; \mathbb{C}^{M \times N}),  \quad s> \tfrac{1}{2},
\end{equation}where the energy functional $\mathbf{E}:   L^4_+(\mathbb{T}; \mathbb{C}^{M \times N}):= \Pi_{\geq 0}(L^4 (\mathbb{T}; \mathbb{C}^{M \times N})) \to \mathbb{R}$ is given by
\begin{equation}\label{EnergyFunc}
 \mathbf{E}(U):= \frac{1}{8\pi}\int_0^{2\pi} \mathrm{tr}\left( \left(U(x) U(x)^*\right)^2\right) \mathrm{d}x = \frac{1}{4}\|U U^*\|^2_{L^2(\mathbb{T}; \mathbb{C}^{M \times M})}, \quad \forall U \in L^4_+(\mathbb{T}; \mathbb{C}^{M \times N}).
\end{equation}The matrix Szeg\H{o} equation \eqref{MSzego} is invariant under both phase rotation and space translation, leading the following conservation laws by Noether's theorem,
\begin{equation}\label{ChargeMomentum}
\mathbf{q}(U)= \|U\|^2_{L^2_+(\mathbb{T}; \mathbb{C}^{M \times N})}, \quad  \mathfrak{j} (U)= \||\mathrm{D}|^{\frac{1}{2}} U\|^2_{L^2_+(\mathbb{T}; \mathbb{C}^{M \times N})} = -i\langle \partial_x U, U \rangle_{L^2_+(\mathbb{T}; \mathbb{C}^{M \times N})}.  
\end{equation}We have $\{\mathbf{q}, \mathbf{E}\} = \{\mathbf{q},  \mathfrak{j}\}= \{\mathbf{E},  \mathfrak{j}\}=0$ on $H^1_+(\mathbb{T}; \mathbb{C}^{M \times N})$.

\subsection{The Poisson integral and shift operators}
Given $n \in \mathbb{Z}$, we recall that
\begin{equation}\label{expfunTorus}
\mathbf{e}_n : x \in \mathbb{T} \mapsto e^{inx} \in \mathbb{C}.
\end{equation}Let $\mathbb{E}_{kj}^{(M N)} \in \mathbb{C}^{M \times N}$ denotes the $M\times N$ matrix whose $kj$-entry is $1$ and the other entries are all $0$. Given any $F=(F_{st})_{1\leq s \leq M, \; 1\leq t \leq N} \in L^1(\mathbb{T}; \mathbb{C}^{M\times N})$, we have $\langle F, \; \mathbb{E}_{kj}^{(M N)}\rangle_{L^2(\mathbb{T}; \mathbb{C}^{M\times N})} =\hat{F}_{kj}(0)$ and 
\begin{equation}\label{I<>formula}
\mathbf{I}(F) = \sum_{k=1}^M \sum_{j=1}^N \langle F, \; \mathbb{E}_{kj}^{(M N)}\rangle_{L^2(\mathbb{T}; \mathbb{C}^{M\times N})}\mathbb{E}_{kj}^{(M N)} \in \mathbb{C}^{M\times N}.
\end{equation}If $s \geq 0$, both the shift operator $\mathbf{S}$ and its adjoint $\mathbf{S}^*$ are bounded operators on $H^s_+(\mathbb{T}; \mathbb{C}^{M \times N})$. In fact, 
\begin{equation}
\|\mathbf{S}\|_{B\left(L^2_+(\mathbb{T}; \mathbb{C}^{M \times N})\right)} = 1, \qquad \|\mathbf{S}^*\|_{B\left( H^s_+(\mathbb{T}; \mathbb{C}^{M \times N})\right)} \leq 1,\quad \forall s \geq 0.
\end{equation}Then we have 
\begin{equation}\label{inverseSS*}
\mathbf{S}^* \mathbf{S} = \mathrm{id}_{L^2_+(\mathbb{T}; \mathbb{C}^{M \times N})}, \quad \mathbf{S} \mathbf{S}^*  = \mathrm{id}_{L^2_+(\mathbb{T}; \mathbb{C}^{M \times N})} - \mathbf{I}.
\end{equation}If $A \in  L^2 (\mathbb{T}; \mathbb{C}^{M\times N})$, then $\mathbf{e}_{-1} \Pi_{<0}A \in L^2_- (\mathbb{T}; \mathbb{C}^{M\times N})$. If $F= \sum_{n \geq 0}\hat{F}(n)\mathbf{e}_n \in L^2_+(\mathbb{T}; \mathbb{C}^{M\times N})$, then we use mathematical induction to deduce that 
\begin{equation}\label{S*mPower}
\left( \mathbf{S}^*\right)^m  (F)= \Pi_{\geq 0} \left( \mathbf{e}_{-m} F \right), \quad \hat{F}(m)=\mathbf{I}\left(\left( \mathbf{S}^*\right)^m  (F) \right),\quad \forall m \in \mathbb{N}. 
\end{equation}If $z=r e^{i\theta}$ for some $r \in [0, 1)$, $\theta \in \mathbb{T}$, the Poisson integral of $F= \sum_{n \geq 0}\hat{F}(n)\mathbf{e}_n \in L^2_+(\mathbb{T}; \mathbb{C}^{M\times N})$ is given by
\begin{equation}\label{PoiInt}
\underline{F}(z) = \mathscr{P}[F](r, \theta) : = \frac{1}{2\pi} \int_{-\pi}^{\pi}\mathbf{p}_r(\theta -x)F(x) \mathrm{d}x =\sum_{n\geq 0}z^n \hat{F}(n) \in \mathbb{C},
\end{equation}where $\mathbf{p}_r(x)=\frac{1-r^2}{1-2r \cos(x) +r^2}=\sum_{n \in \mathbb{Z}}r^{|n|}e^{i n x}$, $\forall  x \in \mathbb{T}$, denotes the Poisson kernel on the torus. Then Theorem 11.16 of Rudin  \cite{RudinRACA} yields that $\mathscr{P}[F](r) =  \sum_{n\geq 0}r^n \hat{F}(n) \mathbf{e}_n \in L^2_+(\mathbb{T}; \mathbb{C}^{M\times N})$ and
\begin{equation}\label{L2convPoiInt}
\sup_{0 \leq r <1}\|\mathscr{P}[F](r)\|_{L^2_+(\mathbb{T}; \mathbb{C}^{M\times N})} \leq \|F\|_{L^2_+}, \quad  \lim_{r \to 1^-}\|\mathscr{P}[F](r) -F\|_{L^2_+(\mathbb{T}; \mathbb{C}^{M\times N})}=0.
\end{equation}In addition, if $U= \sum_{n \geq 0}\hat{U}(n)\mathbf{e}_n \in C^0 \bigcap L^2_+(\mathbb{T}; \mathbb{C}^{M\times N})$, then we have  
\begin{equation}\label{LinftyconvPoiInt}
\sup_{0 \leq r <1}\|\mathscr{P}[F](r)\|_{L^{\infty} (\mathbb{T}; \mathbb{C}^{M\times N})} \leq \|F\|_{L^{\infty}}, \quad  \lim_{r \to 1^-}\|\mathscr{P}[F](r) -F\|_{L^{\infty}(\mathbb{T}; \mathbb{C}^{M\times N})}=0.
\end{equation}by Theorem 11.8 and 11.16 of Rudin  \cite{RudinRACA}. 

\begin{lem}
Given any $M,N\in\mathbb{N}_+$ and $z\in \mathbb{C}$ such that $|z| <1$, if $U \in L^2_+(\mathbb{T}; \mathbb{C}^{M\times N})$, then
\begin{equation}\label{InvForTorus}
\underline{U}(z) = \mathbf{I} \left( (\mathrm{id}_{L^2_+(\mathbb{T}; \mathbb{C}^{M \times N})}- z \mathbf{S}^* )^{-1} U \right).
\end{equation}
\end{lem}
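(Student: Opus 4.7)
The plan is to expand the resolvent $(\mathrm{id}_{L^2_+(\mathbb{T}; \mathbb{C}^{M \times N})} - z\mathbf{S}^*)^{-1}$ as a Neumann series and identify it term-by-term with the Poisson integral, using the Fourier recovery formula \eqref{S*mPower}.

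First I would note that since $\|\mathbf{S}^*\|_{\mathcal{B}(L^2_+(\mathbb{T}; \mathbb{C}^{M \times N}))} \leq 1$ and $|z| < 1$, the operator $\mathrm{id}_{L^2_+(\mathbb{T}; \mathbb{C}^{M \times N})} - z \mathbf{S}^*$ is invertible on $L^2_+(\mathbb{T}; \mathbb{C}^{M \times N})$, and its inverse is given by the convergent Neumann series
\begin{equation*}
(\mathrm{id}_{L^2_+(\mathbb{T}; \mathbb{C}^{M \times N})} - z\mathbf{S}^*)^{-1} = \sum_{m \geq 0} z^m (\mathbf{S}^*)^m
\end{equation*}
in $\mathcal{B}(L^2_+(\mathbb{T}; \mathbb{C}^{M \times N}))$.

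Next I would apply this identity to $U \in L^2_+(\mathbb{T}; \mathbb{C}^{M\times N})$, obtaining the $L^2_+$-convergent series $(\mathrm{id}_{L^2_+(\mathbb{T}; \mathbb{C}^{M \times N})} - z\mathbf{S}^*)^{-1} U = \sum_{m \geq 0} z^m (\mathbf{S}^*)^m U$. Since the integration operator $\mathbf{I}$ is continuous from $L^2_+(\mathbb{T}; \mathbb{C}^{M\times N})$ to $\mathbb{C}^{M\times N}$ (by Cauchy--Schwarz, $\|\mathbf{I}(F)\|  \leq \|F\|_{L^1} \leq \|F\|_{L^2}$ entrywise), I can exchange $\mathbf{I}$ with the sum:
\begin{equation*}
\mathbf{I}\left((\mathrm{id}_{L^2_+(\mathbb{T}; \mathbb{C}^{M \times N})} - z\mathbf{S}^*)^{-1} U\right) = \sum_{m \geq 0} z^m \,\mathbf{I}\bigl((\mathbf{S}^*)^m U\bigr).
\end{equation*}

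Finally, the second identity in \eqref{S*mPower} gives $\mathbf{I}((\mathbf{S}^*)^m U) = \hat{U}(m)$ for every $m \in \mathbb{N}$, so the right-hand side becomes $\sum_{m \geq 0} z^m \hat{U}(m)$, which is exactly $\underline{U}(z)$ by the defining formula \eqref{PoiInt}. There is no real obstacle here: the only points requiring any care are the convergence of the Neumann series (granted by $|z|<1$ and $\|\mathbf{S}^*\|\leq 1$) and the continuity of $\mathbf{I}$ needed to justify the termwise application, both of which are immediate.
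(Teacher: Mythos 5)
Your proof is correct and follows essentially the same route as the paper's: both rely on the Neumann series expansion of $(\mathrm{id} - z\mathbf{S}^*)^{-1}$ (the paper cites Theorem 18.3 of Rudin for this), the Fourier recovery identity $\hat{U}(m) = \mathbf{I}((\mathbf{S}^*)^m U)$ from \eqref{S*mPower}, and the definition \eqref{PoiInt} of the Poisson integral. The only cosmetic difference is that you start from the right-hand side and the paper from the left, and you spell out the continuity of $\mathbf{I}$ justifying the term-by-term interchange, which the paper leaves implicit.
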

\begin{proof}
If $U= \sum_{n \geq 0}\hat{U}(n)\mathbf{e}_n \in  L^2_+(\mathbb{T}; \mathbb{C}^{M\times N})$, formulas \eqref{PoiInt}, \eqref{S*mPower} and Theorem 18.3 of Rudin \cite{RudinRACA} yield that $\underline{U}(z) = \sum_{n\geq 0}  \mathbf{I}\left(\left( z\mathbf{S}^*\right)^m  (U) \right) =\mathbf{I} ( (\mathrm{id}_{L^2_+(\mathbb{T}; \mathbb{C}^{M \times N})}- z \mathbf{S}^* )^{-1} U  )$.
\end{proof}

\section{The Lax pair structure}\label{SecLaxP}
\noindent This section is devoted to proving theorem $\ref{LaxPairThm}$.

\subsection{The Hankel operators}
\noindent Given $d, M,N\in\mathbb{N}_+$, recall that the Hankel operators of symbol $U \in H^{\frac{1}{2}}_+(\mathbb{T}; \mathbb{C}^{M\times N})$ are given by
\begin{equation}\label{rlHankel}
\begin{split}
& \mathbf{H}^{(\mathbf{r})}_{U} : F \in L^2_+(\mathbb{T}; \mathbb{C}^{d \times N}) \mapsto \mathbf{H}^{(\mathbf{r})}_{U}(F) = \Pi_{\geq 0}(U F^*) \in L^2_+(\mathbb{T}; \mathbb{C}^{M \times d});\\
 & \mathbf{H}^{(\mathbf{l})}_{U} : G \in L^2_+(\mathbb{T}; \mathbb{C}^{M \times d}) \mapsto \mathbf{H}^{(\mathbf{l})}_{U}(G) = \Pi_{\geq 0}(G^* U) \in L^2_+(\mathbb{T}; \mathbb{C}^{d \times N}).\\
\end {split}
\end{equation}If $F=\sum_{j=1}^d \sum_{n=1}^N F_{jn}\mathbb{E}^{(dN)}_{jn} \in  L^2_+(\mathbb{T}; \mathbb{C}^{d \times N})$ and $G=\sum_{m=1}^M \sum_{k=1}^d G_{mk}\mathbb{E}^{(Md)}_{mk} \in  L^2_+(\mathbb{T}; \mathbb{C}^{M \times d})$, then
\begin{equation}\label{relHrHltoH}
\mathbf{H}^{(\mathbf{r})}_{U}(F) = \sum_{k=1}^M  \sum_{j=1}^d (\sum_{n=1}^N H_{U_{kn}}(F_{jn}))\mathbb{E}^{(Md)}_{kj}; \quad \mathbf{H}^{(\mathbf{l})}_{U}(G) = \sum_{k=1}^d  \sum_{j=1}^N (\sum_{m=1}^M H_{U_{mj}}(G_{mk}))\mathbb{E}^{(dN)}_{kj}. 
\end{equation}Both $\mathbf{H}^{(\mathbf{r})}_{U}: L^2_+(\mathbb{T}; \mathbb{C}^{d \times N}) \to L^2_+(\mathbb{T}; \mathbb{C}^{M \times d})$ and $\mathbf{H}^{(\mathbf{l})}_{U}: L^2_+(\mathbb{T}; \mathbb{C}^{M \times d}) \to  L^2_+(\mathbb{T}; \mathbb{C}^{d \times N})$ are $\mathbb{C}$-antilinear Hilbert--Schmidt operators by formula $(12)$ in page 771 of G\'erard--Grellier \cite{GGANNENS}. Precisely,  we have
\begin{equation}\label{TrHrHl}
\mathrm{Tr}(\mathbf{H}^{(\mathbf{r})}_{U} \mathbf{H}^{(\mathbf{l})}_{U}) = \mathrm{Tr}(\mathbf{H}^{(\mathbf{l})}_{U} \mathbf{H}^{(\mathbf{r})}_{U}) = \|\mathbf{H}^{(\mathbf{r})}_{U}\|_{\mathrm{HS}}^2 =\|\mathbf{H}^{(\mathbf{l})}_{U}\|_{\mathrm{HS}}^2 = d \|\sqrt{1+|\mathrm{D}|}U\|_{L^2_+(\mathbb{T}; \mathbb{C}^{M\times N})}^2.
\end{equation}In addition, for any $F \in L^2_+(\mathbb{T}; \mathbb{C}^{d \times N})$ and $G \in L^2_+(\mathbb{T}; \mathbb{C}^{M \times d})$, we have
\begin{equation}\label{HrHlinnprod}
\langle  \mathbf{H}^{(\mathbf{r})}_{U}(F), G \rangle_{L^2_+(\mathbb{T}; \mathbb{C}^{M \times d})} =\frac{1}{2\pi} \int_0^{2\pi}\mathrm{tr}\left(F(x)^* G(x)^* U(x) \right)\mathrm{d}x = \langle  \mathbf{H}^{(\mathbf{l})}_{U}(G), F \rangle_{L^2_+(\mathbb{T}; \mathbb{C}^{d \times N})}.
\end{equation}The following lemma is a direct consequence of formula \eqref{HrHlinnprod}.
\begin{lem}\label{LemofOrtho}
Given $M,N,d\in\mathbb{N}_+$ and $U \in H^{\frac{1}{2}}_+(\mathbb{T}; \mathbb{C}^{M\times N})$, we have
\begin{equation}\label{OrthoComple}
\begin{split}
& \mathrm{Ker} \mathbf{H}^{(\mathbf{l})}_{U} \mathbf{H}^{(\mathbf{r})}_{U}=\mathrm{Ker} \mathbf{H}^{(\mathbf{r})}_{U} =  (\mathrm{Im} \mathbf{H}^{(\mathbf{l})}_{U} )^{\perp} = (\mathrm{Im} \mathbf{H}^{(\mathbf{l})}_{U} \mathbf{H}^{(\mathbf{r})}_{U} )^{\perp} \subset  L^2_+(\mathbb{T}; \mathbb{C}^{d \times N}); \\
& \mathrm{Ker} \mathbf{H}^{(\mathbf{r})}_{U} \mathbf{H}^{(\mathbf{l})}_{U} =\mathrm{Ker} \mathbf{H}^{(\mathbf{l})}_{U} = (\mathrm{Im} \mathbf{H}^{(\mathbf{r})}_{U}  )^{\perp}  = (\mathrm{Im} \mathbf{H}^{(\mathbf{r})}_{U} \mathbf{H}^{(\mathbf{l})}_{U} )^{\perp}  \subset  L^2_+(\mathbb{T}; \mathbb{C}^{M \times d}).
\end{split}
\end{equation}As a consequence,  $L^2_+(\mathbb{T}; \mathbb{C}^{d \times N}) = \mathrm{Ker} \mathbf{H}^{(\mathbf{r})}_{U} \bigoplus \overline{\mathrm{Im} \mathbf{H}^{(\mathbf{l})}_{U}} = \mathrm{Ker} \mathbf{H}^{(\mathbf{l})}_{U} \mathbf{H}^{(\mathbf{r})}_{U} \bigoplus \overline{\mathrm{Im} \mathbf{H}^{(\mathbf{l})}_{U}\mathbf{H}^{(\mathbf{r})}_{U}}$ and $L^2_+(\mathbb{T}; \mathbb{C}^{M \times d}) = \mathrm{Ker} \mathbf{H}^{(\mathbf{l})}_{U} \bigoplus \overline{\mathrm{Im} \mathbf{H}^{(\mathbf{r})}_{U}} =  \mathrm{Ker} \mathbf{H}^{(\mathbf{r})}_{U} \mathbf{H}^{(\mathbf{l})}_{U} \bigoplus \overline{\mathrm{Im} \mathbf{H}^{(\mathbf{r})}_{U}\mathbf{H}^{(\mathbf{l})}_{U}}$. Furthermore, the restrictions $\mathbf{H}^{(\mathbf{r})}_{U}\big|_{\mathrm{Im} \mathbf{H}^{(\mathbf{l})}_{U}}: \mathrm{Im} \mathbf{H}^{(\mathbf{l})}_{U} \to \mathrm{Im} \mathbf{H}^{(\mathbf{r})}_{U}\mathbf{H}^{(\mathbf{l})}_{U}$ and $\mathbf{H}^{(\mathbf{l})}_{U}\big|_{\mathrm{Im} \mathbf{H}^{(\mathbf{r})}_{U}}: \mathrm{Im} \mathbf{H}^{(\mathbf{r})}_{U} \to \mathrm{Im} \mathbf{H}^{(\mathbf{l})}_{U}\mathbf{H}^{(\mathbf{r})}_{U}$ are both injective.
\end{lem}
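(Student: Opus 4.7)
The plan is to derive every statement of the lemma from the single duality identity \eqref{HrHlinnprod}, which plays the role of an adjoint relation for the $\mathbb{C}$-antilinear operators $\mathbf{H}^{(\mathbf{r})}_U$ and $\mathbf{H}^{(\mathbf{l})}_U$. The argument reduces to the scalar Hankel operator identities of G\'erard--Grellier in the case $\mathbf{H}^{(\mathbf{l})}_U = \mathbf{H}^{(\mathbf{r})}_U$, and only needs to be transcribed while keeping track of the two distinct target spaces $L^2_+(\mathbb{T}; \mathbb{C}^{d\times N})$ and $L^2_+(\mathbb{T}; \mathbb{C}^{M\times d})$.

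First I would establish the basic orthogonality $\mathrm{Ker}\,\mathbf{H}^{(\mathbf{r})}_U = (\mathrm{Im}\,\mathbf{H}^{(\mathbf{l})}_U)^\perp$: a vector $F \in L^2_+(\mathbb{T}; \mathbb{C}^{d\times N})$ is orthogonal to $\mathrm{Im}\,\mathbf{H}^{(\mathbf{l})}_U$ if and only if $\langle \mathbf{H}^{(\mathbf{l})}_U G, F\rangle_{L^2_+} = 0$ for every $G \in L^2_+(\mathbb{T}; \mathbb{C}^{M\times d})$, and by \eqref{HrHlinnprod} this is equivalent to $\langle \mathbf{H}^{(\mathbf{r})}_U F, G\rangle_{L^2_+} = 0$ for every such $G$, i.e.\ to $\mathbf{H}^{(\mathbf{r})}_U F = 0$. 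Swapping the roles of $F$ and $G$ yields the symmetric identity $\mathrm{Ker}\,\mathbf{H}^{(\mathbf{l})}_U = (\mathrm{Im}\,\mathbf{H}^{(\mathbf{r})}_U)^\perp$.

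Next I would collapse the double operators onto the single ones. Since $\mathbf{H}^{(\mathbf{l})}_U \mathbf{H}^{(\mathbf{r})}_U$ is a composition of two $\mathbb{C}$-antilinear maps it is $\mathbb{C}$-linear, and the key observation is that specializing \eqref{HrHlinnprod} to $G = \mathbf{H}^{(\mathbf{r})}_U F$ gives
\begin{equation*}
\langle \mathbf{H}^{(\mathbf{l})}_U \mathbf{H}^{(\mathbf{r})}_U F, F\rangle_{L^2_+} = \|\mathbf{H}^{(\mathbf{r})}_U F\|_{L^2_+}^2.
\end{equation*}
Hence $\mathrm{Ker}\,\mathbf{H}^{(\mathbf{l})}_U \mathbf{H}^{(\mathbf{r})}_U \subseteq \mathrm{Ker}\,\mathbf{H}^{(\mathbf{r})}_U$; the reverse inclusion is immediate. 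Moreover, if $F \perp \mathrm{Im}\,\mathbf{H}^{(\mathbf{l})}_U \mathbf{H}^{(\mathbf{r})}_U$, then in particular $\langle \mathbf{H}^{(\mathbf{l})}_U \mathbf{H}^{(\mathbf{r})}_U F, F\rangle_{L^2_+} = 0$, which forces $\mathbf{H}^{(\mathbf{r})}_U F = 0$; combined with the trivial inclusion $\mathrm{Im}\,\mathbf{H}^{(\mathbf{l})}_U \mathbf{H}^{(\mathbf{r})}_U \subseteq \mathrm{Im}\,\mathbf{H}^{(\mathbf{l})}_U$ this yields $(\mathrm{Im}\,\mathbf{H}^{(\mathbf{l})}_U \mathbf{H}^{(\mathbf{r})}_U)^\perp = \mathrm{Ker}\,\mathbf{H}^{(\mathbf{r})}_U$. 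Interchanging the left/right labels handles the second line of \eqref{OrthoComple}.

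The direct sum decompositions are then the standard Hilbert space identity $H = W \oplus W^\perp$ applied to the closed subspaces $\mathrm{Ker}\,\mathbf{H}^{(\mathbf{r})}_U$ and $\mathrm{Ker}\,\mathbf{H}^{(\mathbf{l})}_U\mathbf{H}^{(\mathbf{r})}_U$, together with the double-perp identity $((\mathrm{Im}\,A)^\perp)^\perp = \overline{\mathrm{Im}\,A}$. For the injectivity of the restriction $\mathbf{H}^{(\mathbf{r})}_U\big|_{\mathrm{Im}\,\mathbf{H}^{(\mathbf{l})}_U}$, any $h \in \mathrm{Im}\,\mathbf{H}^{(\mathbf{l})}_U$ with $\mathbf{H}^{(\mathbf{r})}_U h = 0$ sits both in $\mathrm{Ker}\,\mathbf{H}^{(\mathbf{r})}_U = (\overline{\mathrm{Im}\,\mathbf{H}^{(\mathbf{l})}_U})^\perp$ and in $\overline{\mathrm{Im}\,\mathbf{H}^{(\mathbf{l})}_U}$, so $h = 0$; the symmetric argument handles $\mathbf{H}^{(\mathbf{l})}_U\big|_{\mathrm{Im}\,\mathbf{H}^{(\mathbf{r})}_U}$. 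I do not anticipate a substantive obstacle: the whole lemma is an exercise in exploiting the single duality formula \eqref{HrHlinnprod}, and the only mild subtlety is keeping the two ambient spaces $L^2_+(\mathbb{T};\mathbb{C}^{d\times N})$ and $L^2_+(\mathbb{T};\mathbb{C}^{M\times d})$ straight as the operators swap them.
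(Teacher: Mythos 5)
Your proof is correct and takes the same approach as the paper, which simply asserts that the lemma is ``a direct consequence of formula \eqref{HrHlinnprod}''; you have spelled out exactly the consequences the paper has in mind, using the antilinear adjoint relation \eqref{HrHlinnprod} first to identify $\mathrm{Ker}\,\mathbf{H}^{(\mathbf{r})}_U$ with $(\mathrm{Im}\,\mathbf{H}^{(\mathbf{l})}_U)^\perp$, then the positivity identity $\langle \mathbf{H}^{(\mathbf{l})}_U\mathbf{H}^{(\mathbf{r})}_U F,F\rangle = \|\mathbf{H}^{(\mathbf{r})}_U F\|^2$ to collapse the kernels and images of the double operators onto those of the single ones.
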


\noindent Recall that the left and right shifted Hankel operators of the symbol $U \in H^{\frac{1}{2}}_+(\mathbb{T}; \mathbb{C}^{M\times N})$ are given by
\begin{equation}\label{rlShiftHankel}
\begin{split}
& \mathbf{K}^{(\mathbf{r})}_{U} = \mathbf{H}^{(\mathbf{r})}_{U}\mathbf{S} =   \mathbf{S}^* \mathbf{H}^{(\mathbf{r})}_{U} =\mathbf{H}^{(\mathbf{r})}_{\mathbf{S}^*  U} : F \in L^2_+(\mathbb{T}; \mathbb{C}^{d \times N}) \mapsto  \Pi_{\geq 0}\left( \mathbf{e}_{-1} UF^*\right)\in L^2_+(\mathbb{T}; \mathbb{C}^{M \times d});\\
& \mathbf{K}^{(\mathbf{l})}_{U} = \mathbf{H}^{(\mathbf{l})}_{U}\mathbf{S} =  \mathbf{S}^* \mathbf{H}^{(\mathbf{l})}_{U} =\mathbf{H}^{(\mathbf{l})}_{\mathbf{S}^*  U} : G \in L^2_+(\mathbb{T}; \mathbb{C}^{M \times d}) \mapsto  \Pi_{\geq 0}\left( \mathbf{e}_{-1} G^* U \right)\in L^2_+(\mathbb{T}; \mathbb{C}^{d \times N}).\\
\end{split}
\end{equation}We have
\begin{equation}\label{KrKlinnprod}
\langle  \mathbf{K}^{(\mathbf{r})}_{U}(F), G \rangle_{L^2_+(\mathbb{T}; \mathbb{C}^{M \times d})} =\langle  \mathbf{H}^{(\mathbf{r})}_{\mathbf{S}^* U}(F), G \rangle_{L^2_+(\mathbb{T}; \mathbb{C}^{M \times d})} = \langle  \mathbf{H}^{(\mathbf{l})}_{\mathbf{S}^* U}(G), F \rangle_{L^2_+(\mathbb{T}; \mathbb{C}^{d \times N})} = \langle  \mathbf{K}^{(\mathbf{l})}_{U}(G), F \rangle_{L^2_+(\mathbb{T}; \mathbb{C}^{d \times N})}.
\end{equation}Then $\mathrm{Ker}( \mathbf{H}^{(\mathbf{r})}_{U})$ and  $\mathrm{Ker}( \mathbf{H}^{(\mathbf{l})}_{U})$ are $\mathbf{S}$-invariant, $\mathrm{Im}( \mathbf{H}^{(\mathbf{r})}_{U})$ and  $\mathrm{Im}( \mathbf{H}^{(\mathbf{l})}_{U})$ are $\mathbf{S}^*$-invariant, i.e. 
\begin{equation}
\begin{split}
& \mathbf{S}\left(\mathrm{Ker}( \mathbf{H}^{(\mathbf{r})}_{U}) \right) \subset \mathrm{Ker}( \mathbf{H}^{(\mathbf{r})}_{U}) \subset L^2_+(\mathbb{T}; \mathbb{C}^{d \times N} ); \quad \mathbf{S}\left(\mathrm{Ker}( \mathbf{H}^{(\mathbf{l})}_{U}) \right) \subset \mathrm{Ker}( \mathbf{H}^{(\mathbf{l})}_{U}) \subset L^2_+(\mathbb{T}; \mathbb{C}^{M \times d} );  \\
& \mathbf{S}^*\left(\mathrm{Im}( \mathbf{H}^{(\mathbf{r})}_{U}) \right) \subset \mathrm{Im}( \mathbf{H}^{(\mathbf{r})}_{U}) \subset L^2_+(\mathbb{T}; \mathbb{C}^{M \times d} ); \quad \mathbf{S}^*\left(\mathrm{Im}( \mathbf{H}^{(\mathbf{l})}_{U}) \right) \subset \mathrm{Im}( \mathbf{H}^{(\mathbf{l})}_{U}) \subset L^2_+(\mathbb{T}; \mathbb{C}^{d \times N} ). 
\end{split}
\end{equation}Given any positive integers $d, M,N, P, Q\in\mathbb{N}_+$, we choose $U \in H^{\frac{1}{2}}_+(\mathbb{T}; \mathbb{C}^{M \times N})$, $V \in H^{\frac{1}{2}}_+(\mathbb{T}; \mathbb{C}^{P \times N})$ and $W \in H^{\frac{1}{2}}_+(\mathbb{T}; \mathbb{C}^{P\times Q})$. For any $x\in \mathbb{T}$, the matrices $U(x) \in \mathbb{C}^{M \times N}$ and  $V(x) \in \mathbb{C}^{P \times N}$ have the same number of columns, the $(\mathbf{rl})$-double Hankel operators are given by
\begin{equation}\label{HrHldef}
\begin{split}
\mathbf{H}^{(\mathbf{r})}_{U} \mathbf{H}^{(\mathbf{l})}_{V}: G_1 \in L^2_+(\mathbb{T}; \mathbb{C}^{P \times d}) \mapsto \mathbf{H}^{(\mathbf{r})}_{U} \mathbf{H}^{(\mathbf{l})}_{V}(G_1) \in L^2_+(\mathbb{T}; \mathbb{C}^{M \times d});\\
\mathbf{H}^{(\mathbf{r})}_{V} \mathbf{H}^{(\mathbf{l})}_{U}: G_2 \in L^2_+(\mathbb{T}; \mathbb{C}^{M \times d})  \mapsto \mathbf{H}^{(\mathbf{r})}_{V} \mathbf{H}^{(\mathbf{l})}_{U}(G_2) \in L^2_+(\mathbb{T}; \mathbb{C}^{P \times d}).
\end{split}
\end{equation}If $G_1 \in L^2_+(\mathbb{T}; \mathbb{C}^{P \times d}) $ and $G_2 \in L^2_+(\mathbb{T}; \mathbb{C}^{M \times d})$, we use \eqref{HrHlinnprod} to obtain
\begin{equation}\label{HrHladj}
\langle \mathbf{H}^{(\mathbf{r})}_{U} \mathbf{H}^{(\mathbf{l})}_{V} (G_1), G_2 \rangle_{ L^2_+(\mathbb{T}; \mathbb{C}^{M \times d})} = \langle \mathbf{H}^{(\mathbf{l})}_{U}(G_2) , \mathbf{H}^{(\mathbf{l})}_{V} (G_1) \rangle_{ L^2_+(\mathbb{T}; \mathbb{C}^{d \times N})} = \langle  G_1 , \mathbf{H}^{(\mathbf{r})}_{V} \mathbf{H}^{(\mathbf{l})}_{U}(G_2) \rangle_{ L^2_+(\mathbb{T}; \mathbb{C}^{P \times d})}.
\end{equation}If $M=P$, then $\mathbf{H}^{(\mathbf{r})}_{U} \mathbf{H}^{(\mathbf{l})}_{V}$ is a $\mathbb{C}$-linear trace class operator on $L^2_+(\mathbb{T}; \mathbb{C}^{M \times d})$ and $\mathbf{H}^{(\mathbf{r})}_{V} \mathbf{H}^{(\mathbf{l})}_{U} = \left( \mathbf{H}^{(\mathbf{r})}_{U} \mathbf{H}^{(\mathbf{l})}_{V} \right)^*$. In addition, if $U=V$, then $\mathbf{H}^{(\mathbf{r})}_{U} \mathbf{H}^{(\mathbf{l})}_{U}\geq 0$ is a $\mathbb{C}$-linear positive self-adjoint operator on $L^2_+(\mathbb{T}; \mathbb{C}^{M \times d})$ of trace class.\\

\noindent For any $x\in \mathbb{T}$, the matrices $V(x) \in \mathbb{C}^{P \times N}$ and $W(x) \in \mathbb{C}^{P \times Q}$ have the same number of rows, the $(\mathbf{lr})$-double Hankel operators are given by
\begin{equation}\label{HlHrdef}
\begin{split}
\mathbf{H}^{(\mathbf{l})}_{V} \mathbf{H}^{(\mathbf{r})}_{W} : F_1 \in L^2_+(\mathbb{T}; \mathbb{C}^{d \times Q}) \mapsto \mathbf{H}^{(\mathbf{l})}_{V} \mathbf{H}^{(\mathbf{r})}_{W}(F_1) \in L^2_+(\mathbb{T}; \mathbb{C}^{d \times N});\\
\mathbf{H}^{(\mathbf{l})}_{W} \mathbf{H}^{(\mathbf{r})}_{V} : F_2 \in L^2_+(\mathbb{T}; \mathbb{C}^{d \times N}) \mapsto \mathbf{H}^{(\mathbf{l})}_{W} \mathbf{H}^{(\mathbf{r})}_{V}(F_2) \in L^2_+(\mathbb{T}; \mathbb{C}^{d \times Q}).
\end{split}
\end{equation}If $F_1 \in L^2_+(\mathbb{T}; \mathbb{C}^{d \times Q})$ and $F_2 \in L^2_+(\mathbb{T}; \mathbb{C}^{d \times N})$, we use \eqref{HrHlinnprod} to obtain 
\begin{equation}\label{HlHradj}
\langle \mathbf{H}^{(\mathbf{l})}_{V} \mathbf{H}^{(\mathbf{r})}_{W} (F_1), F_2 \rangle_{ L^2_+(\mathbb{T}; \mathbb{C}^{d \times N})} = \langle \mathbf{H}^{(\mathbf{r})}_{V} (F_2) ,  \mathbf{H}^{(\mathbf{r})}_{W} (F_1) \rangle_{ L^2_+(\mathbb{T}; \mathbb{C}^{P \times d})} = \langle  F_1 , \mathbf{H}^{(\mathbf{l})}_{W} \mathbf{H}^{(\mathbf{r})}_{V} (F_2) \rangle_{ L^2_+(\mathbb{T}; \mathbb{C}^{d \times Q})}.
\end{equation}If $N=Q$, then $\mathbf{H}^{(\mathbf{l})}_{V} \mathbf{H}^{(\mathbf{r})}_{W}$ is a $\mathbb{C}$-linear trace class operator on $L^2_+(\mathbb{T}; \mathbb{C}^{d \times Q})$ and $\mathbf{H}^{(\mathbf{l})}_{W} \mathbf{H}^{(\mathbf{r})}_{V} = \left( \mathbf{H}^{(\mathbf{l})}_{V} \mathbf{H}^{(\mathbf{r})}_{W} \right)^*$. In addition, if $V=W$, then $\mathbf{H}^{(\mathbf{l})}_{W} \mathbf{H}^{(\mathbf{r})}_{W}\geq 0$ is a $\mathbb{C}$-linear positive self-adjoint operator on $L^2_+(\mathbb{T}; \mathbb{C}^{d \times Q})$ of trace class. \\

\noindent Since  $\mathbf{S}^* U \in H^{\frac{1}{2}}_+(\mathbb{T}; \mathbb{C}^{M \times N})$, $\mathbf{S}^* V \in H^{\frac{1}{2}}_+(\mathbb{T}; \mathbb{C}^{P \times N})$ and $\mathbf{S}^* W \in H^{\frac{1}{2}}_+(\mathbb{T}; \mathbb{C}^{P\times Q})$, the double shifted Hankel operators are given by
\begin{equation}
\begin{split}
& \mathbf{K}^{(\mathbf{r})}_{U} \mathbf{K}^{(\mathbf{l})}_{V} = \mathbf{H}^{(\mathbf{r})}_{\mathbf{S}^* U} \mathbf{H}^{(\mathbf{l})}_{\mathbf{S}^* V} ; \qquad \mathbf{K}^{(\mathbf{r})}_{V} \mathbf{K}^{(\mathbf{l})}_{U} = \mathbf{H}^{(\mathbf{r})}_{\mathbf{S}^* V} \mathbf{H}^{(\mathbf{l})}_{\mathbf{S}^* U}; \\
& \mathbf{K}^{(\mathbf{l})}_{V} \mathbf{K}^{(\mathbf{r})}_{W}=\mathbf{H}^{(\mathbf{l})}_{\mathbf{S}^* V} \mathbf{H}^{(\mathbf{r})}_{\mathbf{S}^*  W}; \qquad \mathbf{K}^{(\mathbf{l})}_{W} \mathbf{K}^{(\mathbf{r})}_{V}=\mathbf{H}^{(\mathbf{l})}_{\mathbf{S}^* W} \mathbf{H}^{(\mathbf{r})}_{\mathbf{S}^*  V}.
\end{split}
\end{equation}If $G_1 \in L^2_+(\mathbb{T}; \mathbb{C}^{P \times d}) $, $G_2 \in L^2_+(\mathbb{T}; \mathbb{C}^{M \times d})$, $F_1 \in L^2_+(\mathbb{T}; \mathbb{C}^{d \times Q})$ and $F_2 \in L^2_+(\mathbb{T}; \mathbb{C}^{d \times N})$, formulas \eqref{HrHladj} and \eqref{HlHradj} yield that
\begin{equation}
\begin{split}
& \langle \mathbf{K}^{(\mathbf{r})}_{U} \mathbf{K}^{(\mathbf{l})}_{V} (G_1), G_2 \rangle_{ L^2_+(\mathbb{T}; \mathbb{C}^{M \times d})} = \langle \mathbf{K}^{(\mathbf{l})}_{U}(G_2) , \mathbf{K}^{(\mathbf{l})}_{V} (G_1) \rangle_{ L^2_+(\mathbb{T}; \mathbb{C}^{d \times N})} = \langle  G_1 , \mathbf{K}^{(\mathbf{r})}_{V} \mathbf{K}^{(\mathbf{l})}_{U}(G_2) \rangle_{ L^2_+(\mathbb{T}; \mathbb{C}^{P \times d})};\\
& \langle \mathbf{K}^{(\mathbf{l})}_{V} \mathbf{K}^{(\mathbf{r})}_{W} (F_1), F_2 \rangle_{ L^2_+(\mathbb{T}; \mathbb{C}^{d \times N})} = \langle \mathbf{K}^{(\mathbf{r})}_{V} (F_2) ,  \mathbf{K}^{(\mathbf{r})}_{W} (F_1) \rangle_{ L^2_+(\mathbb{T}; \mathbb{C}^{P \times d})} = \langle  F_1 , \mathbf{K}^{(\mathbf{l})}_{W} \mathbf{K}^{(\mathbf{r})}_{V} (F_2) \rangle_{ L^2_+(\mathbb{T}; \mathbb{C}^{d \times Q})}.
\end{split}
\end{equation}If $M=P$, then $\mathbf{K}^{(\mathbf{r})}_{U} \mathbf{K}^{(\mathbf{l})}_{V}$ is a $\mathbb{C}$-linear trace class operator on $L^2_+(\mathbb{T}; \mathbb{C}^{M \times d})$ and $\mathbf{K}^{(\mathbf{r})}_{V} \mathbf{K}^{(\mathbf{l})}_{U} = \left( \mathbf{K}^{(\mathbf{r})}_{U} \mathbf{K}^{(\mathbf{l})}_{V} \right)^*$. In addition, if $U=V$, then $\mathbf{K}^{(\mathbf{r})}_{U} \mathbf{K}^{(\mathbf{l})}_{U}\geq 0$ is a $\mathbb{C}$-linear positive operator on $L^2_+(\mathbb{T}; \mathbb{C}^{M \times d})$ of trace class.\\

\noindent If $N=Q$, then $\mathbf{K}^{(\mathbf{l})}_{V} \mathbf{K}^{(\mathbf{r})}_{W}$ is a $\mathbb{C}$-linear trace class operator on $L^2_+(\mathbb{T}; \mathbb{C}^{d \times Q})$ and $\mathbf{K}^{(\mathbf{l})}_{W} \mathbf{K}^{(\mathbf{r})}_{V} = \left( \mathbf{K}^{(\mathbf{l})}_{V} \mathbf{K}^{(\mathbf{r})}_{W} \right)^*$. In addition, if $V=W$, then $\mathbf{K}^{(\mathbf{l})}_{W} \mathbf{K}^{(\mathbf{r})}_{W}\geq 0$ is a $\mathbb{C}$-linear positive operator on $L^2_+(\mathbb{T}; \mathbb{C}^{d \times Q})$ of trace class.\\

\begin{lem}\label{2H2Klemma}
Given  $M,N, P, Q\in\mathbb{N}_+$,  $U \in H^{\frac{1}{2}}_+(\mathbb{T}; \mathbb{C}^{M \times N})$, $V \in H^{\frac{1}{2}}_+(\mathbb{T}; \mathbb{C}^{P \times N})$ and $W \in H^{\frac{1}{2}}_+(\mathbb{T}; \mathbb{C}^{P\times Q})$, if $G  \in L^2_+(\mathbb{T}; \mathbb{C}^{P \times d})$, $F \in  L^2_+(\mathbb{T}; \mathbb{C}^{d \times Q})$ for some $d \in \mathbb{N}_+$, then we have
\begin{equation}\label{rel2K2Hfor}
\mathbf{K}^{(\mathbf{r})}_{U} \mathbf{K}^{(\mathbf{l})}_{V}(G) = \mathbf{H}^{(\mathbf{r})}_{U} \mathbf{H}^{(\mathbf{l})}_{V}(G) - U\widehat{V^* G}(0), \quad \mathbf{K}^{(\mathbf{l})}_{V} \mathbf{K}^{(\mathbf{r})}_{W}(F) = \mathbf{H}^{(\mathbf{l})}_{V} \mathbf{H}^{(\mathbf{r})}_{W}(F) -  \widehat{F W^*}(0)V.
\end{equation}
\end{lem}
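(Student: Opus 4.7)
The plan is to exploit the factorizations $\mathbf{K}^{(\mathbf{r})}_{U} = \mathbf{H}^{(\mathbf{r})}_{U} \mathbf{S}$ and $\mathbf{K}^{(\mathbf{l})}_{V} = \mathbf{S}^* \mathbf{H}^{(\mathbf{l})}_{V}$ (and their analogues for $W$) from \eqref{rlShiftHankel}, together with the key projection identity $\mathbf{S}\mathbf{S}^* = \mathrm{id}_{L^2_+} - \mathbf{I}$ from \eqref{inverseSS*}. Here $\mathbf{I}$ is interpreted as the rank-one orthogonal projection sending $F \in L^2_+$ to the constant function equal to $\hat{F}(0)$, which is the natural reading required for the identity $\mathbf{S}\mathbf{S}^* = \mathrm{id}_{L^2_+} - \mathbf{I}$ to make sense as an operator equality on $L^2_+$.

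For the first identity I would write
\begin{equation*}
\mathbf{K}^{(\mathbf{r})}_{U} \mathbf{K}^{(\mathbf{l})}_{V} \;=\; \mathbf{H}^{(\mathbf{r})}_{U} (\mathbf{S}\mathbf{S}^*) \mathbf{H}^{(\mathbf{l})}_{V} \;=\; \mathbf{H}^{(\mathbf{r})}_{U} \mathbf{H}^{(\mathbf{l})}_{V} \,-\, \mathbf{H}^{(\mathbf{r})}_{U}\, \mathbf{I}\, \mathbf{H}^{(\mathbf{l})}_{V},
\end{equation*}
and apply both sides to $G$. From the definition in \eqref{rlHankel}, $\mathbf{I}\mathbf{H}^{(\mathbf{l})}_{V}(G) = \widehat{\Pi_{\geq 0}(G^* V)}(0) = \widehat{G^* V}(0) \in \mathbb{C}^{d \times N}$. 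Applying $\mathbf{H}^{(\mathbf{r})}_{U}$ to this constant matrix $C := \widehat{G^* V}(0)$ yields $\Pi_{\geq 0}(U C^*) = U C^*$, because right multiplication by a constant matrix commutes with $\Pi_{\geq 0}$ by \eqref{matrixXL^2} and $U \in L^2_+$. Since $C^* = (\widehat{G^* V}(0))^* = \widehat{V^* G}(0)$, this gives precisely the first formula.

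The second identity is strictly analogous: factor
\begin{equation*}
\mathbf{K}^{(\mathbf{l})}_{V}\mathbf{K}^{(\mathbf{r})}_{W} \;=\; \mathbf{H}^{(\mathbf{l})}_{V}(\mathbf{S}\mathbf{S}^*)\mathbf{H}^{(\mathbf{r})}_{W} \;=\; \mathbf{H}^{(\mathbf{l})}_{V}\mathbf{H}^{(\mathbf{r})}_{W} \,-\, \mathbf{H}^{(\mathbf{l})}_{V}\,\mathbf{I}\,\mathbf{H}^{(\mathbf{r})}_{W},
\end{equation*}
compute $\mathbf{I}\mathbf{H}^{(\mathbf{r})}_{W}(F) = \widehat{WF^*}(0) \in \mathbb{C}^{P \times d}$, then use that $\mathbf{H}^{(\mathbf{l})}_{V}$ applied to a constant matrix $D$ gives $\Pi_{\geq 0}(D^* V) = D^* V$ by the same lemma \eqref{matrixXL^2}, and finally observe $(\widehat{WF^*}(0))^* = \widehat{FW^*}(0)$ to recover the stated $-\widehat{FW^*}(0) V$. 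There is no substantive obstacle: the only point deserving care is the consistent interpretation of $\mathbf{I}$ as the projection onto constant functions, and the dimension bookkeeping when transposing $\widehat{G^* V}(0)$ and $\widehat{WF^*}(0)$ to reconcile with the form of the stated right-hand sides.
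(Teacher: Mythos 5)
Your argument is correct and follows essentially the same route as the paper: factor $\mathbf{K}^{(\mathbf{r})}_{U}\mathbf{K}^{(\mathbf{l})}_{V}=\mathbf{H}^{(\mathbf{r})}_{U}\mathbf{S}\mathbf{S}^*\mathbf{H}^{(\mathbf{l})}_{V}$, invoke $\mathbf{S}\mathbf{S}^*=\mathrm{id}-\mathbf{I}$, and evaluate the Hankel operator on the resulting constant matrix using \eqref{matrixXL^2}. One terminological slip worth noting: the projection $\mathbf{I}$ onto constants in $L^2_+(\mathbb{T};\mathbb{C}^{M\times N})$ has rank $MN$, not rank one, though this has no bearing on the proof.
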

\begin{proof}
Formula \eqref{rlShiftHankel} yields that $\mathbf{K}^{(\mathbf{r})}_{U} \mathbf{K}^{(\mathbf{l})}_{V} = \mathbf{H}^{(\mathbf{r})}_{U} \mathbf{S}  \mathbf{S}^* \mathbf{H}^{(\mathbf{l})}_{V}$ and $\mathbf{K}^{(\mathbf{l})}_{V} \mathbf{K}^{(\mathbf{r})}_{W} = \mathbf{H}^{(\mathbf{l})}_{V} \mathbf{S}  \mathbf{S}^* \mathbf{H}^{(\mathbf{r})}_{W}$. Moreover, we have $\mathbf{H}^{(\mathbf{r})}_{U} \left(\widehat{G^* V}(0) \right)=U\widehat{V^* G}(0) \in  L^2_+(\mathbb{T}; \mathbb{C}^{M \times d})$ and $\mathbf{H}^{(\mathbf{l})}_{V} \left( \widehat{WF^*}(0) \right)= \widehat{F W^*}(0)V \in  L^2_+(\mathbb{T}; \mathbb{C}^{d \times N})$. It suffices to conclude by formula \eqref{inverseSS*}.
\end{proof}
\begin{lem}\label{InvSpaceeHrHl}
Given $M,N,d\in\mathbb{N}_+$ and $t \in \mathbb{R}$, if $U \in H^{\frac{1}{2}}_+(\mathbb{T}; \mathbb{C}^{M \times N})$, then
\begin{equation}\label{ImHuInv}
\begin{split}
& e^{it \mathbf{K}^{(\mathbf{r})}_{U} \mathbf{K}^{(\mathbf{l})}_{U}}\mathbf{S}^* e^{-it \mathbf{H}^{(\mathbf{r})}_{U} \mathbf{H}^{(\mathbf{l})}_{U}}  (\mathrm{Im} \mathbf{H}^{(\mathbf{r})}_{U} ) \subset  \mathrm{Im} \mathbf{K}^{(\mathbf{r})}_{U} \subset \mathrm{Im} \mathbf{H}^{(\mathbf{r})}_{U} \subset L^{2}_+(\mathbb{T}; \mathbb{C}^{M \times d});\\
& e^{it \mathbf{K}^{(\mathbf{l})}_{U} \mathbf{K}^{(\mathbf{r})}_{U}}\mathbf{S}^* e^{-it \mathbf{H}^{(\mathbf{l})}_{U} \mathbf{H}^{(\mathbf{r})}_{U}} ( \mathrm{Im} \mathbf{H}^{(\mathbf{l})}_{U} ) \subset  \mathrm{Im} \mathbf{K}^{(\mathbf{l})}_{U} \subset \mathrm{Im} \mathbf{H}^{(\mathbf{l})}_{U} \subset L^{2}_+(\mathbb{T}; \mathbb{C}^{d \times N}).
\end{split}
\end{equation}
\end{lem}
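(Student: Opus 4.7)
The plan is to reduce the three-operator chain $e^{it\mathbf{K}^{(\mathbf{r})}_U\mathbf{K}^{(\mathbf{l})}_U}\mathbf{S}^* e^{-it\mathbf{H}^{(\mathbf{r})}_U\mathbf{H}^{(\mathbf{l})}_U}$ to the pointwise factorization $\mathbf{S}^*\mathbf{H}^{(\mathbf{r})}_U = \mathbf{K}^{(\mathbf{r})}_U$ from \eqref{rlShiftHankel}, using two intertwining identities to pull the Hankel (resp.\ shifted Hankel) operators through the unitaries. The rightmost inclusions $\mathrm{Im}\,\mathbf{K}^{(\mathbf{r})}_U\subset\mathrm{Im}\,\mathbf{H}^{(\mathbf{r})}_U$ and $\mathrm{Im}\,\mathbf{K}^{(\mathbf{l})}_U\subset\mathrm{Im}\,\mathbf{H}^{(\mathbf{l})}_U$ I would dispose of immediately: since $\mathbf{K}^{(\mathbf{r})}_U = \mathbf{H}^{(\mathbf{r})}_U\mathbf{S}$, one has $\mathrm{Im}\,\mathbf{K}^{(\mathbf{r})}_U = \mathbf{H}^{(\mathbf{r})}_U(\mathbf{S}L^2_+)\subset \mathbf{H}^{(\mathbf{r})}_U(L^2_+) = \mathrm{Im}\,\mathbf{H}^{(\mathbf{r})}_U$, and analogously for $\mathbf{l}$.

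The key step is to establish
\begin{equation*}
e^{-it\mathbf{H}^{(\mathbf{r})}_U\mathbf{H}^{(\mathbf{l})}_U}\,\mathbf{H}^{(\mathbf{r})}_U \;=\; \mathbf{H}^{(\mathbf{r})}_U\,e^{+it\mathbf{H}^{(\mathbf{l})}_U\mathbf{H}^{(\mathbf{r})}_U},\qquad e^{it\mathbf{K}^{(\mathbf{r})}_U\mathbf{K}^{(\mathbf{l})}_U}\,\mathbf{K}^{(\mathbf{r})}_U \;=\; \mathbf{K}^{(\mathbf{r})}_U\,e^{-it\mathbf{K}^{(\mathbf{l})}_U\mathbf{K}^{(\mathbf{r})}_U}.
\end{equation*}
The trivial associativity $(\mathbf{H}^{(\mathbf{r})}_U\mathbf{H}^{(\mathbf{l})}_U)\mathbf{H}^{(\mathbf{r})}_U = \mathbf{H}^{(\mathbf{r})}_U(\mathbf{H}^{(\mathbf{l})}_U\mathbf{H}^{(\mathbf{r})}_U)$ yields by induction $(\mathbf{H}^{(\mathbf{r})}_U\mathbf{H}^{(\mathbf{l})}_U)^n\mathbf{H}^{(\mathbf{r})}_U = \mathbf{H}^{(\mathbf{r})}_U(\mathbf{H}^{(\mathbf{l})}_U\mathbf{H}^{(\mathbf{r})}_U)^n$ for every $n\geq 0$. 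Multiplying by $(-it)^n/n!$ and summing in operator norm is legitimate because both $\mathbf{H}^{(\mathbf{r})}_U\mathbf{H}^{(\mathbf{l})}_U$ and $\mathbf{H}^{(\mathbf{l})}_U\mathbf{H}^{(\mathbf{r})}_U$ are bounded (in fact trace class by \eqref{TrHrHl}); then the $\mathbb{C}$-antilinearity of $\mathbf{H}^{(\mathbf{r})}_U$ sends the scalar $(-it)^n$ to $(+it)^n$ when pulled to the right, which produces the sign flip in the exponent. The $\mathbf{K}$-identity is proved identically from the same algebraic factorization applied to $\mathbf{K}^{(\mathbf{r})}_U\mathbf{K}^{(\mathbf{l})}_U\mathbf{K}^{(\mathbf{r})}_U = \mathbf{K}^{(\mathbf{r})}_U(\mathbf{K}^{(\mathbf{l})}_U\mathbf{K}^{(\mathbf{r})}_U)$, with the sign $+it$ in the argument of the first exponential turning into $-it$ on the right.

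Concatenating is now routine: for any $w\in L^2_+(\mathbb{T};\mathbb{C}^{d\times N})$, applying the three operators to $\mathbf{H}^{(\mathbf{r})}_U w \in \mathrm{Im}\,\mathbf{H}^{(\mathbf{r})}_U$ successively gives $e^{-it\mathbf{H}^{(\mathbf{r})}_U\mathbf{H}^{(\mathbf{l})}_U}\mathbf{H}^{(\mathbf{r})}_U w = \mathbf{H}^{(\mathbf{r})}_U e^{it\mathbf{H}^{(\mathbf{l})}_U\mathbf{H}^{(\mathbf{r})}_U}w$, then $\mathbf{S}^*\mathbf{H}^{(\mathbf{r})}_U(\cdot) = \mathbf{K}^{(\mathbf{r})}_U(\cdot)\in\mathrm{Im}\,\mathbf{K}^{(\mathbf{r})}_U$, and finally $e^{it\mathbf{K}^{(\mathbf{r})}_U\mathbf{K}^{(\mathbf{l})}_U}\mathbf{K}^{(\mathbf{r})}_U(\cdot) = \mathbf{K}^{(\mathbf{r})}_U e^{-it\mathbf{K}^{(\mathbf{l})}_U\mathbf{K}^{(\mathbf{r})}_U}(\cdot) \in \mathrm{Im}\,\mathbf{K}^{(\mathbf{r})}_U$, proving the first line. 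The second line either follows from the same argument with the roles of $(\mathbf{r})$ and $(\mathbf{l})$ exchanged throughout, or, more economically, by conjugating with the transpose $\mathfrak{T}$ via \eqref{TconjHrHl}.

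The main conceptual obstacle is really only bookkeeping: one must track carefully that pulling the antilinear $\mathbf{H}^{(\mathbf{r})}_U$ (resp.\ $\mathbf{K}^{(\mathbf{r})}_U$) past an analytic function $z\mapsto e^{-itz}$ of a $\mathbb{C}$-linear operator conjugates the scalar $-it$ into $+it$ (resp.\ $+it$ into $-it$). This sign flip is precisely what matches the exponents on the two sides of the intertwining identities and makes the chain close up inside $\mathrm{Im}\,\mathbf{K}^{(\mathbf{r})}_U$ rather than escaping to the larger space $\mathrm{Im}\,\mathbf{H}^{(\mathbf{r})}_U$.
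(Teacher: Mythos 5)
Your proof is correct and follows essentially the same route as the paper: you derive the same intertwining identities $e^{-it\mathbf{H}^{(\mathbf{r})}_U\mathbf{H}^{(\mathbf{l})}_U}\mathbf{H}^{(\mathbf{r})}_U = \mathbf{H}^{(\mathbf{r})}_U e^{it\mathbf{H}^{(\mathbf{l})}_U\mathbf{H}^{(\mathbf{r})}_U}$ and $e^{it\mathbf{K}^{(\mathbf{r})}_U\mathbf{K}^{(\mathbf{l})}_U}\mathbf{K}^{(\mathbf{r})}_U = \mathbf{K}^{(\mathbf{r})}_U e^{-it\mathbf{K}^{(\mathbf{l})}_U\mathbf{K}^{(\mathbf{r})}_U}$ (these are exactly \eqref{HExpHH} in the paper) from the same power-series argument together with the sign flip coming from $\mathbb{C}$-antilinearity, and then close the chain via $\mathbf{K}^{(\mathbf{r})}_U = \mathbf{S}^*\mathbf{H}^{(\mathbf{r})}_U$ from \eqref{rlShiftHankel}. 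The only cosmetic difference is that you also spell out the elementary inclusion $\mathrm{Im}\,\mathbf{K}^{(\mathbf{r})}_U\subset\mathrm{Im}\,\mathbf{H}^{(\mathbf{r})}_U$ and mention the transpose shortcut for the second line, both of which the paper leaves implicit.
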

\begin{proof}
Since $\left(\mathbf{H}^{(\mathbf{r})}_{U} \mathbf{H}^{(\mathbf{l})}_{U}\right)^n \mathbf{H}^{(\mathbf{r})}_{U} =\mathbf{H}^{(\mathbf{r})}_{U} \left(\mathbf{H}^{(\mathbf{l})}_{U} \mathbf{H}^{(\mathbf{r})}_{U}\right)^n$ and $\left(\mathbf{H}^{(\mathbf{l})}_{U} \mathbf{H}^{(\mathbf{r})}_{U}\right)^n \mathbf{H}^{(\mathbf{l})}_{U} =\mathbf{H}^{(\mathbf{l})}_{U} \left(\mathbf{H}^{(\mathbf{r})}_{U} \mathbf{H}^{(\mathbf{l})}_{U}\right)^n$, $\forall n \in \mathbb{N}$, the power series of $\exp$ in $\mathcal{B}(L^{2}_+(\mathbb{T}; \mathbb{C}^{M \times d}))$ and $\mathcal{B}(L^{2}_+(\mathbb{T}; \mathbb{C}^{d \times N}))$ yields that
\begin{equation}\label{HExpHH}
\begin{split}
& e^{-it \mathbf{H}^{(\mathbf{r})}_{U} \mathbf{H}^{(\mathbf{l})}_{U}}\mathbf{H}^{(\mathbf{r})}_{U} = \mathbf{H}^{(\mathbf{r})}_{U} e^{it \mathbf{H}^{(\mathbf{l})}_{U} \mathbf{H}^{(\mathbf{r})}_{U}}; \quad e^{-it \mathbf{H}^{(\mathbf{l})}_{U} \mathbf{H}^{(\mathbf{r})}_{U}}\mathbf{H}^{(\mathbf{l})}_{U} = \mathbf{H}^{(\mathbf{l})}_{U} e^{it \mathbf{H}^{(\mathbf{r})}_{U} \mathbf{H}^{(\mathbf{l})}_{U}}; \\
& e^{it \mathbf{K}^{(\mathbf{r})}_{U} \mathbf{K}^{(\mathbf{l})}_{U}}\mathbf{K}^{(\mathbf{r})}_{U} = \mathbf{K}^{(\mathbf{r})}_{U} e^{-it \mathbf{K}^{(\mathbf{l})}_{U} \mathbf{K}^{(\mathbf{r})}_{U}}; \quad  e^{it \mathbf{K}^{(\mathbf{l})}_{U} \mathbf{K}^{(\mathbf{r})}_{U}}\mathbf{K}^{(\mathbf{l})}_{U} = \mathbf{K}^{(\mathbf{l})}_{U} e^{-it \mathbf{K}^{(\mathbf{r})}_{U} \mathbf{K}^{(\mathbf{l})}_{U}}. 
 \end{split}
\end{equation}It suffices to conclude by \eqref{rlShiftHankel}.
\end{proof}

\subsection{The Kronecker theorem}
\begin{defi}
Given a positive integer $n \in\mathbb{N}_+$, let $\mathcal{M}(n)$ denote the set of rational functions $u = \frac{p(\mathbf{e}_1)}{q(\mathbf{e}_1)}$ such that $p \in \mathbb{C}_{\leq n-1}[X], q\in \mathbb{C}_{\leq n}[X]$, the polynomials $p$ and $q$ have no common divisors, $q(0)=1$, $q^{-1}\{0\} \subset \mathbb{C} \backslash \overline{D}(0,1)$, $\deg p= n-1$ or  $\deg q= n$. We set $\mathcal{M}(0)=\{0\}$ and $\mathcal{M}_{\mathrm{FR}}:=\bigcup_{n\in \mathbb{N}}\mathcal{M}(n)$.
\end{defi}
\noindent If $u \in L^2_+(\mathbb{T}; \mathbb{C})$, the Kronecker theorem \cite{Kronecker1881} yields the following equivalence: $\forall n \in \mathbb{N}$, 
\begin{equation}\label{1*1Kroneckerthm}
u \in \mathcal{M}(n) \Longleftrightarrow \mathrm{r}(H_u) = \dim_{\mathbb{C}} \mathrm{Im}H_u =n.
\end{equation}We refer to Appendix 4 (subsection 10.4) of G\'erard--Grellier \cite{GGANNENS} for the proof of \eqref{1*1Kroneckerthm}. Given $M,N \in \mathbb{N}_+$,  
\begin{equation}
\mathcal{M}_{\mathrm{FR}}^{M \times N} = \{\tfrac{A(\mathbf{e}_1)}{q(\mathbf{e}_1)}: q\in \mathbb{C}[X], \; q^{-1}\{0\} \subset \mathbb{C} \backslash \overline{D}(0,1), \; A\in  (\mathbb{C}[X])^{M \times N}\}\subset C^{\infty}_+(\mathbb{T}; \mathbb{C}^{M \times N}).
\end{equation}
\begin{prop}\label{WKroneckerM*N}
Given $U \in L^2_+(\mathbb{T}; \mathbb{C}^{M \times N})$ for some $M,N \in \mathbb{N}$, then each of the following three properties implies the others: \\

\noindent $\mathrm{(a)}$. $U \in \mathcal{M}_{\mathrm{FR}}^{M \times N}$.\\
\noindent $\mathrm{(b)}$. Both $\mathbf{H}^{(\mathbf{r})}_{U}: L^2_+(\mathbb{T}; \mathbb{C}^{d \times N}) \to L^2_+(\mathbb{T}; \mathbb{C}^{M \times d})$ and $\mathbf{H}^{(\mathbf{l})}_{U}: L^2_+(\mathbb{T}; \mathbb{C}^{M \times d}) \to L^2_+(\mathbb{T}; \mathbb{C}^{d \times N})$ are finite-rank operators, $\forall d \in \mathbb{N}_+$, and $\dim_{\mathbb{C}} \mathrm{Im}\mathbf{H}^{(\mathbf{r})}_{U} = \dim_{\mathbb{C}} \mathrm{Im}\mathbf{H}^{(\mathbf{l})}_{U} = \dim_{\mathbb{C}} \mathrm{Im}\mathbf{H}^{(\mathbf{r})}_{U}\mathbf{H}^{(\mathbf{l})}_{U} = \dim_{\mathbb{C}}\mathrm{Im}\mathbf{H}^{(\mathbf{l})}_{U}\mathbf{H}^{(\mathbf{r})}_{U}<+\infty$. \\
\noindent $\mathrm{(c)}$. There exists $d  \in \mathbb{N}_+$ such that at least one of the subspaces $\mathrm{Im}\mathbf{H}^{(\mathbf{r})}_{U}$, $\mathrm{Im}\mathbf{H}^{(\mathbf{l})}_{U}$, $ \mathrm{Im}\mathbf{H}^{(\mathbf{r})}_{U}\mathbf{H}^{(\mathbf{l})}_{U}$, $\mathrm{Im}\mathbf{H}^{(\mathbf{l})}_{U}\mathbf{H}^{(\mathbf{r})}_{U}$ has finite dimension. 
\end{prop}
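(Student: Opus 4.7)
The plan is to reduce the matrix Kronecker characterization to the scalar Kronecker theorem \eqref{1*1Kroneckerthm} via the entrywise identity \eqref{relHrHltoH}, together with the rank identities of Lemma \ref{LemofOrtho} and the antilinear adjoint relation \eqref{HrHlinnprod}. The implication (b) $\Rightarrow$ (c) is trivial, so only (a) $\Rightarrow$ (b) and (c) $\Rightarrow$ (a) require argument.

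For (a) $\Rightarrow$ (b), writing $U = A(\mathbf{e}_1)/q(\mathbf{e}_1)$ with $A \in (\mathbb{C}[X])^{M \times N}$ and $q^{-1}\{0\} \subset \mathbb{C} \setminus \overline{D}(0,1)$ shows that each entry $U_{kn}$ lies in some $\mathcal{M}(n_{kn})$, so by the scalar Kronecker theorem $\dim_{\mathbb{C}} \mathrm{Im}\, H_{U_{kn}} = n_{kn} < \infty$. Formula \eqref{relHrHltoH} then exhibits $\mathbf{H}^{(\mathbf{r})}_{U}(F)$ and $\mathbf{H}^{(\mathbf{l})}_{U}(G)$ as finite combinations of outputs of the scalar Hankel operators $H_{U_{kn}}$, so both matrix Hankel operators are finite-rank with rank bounded by $d \sum_{k,n} n_{kn}$. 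For the four dimension equalities I would invoke Lemma \ref{LemofOrtho}: its surjective antilinear restrictions $\mathbf{H}^{(\mathbf{r})}_{U}|_{\mathrm{Im}\, \mathbf{H}^{(\mathbf{l})}_{U}} \to \mathrm{Im}\, \mathbf{H}^{(\mathbf{r})}_{U} \mathbf{H}^{(\mathbf{l})}_{U}$ and $\mathbf{H}^{(\mathbf{l})}_{U}|_{\mathrm{Im}\, \mathbf{H}^{(\mathbf{r})}_{U}} \to \mathrm{Im}\, \mathbf{H}^{(\mathbf{l})}_{U} \mathbf{H}^{(\mathbf{r})}_{U}$ are injective, hence preserve $\mathbb{C}$-dimension, giving $\dim_{\mathbb{C}} \mathrm{Im}\, \mathbf{H}^{(\mathbf{l})}_{U} = \dim_{\mathbb{C}} \mathrm{Im}\, \mathbf{H}^{(\mathbf{r})}_{U} \mathbf{H}^{(\mathbf{l})}_{U}$ and $\dim_{\mathbb{C}} \mathrm{Im}\, \mathbf{H}^{(\mathbf{r})}_{U} = \dim_{\mathbb{C}} \mathrm{Im}\, \mathbf{H}^{(\mathbf{l})}_{U} \mathbf{H}^{(\mathbf{r})}_{U}$, while the antilinear adjoint relation \eqref{HrHlinnprod} yields $\dim_{\mathbb{C}} \mathrm{Im}\, \mathbf{H}^{(\mathbf{r})}_{U} = \dim_{\mathbb{C}} \mathrm{Im}\, \mathbf{H}^{(\mathbf{l})}_{U}$.

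For (c) $\Rightarrow$ (a), I would first use the same restriction argument to transfer finite-dimensionality of $\mathrm{Im}\, \mathbf{H}^{(\mathbf{r})}_{U} \mathbf{H}^{(\mathbf{l})}_{U}$ or $\mathrm{Im}\, \mathbf{H}^{(\mathbf{l})}_{U} \mathbf{H}^{(\mathbf{r})}_{U}$ to $\mathrm{Im}\, \mathbf{H}^{(\mathbf{l})}_{U}$ or $\mathrm{Im}\, \mathbf{H}^{(\mathbf{r})}_{U}$, and combine with the antilinear adjoint identity to assume without loss of generality that $\dim_{\mathbb{C}} \mathrm{Im}\, \mathbf{H}^{(\mathbf{r})}_{U} < \infty$ for some $d$. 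Next, decomposing $F \in L^2_+(\mathbb{T}; \mathbb{C}^{d \times N})$ into its $d$ rows and applying \eqref{relHrHltoH} shows that the restriction of $\mathbf{H}^{(\mathbf{r})}_{U}$ to the subspace of matrices whose only nonzero row is the first is canonically isomorphic to the $d = 1$ Hankel operator, so finite rank for some $d$ forces finite rank at $d = 1$. Finally, at $d = 1$, taking $F = f \, \mathbb{E}^{(1N)}_{1, n}$ in \eqref{relHrHltoH} identifies the $k$-th entry of the column vector $\mathbf{H}^{(\mathbf{r})}_{U}(F)$ with $H_{U_{kn}}(f)$; projecting the finite-dimensional $\mathrm{Im}\, \mathbf{H}^{(\mathbf{r})}_{U}$ onto its $k$-th component forces each scalar Hankel $H_{U_{kn}}$ to be finite rank, and scalar Kronecker \eqref{1*1Kroneckerthm} yields $U_{kn} = p_{kn}/q_{kn} \in \mathcal{M}_{\mathrm{FR}}$. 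Taking the common denominator $q := \prod_{k,n} q_{kn}$ gives $q U \in (\mathbb{C}[\mathbf{e}_1])^{M \times N}$ with $q^{-1}\{0\} \subset \mathbb{C} \setminus \overline{D}(0,1)$, which is exactly $U \in \mathcal{M}_{\mathrm{FR}}^{M \times N}$. The main obstacles are the careful bookkeeping of $\mathbb{C}$-dimensions under antilinear maps and the reduction from arbitrary $d$ to $d = 1$; once these are in place the argument is a diagonal application of scalar Kronecker.
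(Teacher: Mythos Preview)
Your proposal is correct and follows essentially the same route as the paper: both arguments reduce to the scalar Kronecker theorem \eqref{1*1Kroneckerthm} via the entrywise identity \eqref{relHrHltoH}, and both invoke Lemma~\ref{LemofOrtho} to obtain the four rank equalities. The only cosmetic differences are that the paper packages the finite-rank bound as $\mathrm{Im}\,\mathbf{H}^{(\mathbf{r})}_U \subset \mathbb{V}_U^{M\times d}$ with $\mathbb{V}_U=\sum_{k,n}\mathrm{Im}\,H_{U_{kn}}$, and runs (c)$\Rightarrow$(a) by contradiction at arbitrary $d$ (testing on $f_l\,\mathbb{E}^{(dN)}_{1t}$) rather than your direct argument with an explicit reduction to $d=1$ and a common-denominator construction; the paper also inserts the small remark that finite rank forces $\mathbf{H}^{(\mathbf{r})}_U$ to be Hilbert--Schmidt and hence $U\in H^{1/2}_+$ via \eqref{TrHrHl}, which you may want to include since the statement only assumes $U\in L^2_+$.
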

\begin{proof}
$\mathrm{(a)} \Rightarrow \mathrm{(b)}$: If $U =\sum_{k=1}^M\sum_{n=1}^N U_{kn}\mathbb{E}_{kn}^{(MN)}\in \mathcal{M}_{\mathrm{FR}}^{M \times N}$, then $\mathbb{V}_U:=\sum_{k=1}^M\sum_{n=1}^N \mathrm{Im}H_{U_{kn}}$ is a finite dimensional subspace of $L^2_+(\mathbb{T}; \mathbb{C})$ by \eqref{1*1Kroneckerthm}. For any $d \in \mathbb{N}_+$, formula \eqref{relHrHltoH} yields that $\mathbf{H}^{(\mathbf{r})}_{U} \subset \mathbb{V}_U^{M \times d}$. If one of the subspaces $\mathrm{Im}\mathbf{H}^{(\mathbf{r})}_{U}$, $\mathrm{Im}\mathbf{H}^{(\mathbf{l})}_{U}$, $ \mathrm{Im}\mathbf{H}^{(\mathbf{r})}_{U}\mathbf{H}^{(\mathbf{l})}_{U}$, $\mathrm{Im}\mathbf{H}^{(\mathbf{l})}_{U}\mathbf{H}^{(\mathbf{r})}_{U}$ has finite dimension, then Lemma $\ref{LemofOrtho}$ implies that $\dim_{\mathbb{C}} \mathrm{Im}\mathbf{H}^{(\mathbf{r})}_{U} = \dim_{\mathbb{C}} \mathrm{Im}\mathbf{H}^{(\mathbf{l})}_{U} = \dim_{\mathbb{C}} \mathrm{Im}\mathbf{H}^{(\mathbf{r})}_{U}\mathbf{H}^{(\mathbf{l})}_{U} = \dim_{\mathbb{C}}\mathrm{Im}\mathbf{H}^{(\mathbf{l})}_{U}\mathbf{H}^{(\mathbf{r})}_{U}<+\infty$. $\mathrm{(c)} \Rightarrow \mathrm{(a)}$: If $U \in L^2_+(\mathbb{T}; \mathbb{C}^{M \times N})$  such that $\mathrm{(c)}$ holds, then $\mathbf{H}^{(\mathbf{r})}_{U} \in \mathrm{HS}(L^2_+(\mathbb{T}; \mathbb{C}^{d \times N});  L^2_+(\mathbb{T}; \mathbb{C}^{M \times d}))$ and $U \in H^{\frac{1}{2}}_+(\mathbb{T}; \mathbb{C}^{M \times N})$ by  \eqref{TrHrHl}. Assume that $U \in H^{\frac{1}{2}}_+(\mathbb{T}; \mathbb{C}^{M \times N}) \backslash \mathcal{M}_{\mathrm{FR}}^{M \times N}$, then $H_{U_{st}}$ has infinite rank for some $1 \leq s \leq M$ and $1\leq t \leq N$, thanks to \eqref{1*1Kroneckerthm}. For any $R \in \mathbb{N}_+$, there exists $f_1, f_2, \cdots, f_R\in L^2_+(\mathbb{T};\mathbb{C})$  such that $\{H_{U_{st}}(f_l)\}_{1\leq l \leq R}$ is linearly independent in $L^2_+(\mathbb{T};\mathbb{C})$. Then $\{\mathbf{H}^{(\mathbf{r})}_{U}(f_l \mathbb{E}_{1t}^{(dN)})\}_{1\leq l \leq R}$ is linearly independent in $\mathrm{Im}\mathbf{H}^{(\mathbf{r})}_{U}  \subset  L^2_+(\mathbb{T}; \mathbb{C}^{M \times d})$. So $\dim_{\mathbb{C}} \mathrm{Im}\mathbf{H}^{(\mathbf{r})}_{U}=+\infty$, $\forall d \in \mathbb{N}_+$, which contradicts $\mathrm{(c)}$.
\end{proof}
\begin{rem}\label{ImHr=ImHrHl}
Given $M,N \in \mathbb{N}_+$, if $U \in \mathcal{M}_{\mathrm{FR}}^{M \times N}$, Proposition $\ref{WKroneckerM*N}$ yields that $\mathrm{Im}\mathbf{H}^{(\mathbf{r})}_{U} = \mathrm{Im}\mathbf{H}^{(\mathbf{r})}_{U}\mathbf{H}^{(\mathbf{l})}_{U} = \mathbf{H}^{(\mathbf{r})}_{U} \mathrm{Im}\mathbf{H}^{(\mathbf{l})}_{U} \subset L^2_+(\mathbb{T}; \mathbb{C}^{M \times d})$ and $\mathrm{Im}\mathbf{H}^{(\mathbf{l})}_{U} = \mathrm{Im}\mathbf{H}^{(\mathbf{l})}_{U}\mathbf{H}^{(\mathbf{r})}_{U} = \mathbf{H}^{(\mathbf{l})}_{U} \mathrm{Im}\mathbf{H}^{(\mathbf{r})}_{U} \subset L^2_+(\mathbb{T}; \mathbb{C}^{d \times N})$,  $\forall d \in \mathbb{N}_+$.
\end{rem}

\begin{lem}\label{denseFR}
Given $M,N \in \mathbb{N}_+$ and $s \geq 0$, the set $(\mathcal{M}_{\mathrm{FR}}\backslash \{0\})^{M \times N}$ is dense in $H^s_+(\mathbb{T}; \mathbb{C}^{M \times N})$.
\end{lem}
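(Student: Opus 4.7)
\noindent The plan is to reduce the matrix density to a scalar density statement via the Hilbertian product structure of the norm, and then establish scalar density by Fourier truncation. By the definition in \eqref{LpMat} one has $\|U\|_{H^s(\mathbb{T};\mathbb{C}^{M\times N})}^2 = \sum_{k,j}\|U_{kj}\|_{H^s(\mathbb{T};\mathbb{C})}^2$, so given $U \in H^s_+(\mathbb{T};\mathbb{C}^{M\times N})$ and $\varepsilon > 0$, it suffices to approximate each scalar entry $U_{kj}\in H^s_+(\mathbb{T};\mathbb{C})$ by some nonzero element of $\mathcal{M}_{\mathrm{FR}}$ within $H^s$-distance $\varepsilon/\sqrt{MN}$; assembling these entrywise approximants yields a matrix that lies in $(\mathcal{M}_{\mathrm{FR}}\setminus\{0\})^{M\times N}$ and is within $H^s$-distance $\varepsilon$ of $U$.

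\noindent For the scalar density, I would first observe that every nonzero polynomial $p$ in $\mathbf{e}_1$ of degree $k-1$ belongs to $\mathcal{M}(k) \subset \mathcal{M}_{\mathrm{FR}}$ via the representation $p/q$ with $q=1$: indeed $q(0)=1$, $q^{-1}\{0\}=\emptyset\subset\mathbb{C}\setminus\overline{D}(0,1)$, $\gcd(p,q)=1$ and $\deg p=k-1$. Given $v \in H^s_+(\mathbb{T};\mathbb{C})$, the truncated Fourier series $v_N := \sum_{n=0}^N \hat v(n)\mathbf{e}_n$ is such a polynomial and converges to $v$ in $H^s$ by the standard Plancherel-type tail estimate. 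To force nonvanishing of every approximant (necessary in case $v=0$ or some $v_N=0$), I would set $\tilde v_N := v_N + \delta_N \mathbf{e}_{N+1}$ with $\delta_N>0$ chosen so that $\delta_N(1+(N+1)^2)^{s/2}\leq 2^{-N}$; then $\tilde v_N$ is a polynomial in $\mathbf{e}_1$ of degree exactly $N+1$, hence nonzero and in $\mathcal{M}_{\mathrm{FR}}\setminus\{0\}$, with $\|\tilde v_N - v\|_{H^s}\leq\|v_N-v\|_{H^s}+2^{-N}\to 0$.

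\noindent Applying this scalar construction entry by entry produces, for every $\varepsilon>0$, a matrix in $(\mathcal{M}_{\mathrm{FR}}\setminus\{0\})^{M\times N}$ within $H^s$-distance $\varepsilon$ of $U$. There is no substantial obstacle: the proof is a routine density argument resting on the fact that $\mathcal{M}_{\mathrm{FR}}$ contains every nonzero trigonometric polynomial with only nonnegative frequencies. The only point of care is the nonvanishing requirement on each entry, which is dispatched by the arbitrarily small higher-frequency perturbation above.
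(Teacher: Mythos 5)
Your proposal is correct and takes essentially the same route as the paper: entrywise Fourier truncation to produce polynomial approximants, followed by an arbitrarily small perturbation to ensure every entry is nonzero. The only cosmetic difference is that the paper perturbs only the vanishing entries by a small constant $2^{-m}$, whereas you perturb every entry by a small high-frequency term $\delta_N\mathbf{e}_{N+1}$; both dispatch the nonvanishing requirement in the same spirit and yield convergence in $H^s$.
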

\begin{proof}
If $U = \sum_{n \geq 0} \hat{U}(n)\mathbf{e}_n \in H^s_+(\mathbb{T}; \mathbb{C}^{M \times N})$, set $V^{(m)}:=\sum_{n = 0}^m \hat{U}(n)\mathbf{e}_n = \sum_{k = 1}^M \sum_{j = 1}^N V^{(m)}_{kj} \mathbb{E}_{kj}^{(MN)}$, $\boldsymbol{\Lambda}_m:=\{(k,j) :V^{(m)}_{kj} =0 \}$ and $\tilde{V}^{(m)}:=V^{(m)} + 2^{-m}\sum_{(k,j) \in \boldsymbol{\Lambda}_m}\mathbb{E}_{kj}^{(MN)} \in (\mathcal{M}_{\mathrm{FR}}\backslash \{0\})^{M \times N}$, $\forall m \in \mathbb{N}$. Then $\tilde{V}^{(m)} \to U$ in $H^s_+(\mathbb{T}; \mathbb{C}^{M \times N})$, as $m \to +\infty$.
\end{proof}

\subsection{The Toeplitz operators}
\noindent Given $d, M,N\in\mathbb{N}_+$, recall that the Toeplitz operators of symbol $V \in L^2(\mathbb{T}; \mathbb{C}^{M \times N})$ are given by 
\begin{equation}\label{rlToep}
\begin{split}
& \mathbf{T}^{(\mathbf{r})}_{V} : G \in H^1_+(\mathbb{T}; \mathbb{C}^{N \times d}) \mapsto \mathbf{T}^{(\mathbf{r})}_{V}(G) = \Pi_{\geq 0}(V G) \in L^2_+(\mathbb{T}; \mathbb{C}^{M \times d}),\\
& \mathbf{T}^{(\mathbf{l})}_{V} : F \in H^1_+(\mathbb{T}; \mathbb{C}^{d \times M}) \mapsto \mathbf{T}^{(\mathbf{l})}_{V}(F) = \Pi_{\geq 0}(FV) \in L^2_+(\mathbb{T}; \mathbb{C}^{d \times N}).\\
\end {split}
\end{equation}If $V \in L^{\infty}(\mathbb{T}; \mathbb{C}^{M \times N})$, then $\mathbf{T}^{(\mathbf{r})}_{V}:  L^2_+(\mathbb{T}; \mathbb{C}^{N \times d}) \to L^2_+(\mathbb{T}; \mathbb{C}^{M \times d})$ and $\mathbf{T}^{(\mathbf{l})}_{V}: L^2_+(\mathbb{T}; \mathbb{C}^{d \times M}) \to L^2_+(\mathbb{T}; \mathbb{C}^{d \times N})$ are both bounded operators. Moreover, $\forall G \in L^2_+(\mathbb{T}; \mathbb{C}^{N \times d}), \forall A \in L^2_+(\mathbb{T}; \mathbb{C}^{M \times d})$, we have
\begin{equation}\label{<TrG,A>}
\langle \mathbf{T}^{(\mathbf{r})}_{V}(G), A\rangle_{L^2_+(\mathbb{T}; \mathbb{C}^{M \times d})} = \langle G, \mathbf{T}^{(\mathbf{r})}_{V^*}(A)\rangle_{L^2_+(\mathbb{T}; \mathbb{C}^{N \times d})}. 
\end{equation}If $V \in L^{\infty}(\mathbb{T}; \mathbb{C}^{M \times N})$, $\forall F \in L^2_+(\mathbb{T}; \mathbb{C}^{d \times M}), \forall B \in L^2_+(\mathbb{T}; \mathbb{C}^{d \times N})$, we have
\begin{equation}\label{<TlF,B>}
\langle \mathbf{T}^{(\mathbf{l})}_{V}(F), B\rangle_{L^2_+(\mathbb{T}; \mathbb{C}^{d \times N})} = \langle F, \mathbf{T}^{(\mathbf{l})}_{V^*}(B)\rangle_{L^2_+(\mathbb{T}; \mathbb{C}^{d \times M})}. 
\end{equation}Set $M=N$. If $V \in L^{\infty}(\mathbb{T}; \mathbb{C}^{N \times N})$, then \eqref{<TrG,A>} and \eqref{<TlF,B>} imply that $\mathbf{T}^{(\mathbf{r})}_{V^*} = (\mathbf{T}^{(\mathbf{r})}_{V})^* \in \mathcal{B}(L^2_+(\mathbb{T}; \mathbb{C}^{N \times d}))$ and $\mathbf{T}^{(\mathbf{l})}_{V^*} = (\mathbf{T}^{(\mathbf{l})}_{V})^* \in \mathcal{B}(L^2_+(\mathbb{T}; \mathbb{C}^{d \times N}))$. The next lemma shows some commutator formulas between the Toeplitz operators and shift operators.
\begin{lem}\label{CommTSTS*lem}
Given $d, M,N\in \mathbb{N}_+$, if $B \in L^{\infty}(\mathbb{T}; \mathbb{C}^{M\times N})$, $G \in L^2_+(\mathbb{T}; \mathbb{C}^{N \times d})$, $F\in L^2_+(\mathbb{T}; \mathbb{C}^{d \times M})$, then
\begin{equation}\label{ForCommTSS*}
\begin{split}
& \left[\mathbf{T}^{(\mathbf{r})}_{B}, \mathbf{S}\right](G) = \widehat{BG}(-1) \in \mathbb{C}^{M \times d}; \quad \left[\mathbf{S}^*, \mathbf{T}^{(\mathbf{r})}_{B}\right](G) = \mathbf{S}^*\left(\Pi_{\geq 0}B \right)\hat{G}(0) \in  L^2_+(\mathbb{T}; \mathbb{C}^{M \times d});\\
& \left[\mathbf{T}^{(\mathbf{l})}_{B}, \mathbf{S}\right](F) = \widehat{FB}(-1) \in \mathbb{C}^{d \times N}; \quad \left[\mathbf{S}^*, \mathbf{T}^{(\mathbf{l})}_{B}\right](F) = \hat{F}(0) \mathbf{S}^*\left(\Pi_{\geq 0}B \right) \in  L^2_+(\mathbb{T}; \mathbb{C}^{d \times N}).\\
\end{split}
\end{equation}
\end{lem}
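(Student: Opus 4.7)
The plan is to compute each commutator directly by writing $\mathbf{S}$ as multiplication by $\mathbf{e}_1$ and $\mathbf{S}^\ast$ as $\Pi_{\geq 0}(\mathbf{e}_{-1}\,\cdot\,)$, then carefully moving the Szegő projector past scalar factors and constant matrix factors using the commutation rules from Lemma \eqref{matrixXL^2}. The only genuine arithmetic input is the elementary Fourier identity
\[
\Pi_{\geq 0}(\mathbf{e}_1 h) \;=\; \mathbf{e}_1 \Pi_{\geq -1}(h) \;=\; \mathbf{e}_1 \Pi_{\geq 0}(h) \;+\; \widehat{h}(-1),
\qquad \forall h\in L^2(\mathbb{T};\mathbb{C}^{M\times d}),
\]
together with the dual statement $\Pi_{\geq 0}(\mathbf{e}_{-1} h) = \mathbf{S}^\ast(\Pi_{\geq 0} h)$, both of which follow from inspection of Fourier coefficients and from the definition \eqref{S*defIntro}.

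For the first identity, I would write $\mathbf{T}^{(\mathbf{r})}_B \mathbf{S}(G) = \Pi_{\geq 0}(\mathbf{e}_1 B G)$ and $\mathbf{S}\mathbf{T}^{(\mathbf{r})}_B(G) = \mathbf{e}_1 \Pi_{\geq 0}(BG)$, then subtract using the displayed Fourier identity with $h=BG$. The leftover term is exactly $\widehat{BG}(-1)\in \mathbb{C}^{M\times d}$, proving $[\mathbf{T}^{(\mathbf{r})}_B,\mathbf{S}](G)=\widehat{BG}(-1)$. For the left Toeplitz analogue, apply the same steps with $B$ multiplied on the right: $\mathbf{T}^{(\mathbf{l})}_B \mathbf{S}(F) = \Pi_{\geq 0}(\mathbf{e}_1 F B)$ and $\mathbf{S}\mathbf{T}^{(\mathbf{l})}_B(F) = \mathbf{e}_1 \Pi_{\geq 0}(FB)$, yielding $\widehat{FB}(-1)$.

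For the second identity, I would first observe that on $L^2_+$ we have $\mathbf{S}^\ast(G) = \mathbf{e}_{-1}(G-\hat G(0))$. Then
\[
\mathbf{T}^{(\mathbf{r})}_B \mathbf{S}^\ast(G) \;=\; \Pi_{\geq 0}\bigl(\mathbf{e}_{-1} B G\bigr) \;-\; \Pi_{\geq 0}\bigl(\mathbf{e}_{-1} B\bigr)\,\hat G(0),
\]
where I pulled the constant matrix $\hat G(0)$ past the projector using \eqref{matrixXL^2}. The first summand equals $\mathbf{S}^\ast \Pi_{\geq 0}(BG) = \mathbf{S}^\ast \mathbf{T}^{(\mathbf{r})}_B(G)$ by direct Fourier expansion, and $\Pi_{\geq 0}(\mathbf{e}_{-1}B) = \mathbf{S}^\ast(\Pi_{\geq 0}B)$ by the dual identity above. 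Subtracting gives $[\mathbf{S}^\ast, \mathbf{T}^{(\mathbf{r})}_B](G) = \mathbf{S}^\ast(\Pi_{\geq 0}B)\hat G(0)$. The left version is identical after swapping sides; since $\hat F(0)\in \mathbb{C}^{d\times M}$ commutes with $\Pi_{\geq 0}$ from the left by \eqref{matrixXL^2}, the analogous computation produces $\hat F(0)\,\mathbf{S}^\ast(\Pi_{\geq 0}B)$.

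There is no real obstacle here; the only bookkeeping point requiring attention is the order of matrix multiplication—one must be careful that $\hat G(0)$ (respectively $\hat F(0)$) sits on the side where it is compatible in size with $B$, and that $\Pi_{\geq 0}$ is pulled through on the correct side as allowed by Lemma \eqref{matrixXL^2}. Once this is respected, each commutator reduces in two or three lines to the stated formula.
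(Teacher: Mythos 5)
Your proposal is correct and follows essentially the same route as the paper: both proofs are direct Fourier-mode computations that decompose $\mathbf{e}_{\pm 1}(\cdot)$ into positive and negative Szeg\H{o} parts and pull constant matrices through $\Pi_{\geq 0}$ via \eqref{matrixXL^2}. The only cosmetic difference is in the $\mathbf{S}^\ast$ identities, where you expand $\mathbf{S}^\ast(G)=\mathbf{e}_{-1}(G-\hat G(0))$ and compute $\mathbf{T}^{(\mathbf{r})}_B\mathbf{S}^\ast(G)$ directly, whereas the paper starts from $\mathbf{S}^\ast\mathbf{T}^{(\mathbf{r})}_B(G)=\Pi_{\geq 0}(\mathbf{e}_{-1}BG)$ and splits $\mathbf{e}_{-1}G$ by $\Pi_{\geq 0}+\Pi_{<0}$; these are the same computation read in opposite directions.
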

\begin{proof}
Since $\Pi_{\geq 0} \left(\mathbf{e}_1 \Pi_{<0} (BG) \right) = \widehat{BG}(-1)$ and $\Pi_{\geq 0} \left(\mathbf{e}_1 \Pi_{<0} (FB) \right) = \widehat{FB}(-1)$. So we have $\mathbf{T}^{(\mathbf{r})}_{B} \mathbf{S} (G) = \mathbf{e}_1 \mathbf{T}^{(\mathbf{r})}_{B}(G)  + \Pi_{\geq 0}\left(\mathbf{e}_1 \Pi_{< 0}(BG)\right)= \mathbf{S}\mathbf{T}^{(\mathbf{r})}_{B}(G) +  \widehat{BG}(-1)$ and $\mathbf{T}^{(\mathbf{l})}_{B} \mathbf{S} (F) = \mathbf{e}_1 \mathbf{T}^{(\mathbf{l})}_{B}(F)  + \Pi_{\geq 0}\left(\mathbf{e}_1 \Pi_{< 0}(FB)\right)= \mathbf{S}\mathbf{T}^{(\mathbf{l})}_{B}(F) +  \widehat{FB}(-1)$. Since $\mathbf{e}_{-1} \Pi_{<0}(BG) \in L^2_-(\mathbb{T}; \mathbb{C}^{M \times d})$ and $\Pi_{\geq 0}\left(B \Pi_{<0}(\mathbf{e}_{-1}G) \right)=\Pi_{\geq 0}\left(\mathbf{e}_{-1} B \right)\hat{G}(0)$, so $\mathbf{S}^* \mathbf{T}^{(\mathbf{r})}_{B} (G) = \Pi_{\geq 0} \left(\mathbf{e}_{-1} B G \right)= \mathbf{T}^{(\mathbf{r})}_{B}\mathbf{S}^* (G)+\Pi_{\geq 0} \left(\mathbf{e}_{-1} (\Pi_{\geq 0} B)  \right)\hat{G}(0) = \mathbf{T}^{(\mathbf{r})}_{B}\mathbf{S}^* (G) + \mathbf{S}^*\left(\Pi_{\geq 0}B \right)\hat{G}(0)$. Since $\mathbf{e}_{-1}\Pi_{<0}(FB) \in L^2(\mathbb{T}; \mathbb{C}^{d \times N})$ and $\Pi_{\geq 0}\left(\Pi_{<0}(\mathbf{e}_{-1}F) B \right)=\hat{F}(0) \Pi_{\geq 0}\left(\mathbf{e}_{-1} B \right)=\hat{F}(0) \Pi_{\geq 0}\left(\mathbf{e}_{-1} (\Pi_{\geq 0}B) \right)$, we have $\mathbf{S}^* \mathbf{T}^{(\mathbf{l})}_{B} (F) =\Pi_{\geq 0}\left( \mathbf{e}_{-1} F B  \right) = \mathbf{T}^{(\mathbf{l})}_{B}  \mathbf{S}^* (F) + \hat{F}(0) \mathbf{S}^*\left(\Pi_{\geq 0}B \right)$. 
\end{proof}
 
\noindent Given any positive integers $d, M,N, P, Q\in\mathbb{N}_+$, we choose $A \in L^{\infty} (\mathbb{T}; \mathbb{C}^{M \times N})$, $B \in   L^{\infty}(\mathbb{T}; \mathbb{C}^{N \times P})$ and $C \in  L^{\infty}(\mathbb{T}; \mathbb{C}^{P\times Q})$. The following double Toeplitz operators are bounded:
\begin{equation}\label{2Toep}
\begin{split}
&\mathbf{T}^{(\mathbf{r})}_{A} \mathbf{T}^{(\mathbf{r})}_{B} : G \in  L^2_+(\mathbb{T}; \mathbb{C}^{P  \times d}) \mapsto \mathbf{T}^{(\mathbf{r})}_{A} \mathbf{T}^{(\mathbf{r})}_{B}(G) = \Pi_{\geq 0}\left( A\Pi_{\geq 0}(BG)\right)\in  L^2_+(\mathbb{T}; \mathbb{C}^{M \times d});\\
& \mathbf{T}^{(\mathbf{r})}_{AB} : G \in  L^2_+(\mathbb{T}; \mathbb{C}^{P  \times d}) \mapsto \mathbf{T}^{(\mathbf{r})}_{AB}(G) = \Pi_{\geq 0}\left(ABG\right)\in  L^2_+(\mathbb{T}; \mathbb{C}^{M \times d});\\
&\mathbf{T}^{(\mathbf{l})}_{C} \mathbf{T}^{(\mathbf{l})}_{B} : F \in  L^2_+(\mathbb{T}; \mathbb{C}^{d  \times N}) \mapsto \mathbf{T}^{(\mathbf{l})}_{C} \mathbf{T}^{(\mathbf{l})}_{B}(G) = \Pi_{\geq 0}\left( \Pi_{\geq 0}(FB)C\right)\in  L^2_+(\mathbb{T}; \mathbb{C}^{d\times Q});\\
& \mathbf{T}^{(\mathbf{l})}_{BC} : F \in  L^2_+(\mathbb{T}; \mathbb{C}^{d  \times N}) \mapsto \mathbf{T}^{(\mathbf{l})}_{BC}(G) = \Pi_{\geq 0}\left(FBC\right)\in  L^2_+(\mathbb{T}; \mathbb{C}^{d\times Q}).
\end{split}
\end{equation}

\begin{lem}\label{2K2Tlemma}
Given  $M,N, P, Q\in\mathbb{N}_+$,  $U \in H^{\frac{1}{2}}_+(\mathbb{T}; \mathbb{C}^{M \times N})$, $V \in H^{\frac{1}{2}}_+(\mathbb{T}; \mathbb{C}^{P \times N})$ and $W \in H^{\frac{1}{2}}_+(\mathbb{T}; \mathbb{C}^{P\times Q})$, then we have
\begin{equation}\label{rel2K&T-TTfor}
\mathbf{K}^{(\mathbf{r})}_{U} \mathbf{K}^{(\mathbf{l})}_{V}  = \mathbf{T}^{(\mathbf{r})}_{U V^*}  - \mathbf{T}^{(\mathbf{r})}_{U}\mathbf{T}^{(\mathbf{r})}_{V^*}, \quad \mathbf{K}^{(\mathbf{l})}_{V} \mathbf{K}^{(\mathbf{r})}_{W} = \mathbf{T}^{(\mathbf{l})}_{W^* V}  - \mathbf{T}^{(\mathbf{l})}_{V}\mathbf{T}^{(\mathbf{l})}_{W^*}.
\end{equation}
\end{lem}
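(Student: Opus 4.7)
The plan is to reduce each identity to a direct Szegő projector calculation and then cancel the resulting zero-mode correction using Lemma \ref{2H2Klemma}. I describe the argument for the first identity; the second is proved by the same recipe (and can also be read off by conjugation with the transpose transform \eqref{TconjHrHl}).

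For $G \in L^2_+(\mathbb{T};\mathbb{C}^{P \times d})$, I first expand $\mathbf{H}^{(\mathbf{r})}_U \mathbf{H}^{(\mathbf{l})}_V(G)$. Applying the adjoint identity \eqref{Pi<0adjTorus} to the symbol $G^*V$ yields $(\mathbf{H}^{(\mathbf{l})}_V(G))^* = \Pi_{<0}(V^*G) + \widehat{V^*G}(0)$, so using $U \in L^2_+$ and \eqref{matrixXL^2} to bring the constant matrix $\widehat{V^*G}(0)$ through $\Pi_{\geq 0}$,
\[ \mathbf{H}^{(\mathbf{r})}_U \mathbf{H}^{(\mathbf{l})}_V(G) = \Pi_{\geq 0}(U\Pi_{<0}(V^*G)) + U\widehat{V^*G}(0). \]
Writing $\Pi_{<0}(V^*G) = V^*G - \Pi_{\geq 0}(V^*G)$ then identifies the first summand with $\mathbf{T}^{(\mathbf{r})}_{UV^*}(G) - \mathbf{T}^{(\mathbf{r})}_U \mathbf{T}^{(\mathbf{r})}_{V^*}(G)$, giving
\[ \mathbf{H}^{(\mathbf{r})}_U \mathbf{H}^{(\mathbf{l})}_V(G) = \mathbf{T}^{(\mathbf{r})}_{UV^*}(G) - \mathbf{T}^{(\mathbf{r})}_U \mathbf{T}^{(\mathbf{r})}_{V^*}(G) + U\widehat{V^*G}(0). \]

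Next, formula \eqref{rel2K2Hfor} in Lemma \ref{2H2Klemma} reads $\mathbf{K}^{(\mathbf{r})}_U \mathbf{K}^{(\mathbf{l})}_V(G) = \mathbf{H}^{(\mathbf{r})}_U \mathbf{H}^{(\mathbf{l})}_V(G) - U\widehat{V^*G}(0)$, so the rank-one correction cancels and the first identity follows. The second identity is obtained by applying the same three-step recipe to $F \in L^2_+(\mathbb{T};\mathbb{C}^{d \times Q})$: use $(\mathbf{H}^{(\mathbf{r})}_W(F))^* = \Pi_{<0}(FW^*) + \widehat{FW^*}(0)$ to expand $\mathbf{H}^{(\mathbf{l})}_V \mathbf{H}^{(\mathbf{r})}_W(F) = \mathbf{T}^{(\mathbf{l})}_{W^*V}(F) - \mathbf{T}^{(\mathbf{l})}_V \mathbf{T}^{(\mathbf{l})}_{W^*}(F) + \widehat{FW^*}(0) V$, and cancel the correction via the companion formula in Lemma \ref{2H2Klemma}.

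The main (modest) difficulty is the zero Fourier mode bookkeeping: the adjoint identity \eqref{Pi<0adjTorus} generates the constant correction $U\widehat{V^*G}(0)$, while Lemma \ref{2H2Klemma} contributes exactly the opposite correction, and the whole argument hinges on these matching in sign, size and matrix shape. Integrability of $UV^*G$ in $L^1$ (needed for $\Pi_{\geq 0}$ to act on the symbol) is automatic since $U, V^* \in L^p(\mathbb{T})$ for every finite $p$ by Trudinger's inequality \eqref{TruMatrixIne}.
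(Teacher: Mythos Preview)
Your proof is correct and follows essentially the same approach as the paper: expand $\mathbf{H}^{(\mathbf{r})}_U \mathbf{H}^{(\mathbf{l})}_V(G)$ via the adjoint identity \eqref{Pi<0adjTorus} to obtain $(\mathbf{T}^{(\mathbf{r})}_{UV^*} - \mathbf{T}^{(\mathbf{r})}_U \mathbf{T}^{(\mathbf{r})}_{V^*})(G) + U\widehat{V^*G}(0)$, then cancel the zero-mode term using \eqref{rel2K2Hfor}. The only cosmetic difference is that the paper takes $G \in H^1_+$ to stay within the stated domain of the Toeplitz operators, whereas you work on $L^2_+$ and invoke Trudinger to justify the products; either route is fine.
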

\begin{proof}
If $G  \in H^1_+(\mathbb{T}; \mathbb{C}^{P \times d})$, $F \in  H^1_+(\mathbb{T}; \mathbb{C}^{d \times Q})$ for some $d \in \mathbb{N}_+$, formula \eqref{Pi<0adjTorus} yields that
\begin{equation*}
\begin{split}
& \mathbf{H}^{(\mathbf{r})}_{U} \mathbf{H}^{(\mathbf{l})}_{V} (G) = \Pi_{\geq 0} \left(UV^* G- U \Pi_{\geq 0}(V^* G) \right) + U\widehat{V^* G}(0) = (\mathbf{T}^{(\mathbf{r})}_{U V^*}  - \mathbf{T}^{(\mathbf{r})}_{U}\mathbf{T}^{(\mathbf{r})}_{V^*}) (G) + U\widehat{V^* G}(0); \\
& \mathbf{H}^{(\mathbf{l})}_{V} \mathbf{H}^{(\mathbf{r})}_{W}(F) =  \Pi_{\geq 0} \left(F W^* V - \Pi_{\geq 0}(F W^*)V \right) +      \widehat{F W^*}(0)V =(\mathbf{T}^{(\mathbf{l})}_{W^* V}  - \mathbf{T}^{(\mathbf{l})}_{V}\mathbf{T}^{(\mathbf{l})}_{W^*})(F) + \widehat{F W^*}(0)V.
\end{split}
\end{equation*}It suffices to conclude by \eqref{rel2K2Hfor}.
\end{proof}

\begin{lem}
Given $P \in \mathbb{C}^{M \times d}$ and $Q \in \mathbb{C}^{d\times N}$ for some $M,N,d \in \mathbb{N}_+$, if $U\in H^{\frac{1}{2}}_+(\mathbb{T};  \mathbb{C}^{M\times N})$, then
\begin{equation}\label{HHP}
\mathbf{H}^{(\mathbf{r})}_{U} \mathbf{H}^{(\mathbf{l})}_{U} (P) =  \mathbf{T}^{(\mathbf{r})}_{U U^*}(P) \in  L^2_+(\mathbb{T};  \mathbb{C}^{M\times d}), \quad \mathbf{H}^{(\mathbf{l})}_{U} \mathbf{H}^{(\mathbf{r})}_{U} (Q) =  \mathbf{T}^{(\mathbf{l})}_{U^* U}(Q) \in  L^2_+(\mathbb{T};  \mathbb{C}^{d\times N}).
\end{equation}
\end{lem}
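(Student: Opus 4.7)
The plan is to observe that the key feature of $P$ and $Q$ is that they are \emph{constant} matrices viewed as elements of $L^2_+$, i.e.\ their only nonzero Fourier mode is at $n=0$. Consequently the constant matrices can be pulled across $\Pi_{\geq 0}$ (left- or right-multiplication by a constant commutes with the Szeg\H{o} projector, by the lemma surrounding \eqref{matrixXL^2}), and the two identities in \eqref{HHP} then reduce to a short unwinding of the definitions of $\mathbf{H}^{(\mathbf{r})}_U$, $\mathbf{H}^{(\mathbf{l})}_U$, $\mathbf{T}^{(\mathbf{r})}_{UU^*}$ and $\mathbf{T}^{(\mathbf{l})}_{U^*U}$.

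Concretely, for the first identity I would compute in two steps. First,
\begin{equation*}
\mathbf{H}^{(\mathbf{l})}_U(P) \;=\; \Pi_{\geq 0}(P^* U) \;=\; P^*\, \Pi_{\geq 0}(U) \;=\; P^* U,
\end{equation*}
using \eqref{matrixXL^2} for the constant matrix $P^*\in \mathbb{C}^{d\times M}$ and the fact that $U\in L^2_+$. Second, applying $\mathbf{H}^{(\mathbf{r})}_U$ to $P^*U\in L^2_+(\mathbb{T};\mathbb{C}^{d\times N})$,
\begin{equation*}
\mathbf{H}^{(\mathbf{r})}_U\mathbf{H}^{(\mathbf{l})}_U(P) \;=\; \Pi_{\geq 0}\bigl(U\,(P^*U)^*\bigr) \;=\; \Pi_{\geq 0}(U U^* P) \;=\; \mathbf{T}^{(\mathbf{r})}_{UU^*}(P),
\end{equation*}
which is exactly the first identity in \eqref{HHP}.

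For the second identity the argument is symmetric: since $Q^*\in \mathbb{C}^{N\times d}$ is constant and $U\in L^2_+$, \eqref{matrixXL^2} gives $\mathbf{H}^{(\mathbf{r})}_U(Q)=\Pi_{\geq 0}(UQ^*)=\Pi_{\geq 0}(U)Q^* = UQ^*$, and then
\begin{equation*}
\mathbf{H}^{(\mathbf{l})}_U\mathbf{H}^{(\mathbf{r})}_U(Q) \;=\; \Pi_{\geq 0}\bigl((UQ^*)^* U\bigr) \;=\; \Pi_{\geq 0}(Q U^* U) \;=\; \mathbf{T}^{(\mathbf{l})}_{U^*U}(Q).
\end{equation*}
There is no real obstacle here: the whole content of the lemma is the trivial observation that the inner Szeg\H{o} projector disappears when its argument already happens to lie in $L^2_+$, which is forced by the constancy of $P$ or $Q$ combined with $U\in L^2_+$. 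The only point that requires minimal care is matching the shapes of the matrix factors so that the multiplications make sense, which is guaranteed by the hypotheses $P\in\mathbb{C}^{M\times d}$ and $Q\in\mathbb{C}^{d\times N}$.
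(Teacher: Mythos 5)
Your proof is correct and follows essentially the same approach as the paper: pull the constant matrix through $\Pi_{\geq 0}$ via \eqref{matrixXL^2}, note that the inner projector then acts trivially on $P^*U\in L^2_+$ (resp.\ $UQ^*\in L^2_+$), and unwind the remaining definitions. The only small omission is the paper's initial sanity check (via Trudinger's inequality \eqref{TruMatrixIne}, $U\in H^{1/2}_+\Rightarrow U\in L^4 \Rightarrow UU^*, U^*U\in L^2$) ensuring that $\mathbf{T}^{(\mathbf{r})}_{UU^*}$ and $\mathbf{T}^{(\mathbf{l})}_{U^*U}$ are actually well-defined on constant symbols with range in $L^2_+$; this is worth a sentence, but it does not affect the validity of your computation.
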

\begin{proof}
Trudinger's inequality \eqref{TruMatrixIne} yields that $U U^* \in L^2_+(\mathbb{T};  \mathbb{C}^{M\times M})$ and $U^* U \in L^2_+(\mathbb{T};  \mathbb{C}^{N\times N})$. Then \eqref{HHP} is obtained by \eqref{rlHankel}, \eqref{rlToep}  and \eqref{matrixXL^2}.
\end{proof}

\subsection{Proof of theorem $\ref{LaxPairThm}$} 
\begin{lem}\label{HUVWKUVWLem}
Given $s> \frac{1}{2}$ and $M,N,P,Q \in \mathbb{N}_+$, if $U \in H^s_+(\mathbb{T}; \mathbb{C}^{M \times N})$, $V \in H^s_+(\mathbb{T}; \mathbb{C}^{P \times N})$ and $W \in H^s_+(\mathbb{T}; \mathbb{C}^{P\times Q})$, then $\forall d \in \mathbb{N}_+$,  the following identities hold,
\begin{equation}\label{HUVWfor}
\begin{split}
\mathbf{H}^{(\mathbf{r})}_{\Pi_{\geq 0}(UV^*W)} = &\mathbf{T}^{(\mathbf{r})}_{UV^*}\mathbf{H}^{(\mathbf{r})}_W + \mathbf{H}^{(\mathbf{r})}_U \mathbf{T}^{(\mathbf{l})}_{W^* V} - \mathbf{H}^{(\mathbf{r})}_U \mathbf{H}^{(\mathbf{l})}_V \mathbf{H}^{(\mathbf{r})}_W: L^2_+(\mathbb{T}; \mathbb{C}^{d \times Q}) \to L^2_+(\mathbb{T}; \mathbb{C}^{M \times d});\\
\mathbf{H}^{(\mathbf{l})}_{\Pi_{\geq 0}(UV^*W)} = & \mathbf{T}^{(\mathbf{l})}_{V^* W}\mathbf{H}^{(\mathbf{l})}_U + \mathbf{H}^{(\mathbf{l})}_W \mathbf{T}^{(\mathbf{r})}_{V U^* } - \mathbf{H}^{(\mathbf{l})}_W \mathbf{H}^{(\mathbf{r})}_V \mathbf{H}^{(\mathbf{l})}_U: L^2_+(\mathbb{T}; \mathbb{C}^{M \times d}) \to L^2_+(\mathbb{T}; \mathbb{C}^{d \times Q}); \\
\mathbf{K}^{(\mathbf{r})}_{\Pi_{\geq 0}(UV^*W)} = &\mathbf{T}^{(\mathbf{r})}_{UV^*}\mathbf{K}^{(\mathbf{r})}_W + \mathbf{K}^{(\mathbf{r})}_U \mathbf{T}^{(\mathbf{l})}_{W^* V} - \mathbf{K}^{(\mathbf{r})}_U \mathbf{K}^{(\mathbf{l})}_V \mathbf{K}^{(\mathbf{r})}_W: L^2_+(\mathbb{T}; \mathbb{C}^{d \times Q}) \to L^2_+(\mathbb{T}; \mathbb{C}^{M \times d});\\
\mathbf{K}^{(\mathbf{l})}_{\Pi_{\geq 0}(UV^*W)} = & \mathbf{T}^{(\mathbf{l})}_{V^* W}\mathbf{K}^{(\mathbf{l})}_U + \mathbf{K}^{(\mathbf{l})}_W \mathbf{T}^{(\mathbf{r})}_{V U^* } - \mathbf{K}^{(\mathbf{l})}_W \mathbf{K}^{(\mathbf{r})}_V \mathbf{K}^{(\mathbf{l})}_U: L^2_+(\mathbb{T}; \mathbb{C}^{M \times d}) \to L^2_+(\mathbb{T}; \mathbb{C}^{d \times Q}). 
\end{split}
\end{equation}Equivalently,  $\forall d \in \mathbb{N}_+$, the following commutator formulas also hold:
\begin{equation}\label{commS*Toep(rlr)}
\begin{split}
\left[\mathbf{S}^*, \mathbf{T}^{(\mathbf{r})}_{UV^*}\right]\mathbf{H}^{(\mathbf{r})}_W  = &\mathbf{K}^{(\mathbf{r})}_U \left( \mathbf{H}^{(\mathbf{l})}_V \mathbf{H}^{(\mathbf{r})}_W  - \mathbf{K}^{(\mathbf{l})}_V \mathbf{K}^{(\mathbf{r})}_W\right): L^2_+(\mathbb{T}; \mathbb{C}^{d \times Q}) \to L^2_+(\mathbb{T}; \mathbb{C}^{M \times d}); \\
 \mathbf{H}^{(\mathbf{r})}_U \left[\mathbf{T}^{(\mathbf{l})}_{W^* V}, \mathbf{S}\right] = & \left(\mathbf{H}^{(\mathbf{r})}_U \mathbf{H}^{(\mathbf{l})}_V  -  \mathbf{K}^{(\mathbf{r})}_U \mathbf{K}^{(\mathbf{l})}_V \right) \mathbf{K}^{(\mathbf{r})}_W: L^2_+(\mathbb{T}; \mathbb{C}^{d \times Q}) \to L^2_+(\mathbb{T}; \mathbb{C}^{M \times d}); \\
\left[\mathbf{S}^*, \mathbf{T}^{(\mathbf{l})}_{V^*W }\right]\mathbf{H}^{(\mathbf{l})}_U  = &\mathbf{K}^{(\mathbf{l})}_W \left( \mathbf{H}^{(\mathbf{r})}_V \mathbf{H}^{(\mathbf{l})}_U  - \mathbf{K}^{(\mathbf{r})}_V \mathbf{K}^{(\mathbf{l})}_U\right): L^2_+(\mathbb{T}; \mathbb{C}^{M \times d}) \to L^2_+(\mathbb{T}; \mathbb{C}^{d \times Q}); \\
 \mathbf{H}^{(\mathbf{l})}_W \left[\mathbf{T}^{(\mathbf{r})}_{V U^*}, \mathbf{S}\right] = & \left(\mathbf{H}^{(\mathbf{l})}_W \mathbf{H}^{(\mathbf{r})}_V  -  \mathbf{K}^{(\mathbf{l})}_W \mathbf{K}^{(\mathbf{r})}_V \right) \mathbf{K}^{(\mathbf{l})}_U: L^2_+(\mathbb{T}; \mathbb{C}^{M \times d}) \to L^2_+(\mathbb{T}; \mathbb{C}^{d \times Q}). \\
\end{split}
\end{equation}
\end{lem}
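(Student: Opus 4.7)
The plan is to prove the four Hankel identities in \eqref{HUVWfor} first, and then derive the commutator identities in \eqref{commS*Toep(rlr)} as rearrangements of those. I would establish the first identity (for $\mathbf{H}^{(\mathbf{r})}_{\Pi_{\geq 0}(UV^*W)}$) directly by a two-step splitting, and obtain the remaining three by transpose symmetry and by right-multiplying by $\mathbf{S}$.

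Fix $F \in L^2_+(\mathbb{T}; \mathbb{C}^{d \times Q})$. Since $\Pi_{<0}(UV^*W)$ has strictly negative Fourier modes and $F^*$ has only nonpositive modes, the product $\Pi_{<0}(UV^*W)\,F^*$ has strictly negative modes, so $\mathbf{H}^{(\mathbf{r})}_{\Pi_{\geq 0}(UV^*W)}(F) = \Pi_{\geq 0}(UV^*WF^*)$. Split $WF^* = \Pi_{\geq 0}(WF^*) + \Pi_{<0}(WF^*)$: the first piece contributes $\mathbf{T}^{(\mathbf{r})}_{UV^*}\mathbf{H}^{(\mathbf{r})}_W(F)$. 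For the second, apply the adjoint formula \eqref{Pi<0adjTorus} to rewrite $\Pi_{<0}(WF^*) = (\Pi_{\geq 0}(FW^*))^* - \widehat{WF^*}(0)$, which after multiplication by $UV^*$ and projection, together with Lemma \ref{ABL2+} to see $\Pi_{\geq 0}(FW^*)V \in L^2_+$, produces
\begin{equation*}
\Pi_{\geq 0}(UV^*\Pi_{<0}(WF^*)) = \mathbf{H}^{(\mathbf{r})}_U\bigl(\Pi_{\geq 0}(FW^*)V\bigr) - \mathbf{H}^{(\mathbf{r})}_U(V)\,\widehat{WF^*}(0).
\end{equation*}

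The key step is to rewrite $\Pi_{\geq 0}(FW^*)V$ by repeating the splitting strategy on $FW^*V$. Using \eqref{Pi<0adjTorus} on $\Pi_{<0}(FW^*)$ yields
\begin{equation*}
\Pi_{\geq 0}(FW^*)V = \mathbf{T}^{(\mathbf{l})}_{W^*V}(F) - \mathbf{H}^{(\mathbf{l})}_V\mathbf{H}^{(\mathbf{r})}_W(F) + \widehat{FW^*}(0)V.
\end{equation*}
Applying $\mathbf{H}^{(\mathbf{r})}_U$ and using $\widehat{FW^*}(0)^* = \widehat{WF^*}(0)$ together with \eqref{matrixXL^2}, the extra constant-matrix term $\mathbf{H}^{(\mathbf{r})}_U(\widehat{FW^*}(0)V) = \mathbf{H}^{(\mathbf{r})}_U(V)\widehat{WF^*}(0)$ exactly cancels the stray term above, leaving $\mathbf{H}^{(\mathbf{r})}_U\mathbf{T}^{(\mathbf{l})}_{W^*V}(F) - \mathbf{H}^{(\mathbf{r})}_U\mathbf{H}^{(\mathbf{l})}_V\mathbf{H}^{(\mathbf{r})}_W(F)$, which combined with the first piece yields the first identity of \eqref{HUVWfor}.

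The $\mathbf{H}^{(\mathbf{l})}$ identity follows by conjugating with the transpose transform $\mathfrak{T}$: one verifies $\mathfrak{T}\mathbf{T}^{(\mathbf{r})}_A\mathfrak{T} = \mathbf{T}^{(\mathbf{l})}_{A^{\mathrm{T}}}$ and $\mathfrak{T}\mathbf{T}^{(\mathbf{l})}_B\mathfrak{T} = \mathbf{T}^{(\mathbf{r})}_{B^{\mathrm{T}}}$, then applies the first identity to the relabelled triple $(W^{\mathrm{T}}, V^{\mathrm{T}}, U^{\mathrm{T}})$ and invokes \eqref{TconjHrHl}; every factor transposes into the expected form (for instance $(W^{\mathrm{T}}(V^{\mathrm{T}})^*)^{\mathrm{T}} = V^*W$). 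The $\mathbf{K}$-versions are obtained by right-multiplying the two $\mathbf{H}$-identities by $\mathbf{S}$, using $\mathbf{H}^{(\mathbf{r})}_X\mathbf{S} = \mathbf{K}^{(\mathbf{r})}_X$ and $\mathbf{H}^{(\mathbf{l})}_X\mathbf{S} = \mathbf{K}^{(\mathbf{l})}_X$, and handling the inner Toeplitz factors via the commutator formulas \eqref{ForCommTSS*}. Finally, each commutator identity in \eqref{commS*Toep(rlr)} is a reformulation of ``$\mathbf{K}$-identity minus $\mathbf{S}^*\cdot$($\mathbf{H}$-identity)'' whose rank-one discrepancy is supplied by \eqref{rel2K2Hfor}; alternatively, it can be checked directly from \eqref{ForCommTSS*} and \eqref{rel2K2Hfor} without invoking \eqref{HUVWfor}. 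The main obstacle throughout is the bookkeeping of the 0-th Fourier-mode terms in the first splitting: one must verify that the two contributions $\mathbf{H}^{(\mathbf{r})}_U(V)\widehat{WF^*}(0)$ appear with opposite signs so that they cancel cleanly and the Hankel identity closes.
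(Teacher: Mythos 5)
Your proposal is correct. The first $\mathbf{H}^{(\mathbf{r})}$-identity is proved exactly as in the paper: the same two applications of \eqref{Pi<0adjTorus}, the intermediate formula $\Pi_{\geq 0}(FW^*)V=\mathbf{T}^{(\mathbf{l})}_{W^*V}(F)-\mathbf{H}^{(\mathbf{l})}_V\mathbf{H}^{(\mathbf{r})}_W(F)+\widehat{FW^*}(0)V$, and the crucial cancellation $\mathbf{H}^{(\mathbf{r})}_U(\widehat{FW^*}(0)V)=\Pi_{\geq 0}(UV^*)\widehat{WF^*}(0)$. Where you diverge is the second $\mathbf{H}^{(\mathbf{l})}$-identity: you obtain it from the first by conjugating with $\mathfrak{T}$ applied to the relabelled triple $(W^{\mathrm{T}},V^{\mathrm{T}},U^{\mathrm{T}})$, using \eqref{TconjHrHl} and the easily verified $\mathfrak{T}\mathbf{T}^{(\mathbf{r})}_A\mathfrak{T}=\mathbf{T}^{(\mathbf{l})}_{A^{\mathrm{T}}}$, whereas the paper redoes the direct computation (via formulas \eqref{HlPiUV*WG} and \eqref{VPiU*G}). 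Your transpose shortcut is clean and spares a second splitting argument; the cost is that you must track that $\mathfrak{T}$ is $\mathbb{C}$-linear (so it conjugates antilinear Hankels to antilinear Hankels) and check the matrix-dimension bookkeeping, both of which go through. For the $\mathbf{K}$-versions, the paper left-multiplies the $\mathbf{H}$-identities by $\mathbf{S}^*$ and closes with the first and fourth commutator formulas of \eqref{commS*Toep(rlr)}, which it proves directly from \eqref{ForCommTSS*}, \eqref{KulGV*0mode}, and \eqref{rel2K2Hfor}; you right-multiply by $\mathbf{S}$, which instead requires the second and third commutator formulas. Both routes are equivalent, but be explicit about the logical order: the commutator formula you invoke to promote $\mathbf{H}$ to $\mathbf{K}$ must first be established by the direct check from \eqref{ForCommTSS*} and \eqref{rel2K2Hfor} (which you correctly list as an option), not derived afterward from the $\mathbf{K}$-identity, or you would run in a circle. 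With that ordering made explicit, the argument is complete.
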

\begin{proof}
If $F \in L^2_+(\mathbb{T}; \mathbb{C}^{d \times Q})$, since $UV^*W \in H^1(\mathbb{T}; \mathbb{C}^{M \times Q})$, we have $\Pi_{<0}(UV^* W)F^* \in L^2_-(\mathbb{T}; \mathbb{C}^{M \times d})$ by Lemma $\ref{AB*L2-}$. Formula \eqref{Pi<0adjTorus} yields that $\Pi_{<0}(WF^*) = \left(\Pi_{\geq 0}(F W^*) \right)^* -\widehat{WF^*}(0) \in L^2_-(\mathbb{T}; \mathbb{C}^{P \times d})$. Then
\begin{equation}\label{Hrpiuvw1}
\mathbf{H}^{(\mathbf{r})}_{\Pi_{\geq 0}(UV^*W)} (F) = \Pi_{\geq 0}(UV^* W F^*) =  \mathbf{T}^{(\mathbf{r})}_{UV^*}\mathbf{H}^{(\mathbf{r})}_W(F) + \mathbf{H}^{(\mathbf{r})}_{U} \left( \Pi_{\geq 0}(FW^*)V\right) -\Pi_{\geq 0}(UV^*)\widehat{WF^*}(0).
\end{equation}Using \eqref{Pi<0adjTorus} again, we obtain $\Pi_{\geq 0}(FW^*) = FW^* - \left(\Pi_{\geq 0}(WF^*)\right)^* + \widehat{F W^*}(0) \in L^2(\mathbb{T}; \mathbb{C}^{d \times P})$. Then 
\begin{equation}\label{PiFW*V}
\begin{split}
 \Pi_{\geq 0}(FW^*)V  =\Pi_{\geq 0} \left( \Pi_{\geq 0}(FW^*)V \right) = \mathbf{T}^{(\mathbf{l})}_{W^* V}(F) - \mathbf{H}^{(\mathbf{l})}_{V} \mathbf{H}^{(\mathbf{r})}_{W}(F) +   \widehat{F W^*}(0)V \in L^2(\mathbb{T}; \mathbb{C}^{d \times N}),
\end{split}
\end{equation}by using Lemma $\ref{ABL2+}$. Since $\widehat{F W^*}(0) = \left( \widehat{WF^*}(0) \right)^* \in \mathbb{C}^{d \times P}$, formula \eqref{matrixXL^2} implies that 
\begin{equation}\label{HUFW^*0V}
\mathbf{H}^{(\mathbf{r})}_{U} \left( \widehat{F W^*}(0)V\right) = \Pi_{\geq 0}(UV^*)\widehat{WF^*}(0) \in H^s_+(\mathbb{T}; \mathbb{C}^{M \times d}).
\end{equation}Plugging formulas \eqref{PiFW*V} and \eqref{HUFW^*0V} into  \eqref{Hrpiuvw1}, we obtain the first formula of \eqref{HUVWfor}.\\

\noindent If $G \in L^2_+(\mathbb{T}; \mathbb{C}^{M \times d})$, then $G^* U = \Pi_{\geq 0}(G^* U) + \left(\Pi_{\geq 0}(U^* G) \right)^* -\widehat{G^* U}(0) \in L^2 (\mathbb{T}; \mathbb{C}^{d \times N})$ by \eqref{Pi<0adjTorus}. Lemma $\ref{A*BL2-}$ implies that $G^* \Pi_{<0}(UV^* W) \in L^2_-(\mathbb{T}; \mathbb{C}^{d \times Q})$. As a consequence, we have
\begin{equation}\label{HlPiUV*WG}
\mathbf{H}^{(\mathbf{l})}_{\Pi_{\geq 0}(UV^*W)} (G) = \Pi_{\geq 0}( G^* UV^* W) =  \mathbf{T}^{(\mathbf{l})}_{V^* W}\mathbf{H}^{(\mathbf{l})}_U(G) + \mathbf{H}^{(\mathbf{l})}_{W} \left(V \Pi_{\geq 0}(U^*G) -V \widehat{U^* G}(0)\right).
\end{equation}because \eqref{matrixXL^2} yields $\mathbf{H}^{(\mathbf{l})}_{W} \left( V \widehat{U^* G}(0)\right) = \widehat{G^* U}(0)\Pi_{\geq 0}(V^*W)$. Lemma $\ref{ABL2+}$ and \eqref{Pi<0adjTorus} yield
\begin{equation}\label{VPiU*G}
V \Pi_{\geq 0}(U^*G) = \Pi_{\geq 0} \left(V \Pi_{\geq 0}(U^*G) \right) = \mathbf{T}^{(\mathbf{r})}_{V U^*}(G) - \mathbf{H}^{(\mathbf{r})}_{V}\mathbf{H}^{(\mathbf{l})}_{U}(G) + V \widehat{U^* G}(0) \in L^2(\mathbb{T}; \mathbb{C}^{P \times d}).
\end{equation}Plugging formula \eqref{VPiU*G} into \eqref{HlPiUV*WG}, we obtain the second formula of \eqref{HUVWfor}.\\

\noindent  Now we turn to prove the commutator formulas \eqref{commS*Toep(rlr)}. If $F\in L^2(\mathbb{T}; \mathbb{C}^{d \times Q})$, \eqref{matrixXL^2} and \eqref{ForCommTSS*} imply that $\mathbf{H}^{(\mathbf{r})}_{U} \left(\mathbf{e}_1 \widehat{FW^*}(0)V \right) = \Pi_{\geq 0}\left(\mathbf{e}_{-1} UV^* \right) \widehat{WF^*}(0)=\mathbf{S}^* (U V^*) \left(\mathbf{H}^{(\mathbf{r})}_{W}(F)\right)^{\wedge}(0)=\left[\mathbf{S}^* , \mathbf{T}^{(\mathbf{r})}_{U V^*}\right]\mathbf{H}^{(\mathbf{r})}_{W}(F)$. We have $\mathbf{K}^{(\mathbf{r})}_U \left( \mathbf{H}^{(\mathbf{l})}_V \mathbf{H}^{(\mathbf{r})}_W  - \mathbf{K}^{(\mathbf{l})}_V \mathbf{K}^{(\mathbf{r})}_W\right) (F) = \mathbf{H}^{(\mathbf{r})}_U \mathbf{S}\left( \widehat{FW^*}(0)V \right) = \left[\mathbf{S}^* , \mathbf{T}^{(\mathbf{r})}_{U V^*}\right]\mathbf{H}^{(\mathbf{r})}_{W}(F)$ by using \eqref{rel2K2Hfor}. If $G \in L^2_+(\mathbb{T}; \mathbb{C}^{M \times d})$, Lemma $\ref{AB*L2-}$ yields that $\Pi_{<0}\left(\mathbf{e}_{-1} G^*  U\right) V^* \in L^2_-(\mathbb{T}; \mathbb{C}^{d \times P})$, then
\begin{equation}\label{KulGV*0mode}
\Pi_{\geq 0}\left(\mathbf{K}^{(\mathbf{l})}_{U}(G) V^* \right)=\Pi_{\geq 0}\left(\mathbf{e}_{-1}G^* U V^* \right) \; \Rightarrow \;  ( \mathbf{K}^{(\mathbf{l})}_{U}(G) V^*)^{\wedge}(0) = (G^* U V^* )^{\wedge}(1).
\end{equation}Thanks to formula \eqref{ForCommTSS*}, \eqref{KulGV*0mode} and \eqref{rel2K2Hfor}, we have
\begin{equation*}
\begin{split}
\mathbf{H}^{(\mathbf{l})}_{W} \left[\mathbf{T}^{(\mathbf{r})}_{V U^*}, \mathbf{S} \right](G) = & \mathbf{H}^{(\mathbf{l})}_{W} (\left(V U^* G )^{\wedge}(-1) \right) = (G^* U V^* )^{\wedge}(1) W = ( \mathbf{K}^{(\mathbf{l})}_{U}(G) V^*)^{\wedge}(0) W \\
=& \left(\mathbf{H}^{(\mathbf{l})}_W \mathbf{H}^{(\mathbf{r})}_V  -  \mathbf{K}^{(\mathbf{l})}_W \mathbf{K}^{(\mathbf{r})}_V \right) \mathbf{K}^{(\mathbf{l})}_U (G).
\end{split}
\end{equation*}The first and the last formula of \eqref{commS*Toep(rlr)} are obtained. Together with the first two formulas of \eqref{HUVWfor}, we can deduce the last two formulas of \eqref{HUVWfor}. The second and the third formulas of \eqref{commS*Toep(rlr)} can be obtained by either comparing the first two formulas and the last two formulas of \eqref{HUVWfor} or following the same idea as the proof of the first and the last formula of \eqref{commS*Toep(rlr)} by using \eqref{ForCommTSS*} and \eqref{rel2K2Hfor}.
\end{proof}

\begin{proof}[Proof of theorem $\ref{LaxPairThm}$]
Given  $s>\frac{1}{2}$, set $V=W=U\in H^s_+(\mathbb{T};  \mathbb{C}^{M\times N})$ in formula \eqref{HUVWfor}. Then
\begin{equation}
\begin{split}
& \mathbf{H}^{(\mathbf{r})}_{U}\mathbf{H}^{(\mathbf{l})}_{\Pi_{\geq 0}(U U^* U)} - \mathbf{H}^{(\mathbf{r})}_{\Pi_{\geq 0}(U U^* U)} \mathbf{H}^{(\mathbf{l})}_{U} = \left[\mathbf{H}^{(\mathbf{r})}_{U}\mathbf{H}^{(\mathbf{l})}_{U}, \; \mathbf{T}^{(\mathbf{r})}_{U U^*}\right] \in \mathcal{B}(L^2_+(\mathbb{T};  \mathbb{C}^{M\times d})); \\
& \mathbf{H}^{(\mathbf{l})}_{U}\mathbf{H}^{(\mathbf{r})}_{\Pi_{\geq 0}(U U^* U)} - \mathbf{H}^{(\mathbf{l})}_{\Pi_{\geq 0}(U U^* U)} \mathbf{H}^{(\mathbf{r})}_{U} = \left[\mathbf{H}^{(\mathbf{l})}_{U}\mathbf{H}^{(\mathbf{r})}_{U}, \; \mathbf{T}^{(\mathbf{l})}_{U^* U}\right] \in \mathcal{B}(L^2_+(\mathbb{T};  \mathbb{C}^{d\times N})); \\
& \mathbf{K}^{(\mathbf{r})}_{U}\mathbf{K}^{(\mathbf{l})}_{\Pi_{\geq 0}(U U^* U)} - \mathbf{K}^{(\mathbf{r})}_{\Pi_{\geq 0}(U U^* U)} \mathbf{K}^{(\mathbf{l})}_{U} = \left[\mathbf{K}^{(\mathbf{r})}_{U}\mathbf{K}^{(\mathbf{l})}_{U}, \; \mathbf{T}^{(\mathbf{r})}_{U U^*}\right] \in \mathcal{B}(L^2_+(\mathbb{T};  \mathbb{C}^{M\times d})); \\
& \mathbf{K}^{(\mathbf{l})}_{U}\mathbf{K}^{(\mathbf{r})}_{\Pi_{\geq 0}(U U^* U)} - \mathbf{K}^{(\mathbf{l})}_{\Pi_{\geq 0}(U U^* U)} \mathbf{K}^{(\mathbf{r})}_{U} = \left[\mathbf{K}^{(\mathbf{l})}_{U}\mathbf{K}^{(\mathbf{r})}_{U}, \; \mathbf{T}^{(\mathbf{l})}_{U^* U}\right] \in \mathcal{B}(L^2_+(\mathbb{T};  \mathbb{C}^{d\times N})).
\end{split}
\end{equation}We conclude by the $\mathbb{C}$-antilinearity of the Hankel operators defined in \eqref{rlHankel} and \eqref{rlShiftHankel}. 
\end{proof}

\begin{rem}
Thanks to formula \eqref{rel2K&T-TTfor}, $(\mathbf{K}^{(\mathbf{r})}_{U}\mathbf{K}^{(\mathbf{l})}_{U}, -i \mathbf{T}^{(\mathbf{r})}_{U} \mathbf{T}^{(\mathbf{r})}_{ U^*} )$ and 
 $(\mathbf{K}^{(\mathbf{l})}_{U}\mathbf{K}^{(\mathbf{r})}_{U}, -i \mathbf{T}^{(\mathbf{l})}_{U} \mathbf{T}^{(\mathbf{l})}_{U^*})$ are also Lax pairs of the matrix Szeg\H{o} equation \eqref{MSzego}. 
\end{rem}

\section{The explicit formula}\label{SecExplicit}
\noindent This section is dedicated to establish the explicit formula of solutions to \eqref{MSzego}. Thanks to $Theorem \ref{LaxPairThm}$, the matrix Szeg\H{o} equation \eqref{MSzego} has at least $4$ Lax pairs: $(\mathbf{H}^{(\mathbf{r})}_{U}\mathbf{H}^{(\mathbf{l})}_{U}, -i \mathbf{T}^{(\mathbf{r})}_{U U^*} )$, $(\mathbf{H}^{(\mathbf{l})}_{U}\mathbf{H}^{(\mathbf{r})}_{U}, -i \mathbf{T}^{(\mathbf{l})}_{U^* U} )$, $(\mathbf{K}^{(\mathbf{r})}_{U}\mathbf{K}^{(\mathbf{l})}_{U}, -i \mathbf{T}^{(\mathbf{r})}_{U U^*} )$,
 $(\mathbf{K}^{(\mathbf{l})}_{U}\mathbf{K}^{(\mathbf{r})}_{U}, -i \mathbf{T}^{(\mathbf{l})}_{U^* U} )$. Then we have the following unitary equivalence corollary.

\begin{cor}\label{corUniEquivSzego}
Given $M,N,d \in \mathbb{N}_+$ and $s>\tfrac{1}{2}$, if $U \in C^{\infty}(\mathbb{R}; H^s_+(\mathbb{T};  \mathbb{C}^{M\times N}))$ solves equation \eqref{MSzego}, let $\mathbf{W}\in C^1(\mathbb{R}; \mathcal{B}(L^2_+(\mathbb{T};\mathbb{C}^{M \times d})))$ and $\mathscr{W}\in C^1(\mathbb{R}; \mathcal{B}(L^2_+(\mathbb{T};\mathbb{C}^{d \times N})))$ denote the unique solution to the following equation:
\begin{equation}\label{WODE}
\begin{split}
\tfrac{\mathrm{d}}{\mathrm{d}t}\mathbf{W}(t) = -i \mathbf{T}^{(\mathbf{r})}_{U(t) U(t)^*}\mathbf{W}(t), \quad \tfrac{\mathrm{d}}{\mathrm{d}t}\mathscr{W}(t) = -i \mathbf{T}^{(\mathbf{l})}_{U(t)^* U(t)}\mathscr{W}(t) 
\end{split}
\end{equation}with initial data $\mathbf{W}(0)= \mathrm{id}_{L^2_+(\mathbb{T};\mathbb{C}^{M \times d}))}$ and $\mathscr{W}(0)= \mathrm{id}_{L^2_+(\mathbb{T};\mathbb{C}^{d \times N}))}$. Then, for any $t \in \mathbb{R}$, $\mathbf{W}(t)$ and $\mathscr{W}(t)$ are both unitary operators and the following identities of unitary equivalences hold:
\begin{equation}\label{UniEquiHHKK}
\begin{split}
& \mathbf{H}^{(\mathbf{r})}_{U(t)}\mathbf{H}^{(\mathbf{l})}_{U(t)} = \mathbf{W}(t)\mathbf{H}^{(\mathbf{r})}_{U(0)}\mathbf{H}^{(\mathbf{l})}_{U(0)} \mathbf{W}(t)^*; \quad \mathbf{H}^{(\mathbf{l})}_{U(t)}\mathbf{H}^{(\mathbf{r})}_{U(t)} = \mathscr{W}(t)\mathbf{H}^{(\mathbf{l})}_{U(0)}\mathbf{H}^{(\mathbf{r})}_{U(0)} \mathscr{W}(t)^*;\\
& \mathbf{K}^{(\mathbf{r})}_{U(t)}\mathbf{K}^{(\mathbf{l})}_{U(t)} = \mathbf{W}(t)\mathbf{K}^{(\mathbf{r})}_{U(0)}\mathbf{K}^{(\mathbf{l})}_{U(0)} \mathbf{W}(t)^*;  \quad \mathbf{K}^{(\mathbf{l})}_{U(t)}\mathbf{K}^{(\mathbf{r})}_{U(t)} = \mathscr{W}(t)\mathbf{K}^{(\mathbf{l})}_{U(0)}\mathbf{K}^{(\mathbf{r})}_{U(0)} \mathscr{W}(t)^*.
\end{split}
\end{equation}
\end{cor}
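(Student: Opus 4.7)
The plan is to first construct $\mathbf{W}(t)$ and $\mathscr{W}(t)$ and establish their unitarity, then to verify the four identities in \eqref{UniEquiHHKK} by a Cauchy uniqueness argument applied to the Heisenberg-type evolution equations of Theorem \ref{LaxPairThm}.

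Since $U \in C^{\infty}(\mathbb{R}; H^s_+(\mathbb{T};\mathbb{C}^{M\times N}))$ with $s > \tfrac{1}{2}$, the Brezis--Gallou\"et inequality \eqref{BGMatrixIne} yields that $t \mapsto U(t)U(t)^*$ is smooth from $\mathbb{R}$ into $L^{\infty}(\mathbb{T}; \mathbb{C}^{M \times M})$, and similarly $t \mapsto U(t)^*U(t)$ is smooth into $L^{\infty}(\mathbb{T}; \mathbb{C}^{N \times N})$. Consequently the generators $t \mapsto -i\mathbf{T}^{(\mathbf{r})}_{U(t)U(t)^*}$ and $t \mapsto -i\mathbf{T}^{(\mathbf{l})}_{U(t)^*U(t)}$ are continuous (in fact smooth) into $\mathcal{B}(L^2_+)$, so Proposition \ref{CauchyThmODE} produces unique global solutions $\mathbf{W}$ and $\mathscr{W}$ to \eqref{WODE}. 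Because $(UU^*)^*=UU^*$ and $(U^*U)^*=U^*U$, the duality formulas \eqref{<TrG,A>}--\eqref{<TlF,B>} yield $(\mathbf{T}^{(\mathbf{r})}_{UU^*})^* = \mathbf{T}^{(\mathbf{r})}_{UU^*}$ and $(\mathbf{T}^{(\mathbf{l})}_{U^*U})^* = \mathbf{T}^{(\mathbf{l})}_{U^*U}$, so the generators in \eqref{WODE} are skew-adjoint. Differentiating $\mathbf{W}(t)\mathbf{W}(t)^*$ and $\mathbf{W}(t)^*\mathbf{W}(t)$ (and the analogues for $\mathscr{W}$), the two cross terms cancel and the products are constant, hence equal to the identity for every $t \in \mathbb{R}$. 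Thus $\mathbf{W}(t)$ and $\mathscr{W}(t)$ are unitary.

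For the first identity in \eqref{UniEquiHHKK}, I set $\mathbf{A}(t) := \mathbf{W}(t)^*\mathbf{H}^{(\mathbf{r})}_{U(t)}\mathbf{H}^{(\mathbf{l})}_{U(t)}\mathbf{W}(t)$. Using $\tfrac{\mathrm{d}}{\mathrm{d}t}\mathbf{W} = -i\mathbf{T}^{(\mathbf{r})}_{UU^*}\mathbf{W}$, the adjoint relation $\tfrac{\mathrm{d}}{\mathrm{d}t}\mathbf{W}^* = i\mathbf{W}^*\mathbf{T}^{(\mathbf{r})}_{UU^*}$, and the first identity of \eqref{4HeiLaxMSzego}, the Leibniz rule gives
\begin{equation*}
\tfrac{\mathrm{d}}{\mathrm{d}t} \mathbf{A}(t) = i\mathbf{W}^*\mathbf{T}^{(\mathbf{r})}_{UU^*}\mathbf{H}^{(\mathbf{r})}_U\mathbf{H}^{(\mathbf{l})}_U\mathbf{W} + i\mathbf{W}^*[\mathbf{H}^{(\mathbf{r})}_U\mathbf{H}^{(\mathbf{l})}_U, \mathbf{T}^{(\mathbf{r})}_{UU^*}]\mathbf{W} - i\mathbf{W}^*\mathbf{H}^{(\mathbf{r})}_U\mathbf{H}^{(\mathbf{l})}_U\mathbf{T}^{(\mathbf{r})}_{UU^*}\mathbf{W} = 0,
\end{equation*}
so $\mathbf{A}(t) = \mathbf{A}(0) = \mathbf{H}^{(\mathbf{r})}_{U(0)}\mathbf{H}^{(\mathbf{l})}_{U(0)}$, which rearranges to the first identity of \eqref{UniEquiHHKK}. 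The third identity (for $\mathbf{K}^{(\mathbf{r})}_U\mathbf{K}^{(\mathbf{l})}_U$, conjugated by the same $\mathbf{W}$) follows by a verbatim computation from the third identity of \eqref{4HeiLaxMSzego}, since the commutator there involves the same Toeplitz symbol $UU^*$. The two identities involving $\mathscr{W}$ are handled in exactly the same manner from the second and fourth identities of \eqref{4HeiLaxMSzego}.

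No substantive obstacle is expected. The only verifications are the self-adjointness of the Toeplitz generators, immediate from the Hermitian symmetry of $UU^*$ and $U^*U$, and the continuity in $t$ of these generators, immediate from $H^s \hookrightarrow L^\infty$ for $s>\tfrac{1}{2}$; once these are in place the result is a routine application of Cauchy uniqueness for linear ODEs in the Banach space $\mathcal{B}(L^2_+)$.
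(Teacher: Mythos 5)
Your argument is substantially the same as the paper's: construct $\mathbf{W},\mathscr{W}$ via Proposition~\ref{CauchyThmODE}, observe the generators are bounded self-adjoint so the propagators are unitary, then differentiate the conjugated operator and invoke the Lax identities \eqref{4HeiLaxMSzego}.

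One small imprecision in the unitarity step: differentiating $\mathbf{W}^*\mathbf{W}$ does indeed give a direct cancellation $i\mathbf{W}^*\mathbf{T}^{(\mathbf{r})}_{UU^*}\mathbf{W}-i\mathbf{W}^*\mathbf{T}^{(\mathbf{r})}_{UU^*}\mathbf{W}=0$, but differentiating $\mathbf{W}\mathbf{W}^*$ produces $-i\bigl[\mathbf{T}^{(\mathbf{r})}_{UU^*},\,\mathbf{W}\mathbf{W}^*\bigr]$, which does not cancel termwise. This vanishes only after you observe that $\mathbf{W}\mathbf{W}^*$ solves a linear ODE admitting the constant solution $\mathrm{id}$ and then appeal to Cauchy uniqueness, or after you separately note that the solution operator of a linear ODE in $\mathcal{B}(L^2_+)$ is automatically invertible (its inverse solves the adjoint equation $\tfrac{\mathrm{d}}{\mathrm{d}t}\mathbf{V}=i\mathbf{V}\mathbf{T}^{(\mathbf{r})}_{UU^*}$ with $\mathbf{V}(0)=\mathrm{id}$), so that isometry plus invertibility gives unitarity. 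The paper sidesteps this by showing $\mathbf{W}^*$ and $\mathbf{W}^{-1}$ satisfy the same linear Cauchy problem and coincide by uniqueness — a cleaner way to package the same fact. Apart from this, your proof matches the paper's line by line.
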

\begin{proof}
Let $\mathbb{X}_{MN}:= \mathcal{B}(L^2_+(\mathbb{T}; \mathbb{C}^{M \times N}))$, $\forall M,N \in \mathbb{N}_+$. Both $\mathcal{A}^{(\mathbf{r})}: t \in \mathbb{R} \mapsto \mathcal{A}^{(\mathbf{r})}(t)   \in \mathcal{B}(\mathbb{X}_{Md})$ and $\mathcal{A}^{(\mathbf{l})}: t \in \mathbb{R} \mapsto \mathcal{A}^{(\mathbf{l})}(t)  \in   \mathcal{B}(\mathbb{X}_{dN})$ are continuous, where  $\mathcal{A}^{(\mathbf{r})}(t): \mathbf{W}\in \mathbb{X}_{Md} \mapsto -i \mathbf{T}^{(\mathbf{r})}_{U(t) U(t)^*}\mathbf{W} \in \mathbb{X}_{Md}$ and $\mathcal{A}^{(\mathbf{l})}(t): \mathscr{W}\in \mathbb{X}_{dN} \mapsto -i \mathbf{T}^{(\mathbf{l})}_{ U(t)^* U(t)}\mathscr{W} \in \mathbb{X}_{dN}$. Then \eqref{WODE} admits a unique solution thanks to Proposition $\ref{CauchyThmODE}$. Since both $\mathbf{T}^{(\mathbf{r})}_{U(t) U(t)^*} \in \mathbb{X}_{Md}$ and $\mathbf{T}^{(\mathbf{l})}_{ U(t)^* U(t)}  \in \mathbb{X}_{dN}$ are self-adjoint operators, we have $\mathbf{W}(t)^* = \mathbf{W}(t)^{-1} \in \mathbb{X}_{Md}$ and $\mathscr{W}(t)^* = \mathscr{W}(t)^{-1} \in \mathbb{X}_{dN}$ by uniqueness argument in Proposition $\ref{CauchyThmODE}$. Then  \eqref{4HeiLaxMSzego} yields that $\tfrac{\mathrm{d}}{\mathrm{d}t}(\mathbf{W}(t)^*  \mathbf{H}^{(\mathbf{r})}_{U(t)}\mathbf{H}^{(\mathbf{l})}_{U(t)}\mathbf{W}(t))=\tfrac{\mathrm{d}}{\mathrm{d}t}(\mathbf{W}(t)^*  \mathbf{K}^{(\mathbf{r})}_{U(t)}\mathbf{K}^{(\mathbf{l})}_{U(t)}\mathbf{W}(t))= 0_{\mathbb{X}_{Md}}$ and $\tfrac{\mathrm{d}}{\mathrm{d}t}(\mathscr{W}(t)^*  \mathbf{H}^{(\mathbf{l})}_{U(t)}\mathbf{H}^{(\mathbf{r})}_{U(t)}\mathscr{W}(t)) =\tfrac{\mathrm{d}}{\mathrm{d}t}(\mathscr{W}(t)^*  \mathbf{K}^{(\mathbf{l})}_{U(t)}\mathbf{K}^{(\mathbf{r})}_{U(t)}\mathscr{W}(t)) = 0_{\mathbb{X}_{dN}}$.
\end{proof}

\noindent The following lemma gives the relation of the family of unitary operators $(\mathbf{W}(t))_{t \in \mathbb{R}}$ and the unitary groups $(e^{it \mathbf{K}^{(\mathbf{r})}_{U(0)}\mathbf{K}^{(\mathbf{l})}_{U(0)}})_{t \in \mathbb{R}}$ and $(e^{it \mathbf{H}^{(\mathbf{r})}_{U(0)}\mathbf{H}^{(\mathbf{l})}_{U(0)}})_{t \in \mathbb{R}}$, which allows to linearize the matrix Szeg\H{o} flow. 
\begin{lem}
Given $M,N,d \in \mathbb{N}_+$ and $s>\tfrac{1}{2}$, if $U \in C^{\infty}(\mathbb{R}; H^s_+(\mathbb{T};  \mathbb{C}^{M\times N}))$ solves equation \eqref{MSzego},  $\mathbf{W}\in C^1(\mathbb{R}; \mathcal{B}(L^2_+(\mathbb{T};\mathbb{C}^{M \times d})))$ and $\mathscr{W}\in C^1(\mathbb{R}; \mathcal{B}(L^2_+(\mathbb{T};\mathbb{C}^{d \times N})))$ are defined by \eqref{WODE} of Corollary $\ref{corUniEquivSzego}$. Then the following identities hold,  $\forall t \in \mathbb{R}$:
\begin{equation}\label{W*S*WHH}
\begin{split}
& \mathbf{W}(t)^* \mathbf{S}^*\mathbf{W}(t) \mathbf{H}^{(\mathbf{r})}_{U(0)}\mathbf{H}^{(\mathbf{l})}_{U(0)} = e^{it \mathbf{K}^{(\mathbf{r})}_{U(0)}\mathbf{K}^{(\mathbf{l})}_{U(0)}} \mathbf{S}^* e^{-it \mathbf{H}^{(\mathbf{r})}_{U(0)}\mathbf{H}^{(\mathbf{l})}_{U(0)}} \mathbf{H}^{(\mathbf{r})}_{U(0)}\mathbf{H}^{(\mathbf{l})}_{U(0)} \in \mathcal{B}(L^2_+(\mathbb{T};\mathbb{C}^{M \times d})); \\
& \mathscr{W}(t)^* \mathbf{S}^*\mathscr{W}(t) \mathbf{H}^{(\mathbf{l})}_{U(0)}\mathbf{H}^{(\mathbf{r})}_{U(0)} = e^{it \mathbf{K}^{(\mathbf{l})}_{U(0)}\mathbf{K}^{(\mathbf{r})}_{U(0)}} \mathbf{S}^* e^{-it \mathbf{H}^{(\mathbf{l})}_{U(0)}\mathbf{H}^{(\mathbf{r})}_{U(0)}} \mathbf{H}^{(\mathbf{l})}_{U(0)}\mathbf{H}^{(\mathbf{r})}_{U(0)} \in \mathcal{B}(L^2_+(\mathbb{T};\mathbb{C}^{d \times N})).
\end{split}
\end{equation}Moreover, for any constant matrices $P \in \mathbb{C}^{M \times d}$ and $Q \in \mathbb{C}^{d\times N}$, we have
\begin{equation}\label{W*PQSze}
\mathbf{W}(t)^*(P) = e^{it \mathbf{H}^{(\mathbf{r})}_{U(0)}\mathbf{H}^{(\mathbf{l})}_{U(0)}}(P) \in L^2_+(\mathbb{T}; \mathbb{C}^{M \times d}); \quad \mathscr{W}(t)^*(Q) = e^{it \mathbf{H}^{(\mathbf{l})}_{U(0)}\mathbf{H}^{(\mathbf{r})}_{U(0)}}(Q) \in L^2_+(\mathbb{T}; \mathbb{C}^{d\times N}).
\end{equation}We also have 
\begin{equation}\label{W*U}
\mathbf{W}(t)^* (U(t)) =\mathscr{W}(t)^* (U(t)) = U(0) \in H^s_+(\mathbb{T};  \mathbb{C}^{M\times N}).
\end{equation}
\end{lem}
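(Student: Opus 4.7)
The plan is to establish the three identities in turn, each one by constructing a Cauchy problem in a Banach space and invoking the uniqueness statement of Proposition~\ref{CauchyThmODE}. Throughout, I write $\mathbf{H}_0 := \mathbf{H}^{(\mathbf{r})}_{U(0)}\mathbf{H}^{(\mathbf{l})}_{U(0)}$ and $\mathbf{K}_0 := \mathbf{K}^{(\mathbf{r})}_{U(0)}\mathbf{K}^{(\mathbf{l})}_{U(0)}$, and use that the self-adjointness of $\mathbf{T}^{(\mathbf{r})}_{U(t)U(t)^*}$ and $\mathbf{T}^{(\mathbf{l})}_{U(t)^*U(t)}$ gives $\tfrac{\mathrm{d}}{\mathrm{d}t}\mathbf{W}(t)^* = i\mathbf{W}(t)^*\mathbf{T}^{(\mathbf{r})}_{U(t)U(t)^*}$ and the analogue for $\mathscr{W}(t)^*$.

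I would start with \eqref{W*U}, which is the simplest. Apply the product rule to $h(t) := \mathbf{W}(t)^*(U(t))$. The first term is $i\mathbf{W}(t)^*\mathbf{T}^{(\mathbf{r})}_{U(t)U(t)^*}(U(t)) = i\mathbf{W}(t)^*\Pi_{\geq 0}(U(t)U(t)^*U(t))$, while the second term, using the Szeg\H{o} equation \eqref{MSzego}, is exactly $-i\mathbf{W}(t)^*\Pi_{\geq 0}(U(t)U(t)^*U(t))$. Hence $h'(t)=0$ and $h(t)=h(0)=U(0)$. The corresponding computation for $\mathscr{W}$ is identical modulo swapping $\mathbf{T}^{(\mathbf{r})}_{UU^*}$ for $\mathbf{T}^{(\mathbf{l})}_{U^*U}$, noting that both send $U(t)$ to $\Pi_{\geq 0}(UU^*U)$.

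Next, for \eqref{W*PQSze} with a constant $P\in\mathbb{C}^{M\times d}$, formula \eqref{HHP} gives $\mathbf{T}^{(\mathbf{r})}_{U(t)U(t)^*}(P) = \mathbf{H}^{(\mathbf{r})}_{U(t)}\mathbf{H}^{(\mathbf{l})}_{U(t)}(P)$. Setting $f(t) := \mathbf{W}(t)^*(P)$ and using the intertwining $\mathbf{W}(t)^*\mathbf{H}^{(\mathbf{r})}_{U(t)}\mathbf{H}^{(\mathbf{l})}_{U(t)} = \mathbf{H}_0\mathbf{W}(t)^*$ from \eqref{UniEquiHHKK}, I obtain $f'(t) = i\mathbf{H}_0 f(t)$ with $f(0)=P$, so $f(t) = e^{it\mathbf{H}_0}(P)$ by uniqueness. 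The same strategy handles $\mathscr{W}(t)^*(Q)$.

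The main content, and the step I expect to be the most delicate, is \eqref{W*S*WHH}. Both sides at $t=0$ equal $\mathbf{S}^*\mathbf{H}_0$, so it is enough to show that $A(t) := \mathbf{W}(t)^*\mathbf{S}^*\mathbf{W}(t)\mathbf{H}_0$ and $B(t) := e^{it\mathbf{K}_0}\mathbf{S}^*e^{-it\mathbf{H}_0}\mathbf{H}_0$ satisfy the same linear ODE on $\mathcal{B}(L^2_+(\mathbb{T};\mathbb{C}^{M\times d}))$. For $B(t)$, commuting $\mathbf{H}_0$ past $e^{-it\mathbf{H}_0}$ and differentiating gives directly $B'(t) = i\mathbf{K}_0 B(t) - iB(t)\mathbf{H}_0$. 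For $A(t)$, the product rule yields $A'(t) = i\mathbf{W}(t)^*[\mathbf{T}^{(\mathbf{r})}_{U(t)U(t)^*},\mathbf{S}^*]\mathbf{W}(t)\mathbf{H}_0$; using $\mathbf{W}(t)\mathbf{H}_0 = \mathbf{H}^{(\mathbf{r})}_{U(t)}\mathbf{H}^{(\mathbf{l})}_{U(t)}\mathbf{W}(t)$ pushes an extra $\mathbf{H}^{(\mathbf{r})}_{U(t)}$ into the commutator, where the key identity \eqref{commS*Toep(rlr)} with $V=W=U(t)$ applies to give
\[
[\mathbf{T}^{(\mathbf{r})}_{U(t)U(t)^*},\mathbf{S}^*]\mathbf{H}^{(\mathbf{r})}_{U(t)}\mathbf{H}^{(\mathbf{l})}_{U(t)} = -\mathbf{S}^*(\mathbf{H}^{(\mathbf{r})}_{U(t)}\mathbf{H}^{(\mathbf{l})}_{U(t)})^2 + \mathbf{K}^{(\mathbf{r})}_{U(t)}\mathbf{K}^{(\mathbf{l})}_{U(t)}\mathbf{S}^*\mathbf{H}^{(\mathbf{r})}_{U(t)}\mathbf{H}^{(\mathbf{l})}_{U(t)},
\]
after using $\mathbf{K}^{(\mathbf{r})}_{U(t)}\mathbf{H}^{(\mathbf{l})}_{U(t)} = \mathbf{S}^*\mathbf{H}^{(\mathbf{r})}_{U(t)}\mathbf{H}^{(\mathbf{l})}_{U(t)}$. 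Inserting $\mathbf{W}\mathbf{W}^* = \mathrm{id}$ between the factors and applying \eqref{UniEquiHHKK} on each block converts the first term into $-iA(t)\mathbf{H}_0$ and the second into $i\mathbf{K}_0 A(t)$, so $A'(t) = i\mathbf{K}_0 A(t) - iA(t)\mathbf{H}_0$. The map $X \mapsto i\mathbf{K}_0 X - iX\mathbf{H}_0$ is a bounded linear operator on $\mathcal{B}(L^2_+(\mathbb{T};\mathbb{C}^{M\times d}))$, so Proposition~\ref{CauchyThmODE} gives $A=B$. The $(\mathbf{l},\mathbf{r})$ companion identity follows by the same argument, replacing \eqref{commS*Toep(rlr)}'s first line by its third line.
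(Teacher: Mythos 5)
Your proof is correct and follows essentially the same route as the paper: differentiate $\mathbf{W}(t)^*\mathbf{S}^*\mathbf{W}(t)\mathbf{H}_0$, reduce via the intertwining \eqref{UniEquiHHKK} and the commutator identity \eqref{commS*Toep(rlr)} (specialized to $V=W=U$) to the linear ODE $\tfrac{\mathrm{d}}{\mathrm{d}t}\mathfrak{Y} = i\mathbf{K}_0\mathfrak{Y} - i\mathfrak{Y}\mathbf{H}_0$, and integrate using Proposition \ref{CauchyThmODE}; the treatments of \eqref{W*PQSze} via \eqref{HHP} and of \eqref{W*U} via \eqref{U'(t)=A((U(t)))} also match. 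Your intermediate rewriting of the commutator identity (pulling a factor $\mathbf{S}^*$ through $\mathbf{K}^{(\mathbf{r})}_U\mathbf{H}^{(\mathbf{l})}_U = \mathbf{S}^*\mathbf{H}^{(\mathbf{r})}_U\mathbf{H}^{(\mathbf{l})}_U$) is a harmless algebraic variant of the paper's intermediate step, and the uniqueness argument you make explicit is what the paper means by ``integrating'' the ODE.
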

\begin{proof}
Set $\mathfrak{Y}(t):= \mathbf{W}(t)^* \mathbf{S}^*\mathbf{W}(t) \mathbf{H}^{(\mathbf{r})}_{U(0)}\mathbf{H}^{(\mathbf{l})}_{U(0)}$ and $\mathscr{Y}(t):=\mathscr{W}(t)^* \mathbf{S}^*\mathscr{W}(t) \mathbf{H}^{(\mathbf{l})}_{U(0)}\mathbf{H}^{(\mathbf{r})}_{U(0)}$, $\forall t \in \mathbb{R}$. Then formulas \eqref{commS*Toep(rlr)} and \eqref{UniEquiHHKK} yield that
\begin{equation}\label{Y'Yt'Sze}
\begin{split}
&\tfrac{\mathrm{d}}{\mathrm{d}t}\mathfrak{Y}(t)= -i \mathbf{W}(t)^* [\mathbf{S}^*, \mathbf{T}^{(\mathbf{r})}_{U(t) U(t)^*}]\mathbf{H}^{(\mathbf{r})}_{U(t)}\mathbf{H}^{(\mathbf{l})}_{U(t)} \mathbf{W}(t) \in \mathcal{B}(L^2_+(\mathbb{T};\mathbb{C}^{M \times d}))\\
 = & i \mathbf{W}(t)^* \mathbf{K}^{(\mathbf{r})}_{U(t)}\left(\mathbf{K}^{(\mathbf{l})}_{U(t)} \mathbf{K}^{(\mathbf{r})}_{U(t)} - \mathbf{H}^{(\mathbf{l})}_{U(t)} \mathbf{H}^{(\mathbf{r})}_{U(t)}\right)\mathbf{H}^{(\mathbf{l})}_{U(t)} \mathbf{W}(t) = i \mathbf{K}^{(\mathbf{r})}_{U(0)} \mathbf{K}^{(\mathbf{l})}_{U(0)} \mathfrak{Y}(t) -i \mathfrak{Y}(t) \mathbf{H}^{(\mathbf{r})}_{U(0)} \mathbf{H}^{(\mathbf{l})}_{U(0)};\\
&\tfrac{\mathrm{d}}{\mathrm{d}t}\mathscr{Y}(t)= -i \mathscr{W}(t)^* [\mathbf{S}^*, \mathbf{T}^{(\mathbf{l})}_{U(t)^* U(t)}]\mathbf{H}^{(\mathbf{l})}_{U(t)}\mathbf{H}^{(\mathbf{r})}_{U(t)} \mathscr{W}(t) \in \mathcal{B}(L^2_+(\mathbb{T};\mathbb{C}^{d \times N}))\\
 = & i \mathscr{W}(t)^* \mathbf{K}^{(\mathbf{l})}_{U(t)}\left(\mathbf{K}^{(\mathbf{r})}_{U(t)} \mathbf{K}^{(\mathbf{l})}_{U(t)} - \mathbf{H}^{(\mathbf{r})}_{U(t)} \mathbf{H}^{(\mathbf{l})}_{U(t)}\right)\mathbf{H}^{(\mathbf{r})}_{U(t)} \mathscr{W}(t) = i \mathbf{K}^{(\mathbf{l})}_{U(0)} \mathbf{K}^{(\mathbf{r})}_{U(0)} \mathscr{Y}(t) -i \mathscr{Y}(t) \mathbf{H}^{(\mathbf{l})}_{U(0)} \mathbf{H}^{(\mathbf{r})}_{U(0)}.\\
\end{split}
\end{equation}Then \eqref{W*S*WHH} is obtained by integrating \eqref{Y'Yt'Sze} and \eqref{HExpHH}. Formula \eqref{W*U} is obtained by \eqref{WODE} and the following expression of the matrix Szeg\H{o} equation \eqref{MSzego}:
\begin{equation}\label{U'(t)=A((U(t)))}
\partial_t U(t) = -i \mathbf{T}^{(\mathbf{r})}_{U(t) U(t)^*}(U(t)) = -i \mathbf{T}^{(\mathbf{l})}_{U(t)^* U(t)}(U(t)) \in H^s_+(\mathbb{T};  \mathbb{C}^{M\times N}).
\end{equation}If $P \in \mathbb{C}^{M \times d}$ and $Q \in \mathbb{C}^{d\times N}$, then $\partial_t (\mathbf{W}(t)^* (P))=i \mathbf{W}(t)^* \mathbf{H}^{(\mathbf{r})}_{U(t)} \mathbf{H}^{(\mathbf{l})}_{U(t)} (P) = i  \mathbf{H}^{(\mathbf{r})}_{U(0)} \mathbf{H}^{(\mathbf{l})}_{U(0)} \mathbf{W}(t)^* (P)$ and $\partial_t (\mathscr{W}(t)^* (Q))=i \mathscr{W}(t)^* \mathbf{H}^{(\mathbf{l})}_{U(t)} \mathbf{H}^{(\mathbf{r})}_{U(t)} (Q) = i  \mathbf{H}^{(\mathbf{l})}_{U(0)} \mathbf{H}^{(\mathbf{r})}_{U(0)} \mathscr{W}(t)^* (Q)$ by \eqref{WODE} and \eqref{HHP}.
\end{proof}

\noindent Finally, we act these three families of unitary operators on the shift operator $\mathbf{S}^*$ and complete the proof by conjugation acting method.
\begin{proof}[Proof of theorem $\ref{ExpForThm}$]
At first, assume that $U_0= U(0) \in \mathcal{M}_{\mathrm{FR}}^{M \times N}$ and $\mathbf{R}:=\dim_{\mathbb{C}} \mathrm{Im}\mathbf{H}^{(\mathbf{r})}_{U_0} \in \mathbb{N}$. Proposition $\ref{WKroneckerM*N}$ and the unitary equivalence property \eqref{UniEquiHHKK} yield that $U(t) \in \mathcal{M}_{\mathrm{FR}}^{M \times N}$ and   $\mathbb{V} :=\mathrm{Im}\mathbf{H}^{(\mathbf{r})}_{U_0} =  \mathrm{Im}\mathbf{H}^{(\mathbf{r})}_{U_0}\mathbf{H}^{(\mathbf{l})}_{U_0}$ is an $\mathbf{R}$-dimensional subspace of $L^2_+(\mathbb{T};  \mathbb{C}^{M\times N})$ such that
\begin{equation} 
\mathbf{W}(t)^* \mathbf{S}^*\mathbf{W}(t)\big|_{\mathbb{V}}   = e^{it \mathbf{K}^{(\mathbf{r})}_{U_0}\mathbf{K}^{(\mathbf{l})}_{U_0}} \mathbf{S}^* e^{-it \mathbf{H}^{(\mathbf{r})}_{U_0}\mathbf{H}^{(\mathbf{l})}_{U_0}}\big|_{\mathbb{V}}  : \mathbb{V} \to \mathbb{V} 
\end{equation}by \eqref{W*S*WHH}. Since $U_0 =\mathbf{H}^{(\mathbf{r})}_{U_0} (\mathbb{I}_N) \in \mathbb{V}$, thanks to the invariant-subspace-property \eqref{ImHuInv}, we have
\begin{equation}\label{1-zW*S*WMSze}
 (\mathrm{id}  - z \mathbf{W}(t)^* \mathbf{S}^*\mathbf{W}(t) )^{-1}(U_0) =  ( \mathrm{id}  - z e^{it \mathbf{K}^{(\mathbf{r})}_{U_0}\mathbf{K}^{(\mathbf{l})}_{U_0}} \mathbf{S}^* e^{-it \mathbf{H}^{(\mathbf{r})}_{U_0}\mathbf{H}^{(\mathbf{l})}_{U_0}})^{-1}(U_0) \in \mathbb{V},
\end{equation}$\forall z \in D(0,1)$. Then \eqref{1-zW*S*WMSze}, \eqref{W*PQSze} and \eqref{W*U} imply that
\begin{equation}\label{<>kjmsze}
\begin{split}
& \langle (\mathrm{id} -z \mathbf{S}^*)^{-1} U(t), \mathbb{E}^{(MN)}_{kj}\rangle_{L^2_+} = \langle (\mathrm{id} -z \mathbf{W}(t)^* \mathbf{S}^* \mathbf{W}(t))^{-1} \mathbf{W}(t)^*U(t), \mathbf{W}(t)^* \mathbb{E}^{(MN)}_{kj}\rangle_{L^2_+} \\
= &  \langle (\mathrm{id}  - z e^{-it \mathbf{H}^{(\mathbf{r})}_{U_0}\mathbf{H}^{(\mathbf{l})}_{U_0}}e^{it \mathbf{K}^{(\mathbf{r})}_{U_0}\mathbf{K}^{(\mathbf{l})}_{U_0}} \mathbf{S}^* )^{-1}  e^{-it \mathbf{H}^{(\mathbf{r})}_{U_0}\mathbf{H}^{(\mathbf{l})}_{U_0}} (U_0) , \mathbb{E}^{(MN)}_{kj}\rangle_{L^2_+(\mathbb{T}; \mathbb{C}^{M \times N})}.
\end{split}
\end{equation}The Poisson integral of $U(t)=\sum_{n \geq 0}  \hat{U}(t, n) \mathbf{e}_n \in \mathcal{M}_{\mathrm{FR}}^{M \times N}$ is given by
\begin{equation}\label{PoiUMsze}
\underline{U}(t, z) = \sum_{n \geq 0} z^n \hat{U}(t, n) = \sum_{k=1}^M \sum_{j=1}^N \langle (\mathrm{id} -z \mathbf{S}^*)^{-1} U(t), \mathbb{E}^{(MN)}_{kj}\rangle_{L^2_+(\mathbb{T}; \mathbb{C}^{M \times N})}\mathbb{E}^{(MN)}_{kj} \in \mathbb{C}^{M \times N}.
\end{equation}thanks to  \eqref{I<>formula} and \eqref{InvForTorus}. Plugging formula \eqref{<>kjmsze} into \eqref{PoiUMsze}, we deduce that
\begin{equation}\label{UtzMszerl}
 \underline{U}(t, z)   = \mathbf{I} \left((\mathrm{id}  - z e^{-it \mathbf{H}^{(\mathbf{r})}_{U_0}\mathbf{H}^{(\mathbf{l})}_{U_0}}e^{it \mathbf{K}^{(\mathbf{r})}_{U_0}\mathbf{K}^{(\mathbf{l})}_{U_0}} \mathbf{S}^* )^{-1}  e^{-it \mathbf{H}^{(\mathbf{r})}_{U_0}\mathbf{H}^{(\mathbf{l})}_{U_0}} (U_0)  \right).
\end{equation}by \eqref{I<>formula} again. Similarly, since $U_0 =\mathbf{H}^{(\mathbf{l})}_{U_0} (\mathbb{I}_M) \in \mathscr{V}:=\mathrm{Im}\mathbf{H}^{(\mathbf{l})}_{U_0} =  \mathrm{Im}\mathbf{H}^{(\mathbf{l})}_{U_0}\mathbf{H}^{(\mathbf{r})}_{U_0}$, which is an $\mathbf{R}$-dimensional subspace of $L^2_+(\mathbb{T};  \mathbb{C}^{M\times N})$ such that $\mathscr{W}(t)^* \mathbf{S}^*\mathscr{W}(t)\big|_{\mathscr{V}}    = e^{it \mathbf{K}^{(\mathbf{l})}_{U(0)}\mathbf{K}^{(\mathbf{r})}_{U(0)}} \mathbf{S}^* e^{-it \mathbf{H}^{(\mathbf{l})}_{U(0)}\mathbf{H}^{(\mathbf{r})}_{U(0)}}\big|_{\mathscr{V}}  :  \mathscr{V}  \to \mathscr{V}$ by \eqref{W*S*WHH}, then $(\mathrm{id}  - z \mathscr{W}(t)^* \mathbf{S}^*\mathscr{W}(t) )^{-1}(U_0) =  ( \mathrm{id}  - z e^{it \mathbf{K}^{(\mathbf{l})}_{U_0}\mathbf{K}^{(\mathbf{r})}_{U_0}} \mathbf{S}^* e^{-it \mathbf{H}^{(\mathbf{l})}_{U_0}\mathbf{H}^{(\mathbf{r})}_{U_0}})^{-1}(U_0) \in \mathscr{V}$. Following the previous steps, we  substitute $\mathscr{W}(t)$ for  $\mathbf{W}(t)$ in \eqref{<>kjmsze} and obtain that
\begin{equation}\label{UtzMszelr}
 \underline{U}(t, z)   = \mathbf{I} \left((\mathrm{id}  - z e^{-it \mathbf{H}^{(\mathbf{l})}_{U_0}\mathbf{H}^{(\mathbf{r})}_{U_0}}e^{it \mathbf{K}^{(\mathbf{l})}_{U_0}\mathbf{K}^{(\mathbf{r})}_{U_0}} \mathbf{S}^* )^{-1}  e^{-it \mathbf{H}^{(\mathbf{l})}_{U_0}\mathbf{H}^{(\mathbf{r})}_{U_0}} (U_0)  \right).
\end{equation}Expand $\underline{U}(t, z)$ in \eqref{UtzMszerl} and \eqref{UtzMszelr} into power series of $z \in D(0,1)$. Then \eqref{mszeExpFor} holds for $U_0  \in \mathcal{M}_{\mathrm{FR}}^{M \times N}$. \\

\noindent For general $U_0 \in H^{\frac{1}{2}}_+(\mathbb{T};  \mathbb{C}^{M\times N})$, it suffices to use the following approximation argument: the mappings $V \mapsto  (e^{-it  \mathbf{H}^{(\mathbf{r})}_{V }\mathbf{H}^{(\mathbf{l})}_{V }} e^{it  \mathbf{K}^{(\mathbf{r})}_{V }\mathbf{K}^{(\mathbf{l})}_{V }} \mathbf{S}^*)^n e^{-it  \mathbf{H}^{(\mathbf{r})}_{V }\mathbf{H}^{(\mathbf{l})}_{V }}  (V )$, $V \mapsto  (e^{-it  \mathbf{H}^{(\mathbf{l})}_{V }\mathbf{H}^{(\mathbf{r})}_{V }} e^{it  \mathbf{K}^{(\mathbf{l})}_{V }\mathbf{K}^{(\mathbf{r})}_{V }} \mathbf{S}^*)^n e^{-it  \mathbf{H}^{(\mathbf{l})}_{V }\mathbf{H}^{(\mathbf{r})}_{V }}  (V )$ and the flow map $U_0=U(0) \mapsto U(t)$ are all continuous from $H^{\frac{1}{2}}_+(\mathbb{T};  \mathbb{C}^{M\times N})$ to $L^2_+(\mathbb{T};  \mathbb{C}^{M\times N})$, $\forall (n,t) \in \mathbb{N} \times \mathbb{R}$, thanks to identity \eqref{TrHrHl} and Proposition $\ref{GWPH0.5}$. The proof is completed thanks to Lemma $\ref{denseFR}$, i.e. the density of $\mathcal{M}_{\mathrm{FR}}^{M \times N}$ in $H^{\frac{1}{2}}_+(\mathbb{T};  \mathbb{C}^{M\times N})$.
\end{proof} 
 
\begin{center}
$Acknowledgments$
\end{center}
On the one hand, the author would like to warmly thank Prof. Patrick G\'erard and Prof. Sandrine Grellier for all the lectures, presentations and mini-courses, which make the author very familiar to the cubic scalar Szeg\H{o} equation. On the other hand, the author is also grateful to Georgia Institute of Technology for both financially supporting the author's research and assigning the courses \textbf{Math1554} and \textbf{Math1553} about \textit{Linear Algebra} to the author, which makes the author more familiar to the matrix theory, leading to this matrix generalization of the cubic Szeg\H{o} equation.

\end{document}